\documentclass[11pt,a4paper]{article}
\usepackage[margin=25mm]{geometry}
\usepackage{amsmath,amssymb}
\usepackage{amsthm}
\usepackage{mathrsfs}
\usepackage{graphicx}
\usepackage{wrapfig}
\numberwithin{equation}{section}
\newtheorem{theorem}{Theorem}[section]
\newtheorem{lemma}[theorem]{Lemma}
\newtheorem{proposition}[theorem]{Proposition}
\newtheorem{step}{Step}[section]
\newtheorem{corollary}[theorem]{Corollary}
\newtheorem{assumption}[theorem]{Assumption}
\newtheorem{example}[theorem]{Example}
\newtheorem{app}{Appendix}[section]
\newtheorem{remark}{Remark}[section]
\begin{document}

\title{Global Well-Posedness for the Fourth-Order Nonlinear Schr\"{o}dinger Equation with Potential in the Energy-Critical Case}
\author{Hikaru Nakayama\thanks{This paper is based on the author's master's thesis completed at the Department of Mathematics, Graduate School of Science, Osaka University.}}
\date{}
\maketitle

\begin{abstract}
We consider the fourth-order nonlinear Schr\"{o}dinger equation with potential in the energy-critical case.
In this paper, we prove global well-posedness and scattering for radial initial data.
\end{abstract}

\tableofcontents

\newpage

\section{Introduction and Main Result}

\subsection{Introduction}

For $\lambda \in \mathbb{R}$, we consider the following fourth-order nonlinear Schr\"{o}dinger equation:
\begin{equation}\label{eq:NL4S}
  i\partial_{t}u + \Delta^{2}u + Vu + \lambda|u|^{p-1}u = 0,\ \ x \in \mathbb{R}^{n}, \ \ t \in \mathbb{R}.
\end{equation}
Here $\Delta = \sum_{j = 1}^{n} \partial_{j}^{2}$ and $\Delta^{2} = \Delta \circ \Delta$. Moreover, $V$ is a real-valued potential. Define $2^{\sharp} = 2n/(n-4)$. When $p = 2^{\sharp} - 1$, the equation is called energy-critical, and when $1 < p < 2^{\sharp}-1$, it is called energy-subcritical. Also, when $\lambda > 0$, the equation is called defocusing. Throughout this paper, $\langle \cdot, \cdot \rangle$ denotes the $L^{2}$ inner product, and $\langle  \cdot \rangle = (1 + |\cdot|^{2})^{1/2}$. We say that $u \in C(I, H^{2})$ is a solution to \eqref{eq:NL4S} with initial data $u_{0} = u(0)$ if it satisfies the integral equation
\[
 u(t) = e^{itH}u_{0} - i\int_{0}^{t}e^{i(t-s)H}|u|^{p-1}u(s)\, ds,
\]
where $H = \Delta^{2} + V$.

Pausader \cite{Pau} proved global well-posedness and scattering for radial initial data in the energy-critical case for the fourth-order nonlinear Schr\"{o}dinger equation obtained from \eqref{eq:NL4S} by taking $V = 0$. In this paper, using the method of \cite{Pau}, we obtain an analogous result for the fourth-order nonlinear Schr\"{o}dinger equation with potential \eqref{eq:NL4S}. In \cite{Pau}, Strichartz estimates play an essential role in the proof. Therefore, in the present paper we need Strichartz estimates with potential, which were established in \cite{Miz}. However, when a potential is present, the propagator $e^{it(\Delta^{2} + V)}$ cannot in general be represented by the Fourier transform. Thus, in order to obtain Strichartz estimates in Sobolev norms, we need the equivalence of Sobolev norms
\[
\|(\Delta^{2} + V)^{s/4}u\|_{L^{p}} \sim \||\nabla|^{s}u\|_{L^{p}}.
\]
This is a major difference from the case $V = 0$. We discuss this equivalence in detail in Section 3, and by using it we derive Strichartz estimates in Sobolev norms in Section 4. In addition, \cite{Pau} uses $L^{p}$-$L^{q}$ decay estimates; in order to obtain their counterparts in the presence of a potential, we use the boundedness of wave operators, which is also treated in Section 4. Furthermore, in Section 8 we derive a Morawetz-type estimate with potential, which is needed to prove the existence of global solutions. Then, in Section 9, we prove global existence. Moreover, for nonlinear Schr\"{o}dinger equations with potential, besides the question of whether a solution $u$ asymptotically approaches a linear solution $e^{it(\Delta^{2} + V)}u^{\pm}$, it is also important to ask whether it asymptotically approaches a free linear solution $e^{it\Delta^{2}}u^{\pm}$ corresponding to the case $V = 0$. This is discussed in Section 10 on scattering.

\subsection{Main result}

In order to state the main theorems of this paper, we impose the following assumptions.

\begin{assumption}\label{as:wave}
Let $V$ be a real-valued potential, and let $\mathcal{F}$ denote the Fourier transform. Assume that $n \geq 5$.

\begin{enumerate}
  \item $|V(x)| \lesssim \langle x \rangle^{-\beta}$ holds for some $\beta > n + 3$ when $n$ is odd, and for some $\beta > n + 4$ when $n$ is even.
  \item When $n = 7$, there exists $\varepsilon > 0$ such that $\|\langle \cdot \rangle^{1 + \varepsilon} V(\cdot)\|_{H^{\varepsilon}} < \infty$.
  \item There exist $0 < \delta \ll 1$ and $\sigma > \frac{2n - 8}{n - 1 - \delta}$ such that
  \[
  \|\mathcal{F}(\langle \cdot \rangle^{\sigma}V(\cdot))\|_{L^{\frac{n - 1 - \delta}{n - 4 - \delta}}} < \infty.
  \]
  \item $H = \Delta^{2} + V$ has no positive eigenvalues, and zero energy is regular.
\end{enumerate}
Here, ``zero energy is regular'' means that there is no nontrivial solution to $H\psi = 0$ satisfying $\langle x \rangle^{\frac{n}{2} - 4 - \varepsilon}\psi(x) \in L^{2}$.
\end{assumption}

\begin{assumption}\label{as:mora}
Let $V$ be a $C^{1}$ real-valued potential.
\begin{enumerate}
\item $x \cdot \nabla V \leq 0$.
\item $|\partial^{\alpha}_{x}V(x)| \leq C_{\alpha}\langle x \rangle^{-\beta-|\alpha|}$ for $|\alpha| = 0 , 1$ and some $\beta \geq 4$.
\end{enumerate}
\end{assumption}

\begin{assumption}\label{as:equ}
Let $V$ be a real-valued potential.
\begin{enumerate}
\item $V \geq 0$.
\item $V \in L^{\infty} \cap L^{\frac{n}{4}}$.
\item There exists a small $\delta_{n} > 0$ such that $\|V\|_{L^{\frac{n}{4},\infty}} \leq \delta_{n}$.
\end{enumerate}
\end{assumption}

The following two theorems are the main results of this paper.

\begin{theorem}\label{th:global}
Let $n \geq 5$, $\lambda > 0$, and $p \in (1, \, 2^{\sharp} - 1]$. Let $V \in L^{\infty} \cap L^{\frac{n}{4}}$ be a radial real-valued potential satisfying Assumptions \ref{as:wave}, \ref{as:mora}, and \ref{as:equ}. Then, for any radial initial data $u_{0} \in H^{2}$, there exists a unique global solution $u\in C(\mathbb{R}, H^{2})$ to \eqref{eq:NL4S} satisfying $u(0) = u_{0}$.
\end{theorem}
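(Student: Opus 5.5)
The plan follows Pausader's concentration-compactness/rigidity scheme, adapted to $H=\Delta^{2}+V$. The subcritical range $1<p<2^{\sharp}-1$ is the easier part, so I treat it first. I begin with local well-posedness in $H^{2}$, obtained by contraction in Strichartz spaces adapted to $H$. The key inputs are the Strichartz estimates of \cite{Miz} and the norm equivalence $\|H^{1/2}u\|_{L^{q}}\sim\|\Delta u\|_{L^{q}}$ from Assumption \ref{as:equ}, which together give Strichartz bounds at the $H^{2}$ level. In the subcritical case the local existence time depends only on $\|u_{0}\|_{H^{2}}$. The solution conserves mass and the energy
\[
E(u)=\tfrac12\|\Delta u\|_{L^{2}}^{2}+\tfrac12\int V|u|^{2}\,dx+\tfrac{\lambda}{p+1}\int|u|^{p+1}\,dx .
\]
Since $V\ge0$ and $\lambda>0$, mass and energy together control $\|u(t)\|_{H^{2}}$ uniformly in time. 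Iterating the local theory then gives a global solution. Uniqueness holds in $C(I,H^{2})$ because $H^{2}$-valued solutions automatically lie in the Strichartz spaces on short intervals, via the Sobolev embedding and the nonlinear estimate.

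For $p=2^{\sharp}-1$ the local time depends on the profile of $u_{0}$, not only on its norm. So I prove the stronger statement that every radial solution has finite global scattering norm $\|u\|_{Z(\mathbb{R})}$, say $Z=L^{\frac{2(n+4)}{n-4}}_{t,x}$. The steps are as follows.
\begin{enumerate}
\item Build a critical local theory with blow-up alternative and small-data scattering. Then prove a long-time stability (perturbation) lemma in Strichartz norms, again relying on the Sobolev equivalence and on Mizutani's estimates.
\item Suppose the claim fails. Then there is a minimal energy $E_{c}<\infty$ at which the scattering norm becomes infinite.
\item Perform a radial linear profile decomposition for $e^{itH}$. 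Profiles that concentrate at scales $h_{n}\to0$ feel no potential in the limit, because $V\in L^{n/4}\cap L^{\infty}$ and the rescaled potential $h_{n}^{4}V(h_{n}\cdot)\to0$ in the relevant norms. Those profiles are approximated by solutions of the $V=0$ equation, which scatter by Pausader's theorem \cite{Pau}. Profiles at scale $h_{n}\to\infty$ disperse; the $L^{p}$--$L^{q}$ decay obtained via boundedness of wave operators (Assumption \ref{as:wave}) shows their contribution is negligible. Radiality removes spatial translations.
\item Conclude from stability and minimality that there is a single profile at unit scale. This yields a critical element whose orbit is precompact in $H^{2}$.
\end{enumerate}

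I then rule out the critical element with the Morawetz-type estimate of Section 8. That estimate needs $x\cdot\nabla V\le0$ and the decay of Assumption \ref{as:mora}, which give a favourable sign for the potential term. It yields a space-time bound of the form $\int_{I}\int|u|^{p+1}/|x|\,dx\,dt\lesssim\sup_{t}\|u(t)\|_{H^{2}}^{2}$ (or a localized variant). Compactness of the orbit gives a uniform lower bound on $\int_{|x|\le R}|u|^{p+1}$ for all times. Comparing this lower bound over a long interval $I$ with the Morawetz upper bound gives a contradiction unless $u\equiv0$.

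The step I expect to be the main obstacle is the profile decomposition together with the treatment of concentrating profiles, because the propagator $e^{itH}$ does not commute with scaling. I would first prove strong convergence $e^{itH_{n}}\phi\to e^{it\Delta^{2}}\phi$, where $H_{n}$ is the rescaled operator, in Strichartz norms. This would use a Duhamel expansion in $V$ and the smallness of $\|V\|_{L^{n/4,\infty}}$. I would then transfer the nonlinear profiles through the stability lemma.
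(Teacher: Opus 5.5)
Your subcritical argument is the same as the paper's: for $p<2^{\sharp}-1$, mass and energy (with $V\ge0$, $\lambda>0$) bound $\|u(t)\|_{H^{2}}$, and the local theory is iterated.

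\textbf{Critical case: a different route.} The paper never constructs a critical element. It follows Pausader's radial argument, which adapts Bourgain--Tao. It splits $[t_-,t_+]$ into intervals with $\frac14\eta<\|u\|_{W(I_j)}<\eta$ and bounds the number of exceptional intervals by Strichartz. On an unexceptional interval it uses the $L^p$--$L^q$ decay (from boundedness of $W_\pm$) to show that the ``past'' Duhamel term $v$ is nearly constant at scale $\eta^{n+5}|I|^{1/4}$. This produces a bubble of localized mass. Almost conservation of localized mass spreads the bubble over $I$, and radiality plus the energy bound pin it near the origin. The localized Morawetz estimate \eqref{Morawetz} then forces one long interval in every cluster, and the Killip--Visan--Zhang lemma with disjoint annuli bounds the number of intervals. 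The result is $\|u\|_{Z}\le K(\mathcal{E})$ directly. This needs no profile decomposition for $e^{itH}$ and no control of how $e^{itH}$ interacts with rescaling, and it does not use the $V=0$ theorem as a black box.

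Your Kenig--Merle scheme needs all of that machinery. That includes a profile decomposition (e.g.\ transferred from the free one via $e^{itH}=W_{\pm}e^{it\Delta^{2}}W_{\pm}^{*}$), convergence of rescaled linear and nonlinear flows, and nonlinear profiles with $t_n\to\pm\infty$. In return, rigidity is short: a uniform lower bound $\int_{|x|\le R}|u(t)|^{2^{\sharp}}\ge c$ gives a left side $\gtrsim |I|/R$ in \eqref{Morawetz}, against a right side $\lesssim |I|^{3/4}$.

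\textbf{The step that fails as written: large-scale profiles.} Profiles with $h_n\to\infty$ do not ``disperse.'' Write $V_h=h^{4}V(h\,\cdot)$ and $H_n=\Delta^{2}+V_{h_n}$. By scaling, $\|e^{itH}h_n^{-\frac{n-4}{2}}\phi(\cdot/h_n)\|_{W}=\|e^{itH_n}\phi\|_{W}$, and this tends to $\|e^{it\Delta^{2}}\phi\|_{W}\neq0$. No decay estimate can make a scale-invariant norm small. Because you induct on energy alone, the minimizing sequence may have unbounded mass, so these profiles really occur. They must be handled like the concentrating ones, by comparison with Pausader's $V=0$ solutions.

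The vanishing of the potential happens in different norms in the two limits. We have $\|V_h\|_{L^{q}}=h^{4-n/q}\|V\|_{L^{q}}$, which tends to zero for $q>n/4$ as $h\to0$ and for $q<n/4$ as $h\to\infty$. By contrast, $\|V_h\|_{L^{n/4}}$ and $\|V_h\|_{L^{n/4,\infty}}$ are scale-invariant. So smallness of $\|V\|_{L^{n/4,\infty}}$ only lets you absorb the Duhamel term in $V$; it does not give convergence. Convergence of $e^{itH_n}\phi$ has to come from pairing the vanishing non-critical norm with the dispersive decay of $e^{it\Delta^{2}}\phi$ for nice $\phi$. At the $\dot H^{2}$ level, note that the Sobolev equivalence holds for $V_h$ with the same constants, by scaling.

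Alternatively, induct with a mass bound. Then $\|h_n^{\frac{n-4}{2}}u_n(h_n\cdot)\|_{L^{2}}=h_n^{-2}\|u_n\|_{L^{2}}\to0$, which excludes $h_n\to\infty$ profiles.

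\textbf{Consequences of energy-only induction.} The critical element then lies in $\dot H^{2}$ and need not be in $L^{2}$. You therefore need a $\dot H^{2}$ local theory, since Proposition~\ref{local existence} contracts in the mass-level norm $L^{\frac{2(n+4)}{n}}_{t,x}$. For rigidity you must use the localized bound \eqref{Morawetz}, not a global Morawetz estimate controlled by $\sup_t\|u(t)\|_{H^{2}}^{2}$.
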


\begin{theorem}\label{th:scattering}
Let $n \geq 5$, and let $V$ be a radial real-valued potential satisfying Assumptions \ref{as:wave}, \ref{as:mora}, and \ref{as:equ}. Let $u \in C(\mathbb{R}, H^{2})$ be any radial solution to \eqref{eq:NL4S} with $p = 2^{\sharp}-1$ and $\lambda > 0$. Then there exist radial functions $u^{\pm} \in H^{2}$ such that
\[
 \|u(t) - e^{it\Delta^{2}}u^{\pm} \|_{H^{2}}\rightarrow 0 ,\ \ t \rightarrow \pm \infty.
\]
\end{theorem}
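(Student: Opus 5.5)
The proof of Theorem \ref{th:scattering} will proceed in two stages. First we show that every radial solution has finite global Strichartz norms. Then we upgrade this to scattering, first to the perturbed flow and then to the free flow.

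\textbf{Stage 1: a global spacetime bound.} Let $u$ be the global solution given by Theorem \ref{th:global} with $p = 2^{\sharp}-1$. Following Pausader's induction-on-energy scheme, we aim for a bound
\[
\|u\|_{L^{\frac{2(n+4)}{n-4}}_{t,x}(\mathbb{R}\times\mathbb{R}^n)} \le C(E(u_0)),
\]
or equivalently a bound on $\|\Delta u\|_{L^{2}_tL^{\frac{2n}{n-2}}_x}$. The ingredients are those developed earlier in the paper:
\begin{enumerate}
\item Strichartz estimates for $e^{itH}$ in Sobolev norms, which rely on the equivalence $\|H^{s/4}u\|_{L^p}\sim\||\nabla|^s u\|_{L^p}$ under Assumption \ref{as:equ}.
\item The $L^p$--$L^q$ dispersive estimates obtained from the boundedness of the wave operators under Assumption \ref{as:wave}.
\item The Morawetz-type estimate of Section 8, which uses $x\cdot\nabla V\le 0$ and radiality.
\end{enumerate}
The Morawetz estimate controls the solution away from the origin. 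Combined with radial Sobolev decay, it rules out energy concentration except near $x=0$. The mass/energy-concentration argument of \cite{Pau} then prevents concentration on the intervals where the Strichartz norm is large, and this bounds the number of such intervals.

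\textbf{Stage 2: scattering for $e^{itH}$.} Once $\|\Delta u\|_{S(\mathbb{R})}<\infty$, we split $\mathbb{R}$ into finitely many intervals on which the norm is small. The standard Strichartz estimate then gives that $e^{-itH}u(t)$ is Cauchy in $H^2$ as $t\to\pm\infty$, since
\[
\Big\|\int_{t_1}^{t_2} e^{-isH}|u|^{p-1}u\,ds\Big\|_{H^2}\lesssim \|\Delta u\|_{S(t_1,t_2)}\|u\|^{p-1}_{L^{\frac{2(n+4)}{n-4}}(t_1,t_2)}\to0 .
\]
Here we use the norm equivalence to pass between $H^{1/2}$ and $\Delta$. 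This produces $v^\pm\in H^2$ with $\|u(t)-e^{itH}v^\pm\|_{H^2}\to0$.

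\textbf{Stage 3: passage to the free flow.} We set $u^\pm=W_\pm^{*}v^\pm$, where $W_\pm=\mathrm{s\text{-}lim}\,e^{-itH}e^{-it\Delta^2}$ are the wave operators. By Assumption \ref{as:wave}, which excludes eigenvalues and zero resonances, $H$ is purely absolutely continuous. Hence the wave operators are complete and unitary on $L^2$, and they are bounded on $H^2$ (indeed on $W^{s,p}$) by the wave-operator boundedness of Section 4. This gives $e^{itH}v^\pm-e^{it\Delta^2}u^\pm\to0$ in $L^2$. To upgrade to $H^2$, we use $\|H^{1/2}f\|_{L^2}\sim\|\Delta f\|_{L^2}$ together with the intertwining relation $HW_\pm=W_\pm\Delta^2$. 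Radiality of $u^\pm$ follows because $V$ is radial, so $W_\pm$ commutes with rotations.

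\textbf{Main obstacle.} The hard step is Stage 1, specifically making the Morawetz estimate and the localized energy arguments compatible with the potential. The commutator terms involving $V$ and $\nabla V$ must be controlled using the decay $\langle x\rangle^{-\beta-1}$ and the sign condition. For that step we would first try to mimic Pausader's radial Morawetz estimate and absorb the $V$-terms as harmless, positive-sign contributions.
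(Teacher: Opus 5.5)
Your Stages 1 and 2 match the paper. The global $Z$-bound is Proposition~\ref{prepare main thm}, applied on $[0,T]$ with a constant independent of $T$, together with Proposition~\ref{criterion}. The Cauchy property of $e^{-itH}u(t)$ in $H^2$ comes from \eqref{strichartz for scattering} for the $\dot H^2$ part and \eqref{standard} for the $L^2$ part.

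Your heuristic for Stage 1 has the roles reversed, and it is not an induction on energy. In the paper, radiality (the disjoint-balls count in Section 9) forces concentration near the origin. The Morawetz estimate \eqref{Morawetz}, whose weight $1/|x|$ lives on $|x|\le K|I|^{1/4}$, then limits concentration there. Since this is already proved, it is not the real obstacle for this theorem.

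Stage 3 is a genuinely different route. The paper proves that the inverse wave operators exist directly on $H^2$ (Proposition~\ref{scattering-1}). It combines the $L^2$ existence from Kuroda (Remark~\ref{remark:scattering}), the compactness of $[\psi(H_0)-\psi(H)]\varphi(H_0)$ from $L^2$ to $H^2$ (Lemma~\ref{lemma:scattering}, via Helffer--Sj\"ostrand), and $e^{itH_0}f\rightharpoonup 0$. You instead use unitarity and intertwining of the $L^2$ wave operator. That is more elementary, and the inputs it needs, $P_{ac}(H)=I$ and completeness, are available here.

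The gap is the upgrade to $H^2$. The comparison $\|H^{1/2}f\|_{L^2}\sim\|\Delta f\|_{L^2}$ plus $H^{1/2}\Omega=\Omega(-\Delta)$ only shows that $u^+\mapsto e^{itH}\Omega u^+-e^{it\Delta^2}u^+$ is bounded on $H^2$ uniformly in $t$. Uniform boundedness plus $L^2$ convergence does not give $H^2$ convergence.

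Concretely, set $a(t)=e^{itH}v^+$, $b(t)=e^{it\Delta^2}u^+$ and $v^+=\Omega u^+$. Intertwining gives $H^{1/2}a(t)-(-\Delta)b(t)\to0$ in $L^2$. What remains is $(H^{1/2}+\Delta)a(t)\to0$, i.e., that the two Sobolev structures agree asymptotically along dispersing waves. That is exactly what the paper's compactness lemma supplies. You can close it inside your framework in either of two ways.

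(i) For $u^+\in H^4$, the bound $\|\langle H\rangle f\|_{L^2}\sim\|f\|_{H^4}$ (valid since $V\in L^\infty$) and intertwining make $a-b$ bounded in $H^4$. Then $\|f\|_{H^2}\le\|f\|_{L^2}^{1/2}\|f\|_{H^4}^{1/2}$ turns $L^2$ convergence into $H^2$ convergence, and you conclude by density and the uniform $H^2$ bound.

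(ii) Since $\Omega$ is an isometry and $V\ge0$, you have $\|\Delta a(t)\|_{L^2}^2=\|\Delta u^+\|_{L^2}^2-\langle Va(t),a(t)\rangle\le\|\Delta b(t)\|_{L^2}^2$. Hence
\[
\|\Delta(a-b)(t)\|_{L^2}^2\le-2\,\mathrm{Re}\bigl\langle\Delta\bigl(e^{-it\Delta^2}a(t)-u^+\bigr),\Delta u^+\bigr\rangle\to0 .
\]
The limit holds because $e^{-it\Delta^2}a(t)-u^+\to0$ in $L^2$ while staying bounded in $\dot H^2$, so its Laplacian converges weakly to $0$.

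Smaller points:
\begin{itemize}
\item Your wave operators should be $\mathrm{s\text{-}lim}_{t\to\pm\infty}e^{-itH}e^{it\Delta^2}$, the paper's $W_\mp$, since the flows here are $e^{itH}$ and $e^{it\Delta^2}$.
\item Completeness is a separate short-range theorem (Kuroda), not a consequence of pure absolute continuity.
\item The absence of negative eigenvalues uses $V\ge0$ from Assumption~\ref{as:equ}, not Assumption~\ref{as:wave}.
\item The $H^2$-boundedness of $\Omega$ comes from intertwining and the $L^2$ norm comparison, not from the $L^p$ bounds of Section 4.
\end{itemize}
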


These results extend the result of \cite{Pau}, where \eqref{eq:NL4S} was studied in the case $V = 0$. Assumption \ref{as:wave} is used to derive the $L^{p}$-$L^{q}$ decay estimates introduced in Section 4, Assumption \ref{as:mora} is used to derive the Morawetz-type estimate introduced in Section 8, and Assumption \ref{as:equ} is used to establish the equivalence of Sobolev norms
\[
\|H^{s/4}u\|_{L^{p}} \sim \||\nabla|^{s}u\|_{L^{p}},
\]
which is discussed in Section 3.

The following gives an example of a potential satisfying the assumptions of the main theorems.

\begin{example}
Let $\beta > n + 4$ and let $0 < c \ll 1$ be sufficiently small. Define
\[
 V(x) = c\langle x \rangle^{-\beta}.
\]
Then it is easy to see that $V$ is radial and satisfies Assumptions \ref{as:mora} and \ref{as:equ}. It is also immediate that Assumption \ref{as:wave}(1) and (2) hold. For Assumption \ref{as:wave}(3), by the Hausdorff--Young inequality, using a sufficiently small constant $0 < \varepsilon \ll 1$, we obtain
\[
 \|\mathcal{F}\left( \langle x \rangle^{\frac{2n-8}{n-1} -\beta + \varepsilon} \right)\|_{L^{1 + \frac{3}{n-4} + \varepsilon}} \lesssim \|\langle x \rangle^{\frac{2n-8}{n-1} -\beta + \varepsilon}\|_{L^{(1 + \frac{3}{n-4})^{\prime} - \varepsilon}} < \infty,
\]
so Assumption \ref{as:wave}(3) is satisfied. Assumption \ref{as:wave}(4) follows from Remark \ref{remark:st:wave:as}.
\end{example}

\subsection{Related background}
For the second-order nonlinear Schr\"{o}dinger equation
\[
  i\partial_{t}u + \Delta u + \lambda|u|^{p-1}u = 0,\ \ x \in \mathbb{R}^{n}, \ \ t \in \mathbb{R},
\]
there are many results in the energy-critical case $p = \frac{n + 2}{n-2}$ and the defocusing case, namely $\lambda < 0$. Historically, Bourgain \cite{Bourgain} first proved global well-posedness for radial solutions when $n = 3,4$, and then Tao \cite{Tao-2} generalized Bourgain's result to arbitrary $n \geq 3$. Furthermore, J. Colliander, M. Keel, G. Staffilani, H. Takaoka, and T. Tao \cite{Coll} obtained global well-posedness for general initial data when $n = 3$, and finally Ryckman and Visan \cite{Ryckman}, \cite{visan} extended this to $n \geq 4$. Thus, the study of global well-posedness for the usual second-order nonlinear Schr\"{o}dinger equation has the above background.

The fourth-order nonlinear Schr\"{o}dinger equation was introduced by Karpman \cite{Karpman} and by Karpman and Shagalov \cite{Karpman-2} in order to describe fourth-order dispersion effects for intense laser beams propagating through a medium with Kerr nonlinearity. Subsequently, well-posedness for fourth-order nonlinear Schr\"{o}dinger equations was studied by Pausader \cite{Pau}, \cite{Pau-2}, and by C. Hao, L. Hsiao, and B. Wang \cite{Hao-1}, \cite{Hao-2}, among others. In particular, global well-posedness and scattering in the energy-critical case were investigated by Pausader \cite{Pau}, \cite{Pau-3}, and by Miao, Xu, and Zhao \cite{Miao}, \cite{Miao-2}. More recently, Miao and Zheng \cite{Miao-3}, as well as Dinh \cite{Dinh-1}, \cite{Dinh-2}, studied the $\dot{H}^{s_{c}}$-critical problem and obtained results on scattering theory. Here,
\[
 s_{c} = \frac{n}{2} - \frac{4}{p - 1}.
\]

As for previous work on the fourth-order nonlinear Schr\"{o}dinger equation with potential, namely the case $V \neq 0$ in \eqref{eq:NL4S}, Feng, Wang, and Yao \cite{Feng} proved global well-posedness and scattering in the energy-subcritical case.

\section{Notations}
Let $L^{q} = L^{q}(\mathbb{R}^{n})$ denote the usual Lebesgue space, and let $L^{r}(I, L^{q})$ denote the space of measurable functions from an interval $I \subset \mathbb{R}$ to $L^{q}$ such that the norm
\[
 \|u\|_{L^{r}(I, L^{q})} = \left( \int_{I} \|u(t)\|_{L^{q}}^{r}dt \right)^{1/r}
\]
is finite.

Next we define generalized Sobolev spaces. For $s\in \mathbb{R}$ and $1 \leq q \leq \infty$, define
\[
 H^{s, q} := \left\{u \in \mathcal{S}^{\prime} :  \|u\|_{H^{s, q}} := \|\langle \Lambda \rangle^{s} u\|_{L^{q}} < \infty \right\},\ \ \Lambda = \sqrt{-\Delta}.
\]
Here $\mathcal{S}^{\prime}$ denotes the space of tempered distributions. In particular, we write $H^{s, 2} = H^{s}$. We also denote by $L^{r, \infty}\ (1 \leq r < \infty)$ the weak-$L^{r}$ space, defined by
\[
 L^{r, \infty} = \left\{ f: \|f\|_{L^{r, \infty}} = \sup_{\gamma > 0}\gamma \left|\{x \in \mathbb{R}^{n} ; |f(x)| > \gamma \}^{1/\gamma} \right| < \infty \right\}.
\]
For later use, we recall the weak-type H\"{o}lder inequality. If $\dfrac{1}{p} = \dfrac{1}{q} + \dfrac{1}{r}$ with $1 < p, q , r < \infty$, and if $f\in L^{r, \infty}$ and $g \in L^{q}$, then
\begin{equation}\label{weak holder}
\|fg\|_{L^{p}} \lesssim_{p,q,r} \|f\|_{L^{r, \infty}}\|g\|_{L^{q}}.
\end{equation}

Here $ A\lesssim_{u}B$ means that there exists a constant $C > 0$, depending only on $u$, such that $A \leq CB$. Next we define two important conserved quantities, the mass $M(u)$ and the energy $E(u)$:
\begin{equation}\label{eq:mass}
M(u) = \int_{\mathbb{R}^{n}} |u(x)|^{2} dx
\end{equation}

\begin{equation}\label{eq:energy}
E(u) = \frac{1}{2}\int_{\mathbb{R}^{n}} \left(|\Delta u(x)|^{2} + V(x)|u(x)|^{2} + \frac{2\lambda}{p + 1}  |u(x)|^{p + 1}\right)\, dx
\end{equation}

We introduce the norms used in the following arguments:
\begin{eqnarray*}
\|u\|_{M(I)} &=& \|\Delta u\|_{L^{\frac{2(n + 4)}{n - 4} }(I, L^{\frac{2n(n+4)}{n^{2} + 16}})}\\
\|u\|_{W(I)} &=& \|\nabla u\|_{L^{\frac{2(n + 4)}{n - 4} }(I, L^{\frac{2n(n+4)}{n^{2} - 2n + 8}})} \\
\|u\|_{Z(I)} &=& \|u\|_{L^{\frac{2(n + 4)}{n - 4} }(I, L^{\frac{2(n + 4)}{n - 4}})} \\
\|u\|_{N(I)} &=& \|\nabla u\|_{L^{2}(I, L^{\frac{2n}{n + 2}})}.
\end{eqnarray*}

For $u\in H^{2}$, define the functional $\mathcal{E}$ by
\begin{equation}
\mathcal{E}(u) = \int_{\mathbb{R}^{n}} |\Delta u(x)|^{2} dx.
\end{equation}

Let $\mathbb{B}(X, Y)$ denote the space of bounded operators from $X$ to $Y$, and in particular write $\mathbb{B}(X, X) = \mathbb{B}(X)$. We denote the operator norm by $\|\cdot\|_{X \rightarrow Y} := \|\cdot\|_{\mathbb{B}(X, Y)}$. Also, if $H$ is a self-adjoint operator, then $\mathcal{H}_{ac}(H)$ denotes the absolutely continuous spectral subspace of $H$, and $P_{ac}(H)$ denotes the projection onto $\mathcal{H}_{ac}(H)$. We also write $\sigma(H)$ for the spectrum of $H$.

\section{Equivalence of Homogeneous Sobolev Norms on $L^{p}$}

\begin{assumption}\label{as:sobolev}
Let $q_{0} > 2$. Assume that there exist constants $a, b > 0$ such that
\[\|Vf\|_{L^{q_{0}}} \leq a\|\Delta^{2}f\|_{L^{q_{0}}} + b\|f\|_{L^{q_{0}}},\ \ f \in H^{4, q_{0}}. \]
\end{assumption}

By \cite{Deng-2}, the following $L^{p}$ boundedness of the Riesz transform is known.
\begin{proposition}\label{riesz}
Let $n \geq 5$ and set $H = \Delta^{2} + V$. Assume that $V$ satisfies Assumption \ref{as:sobolev} for some $2 < q_{0} < \frac{n}{2}$. Then there exists $\delta_{q_{0}} > 0$ such that if
\begin{equation}
\|V\|_{L^{\frac{n}{4}, \infty}} \leq \delta_{q_{0}},
\end{equation}
then the Riesz transform $\nabla^{2}H^{-1/2}$ is bounded on $L^{q}$ for every $\dfrac{nq_{0}^{\prime}}{n + 2q_{0}^{\prime}} < q < q_{0}$.
\end{proposition}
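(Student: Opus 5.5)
The proposition is quoted from \cite{Deng-2}; the plan below is how I would reconstruct it. The idea is to treat $H=\Delta^{2}+V$ as a small perturbation of $\Delta^{2}$ in the scale-invariant weak norm $L^{\frac n4,\infty}$. We then transfer the classical boundedness of $\nabla^{2}(\Delta^{2})^{-1/2}$, which is a composition of Riesz transforms and hence bounded on every $L^{q}$, $1<q<\infty$.

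\textbf{Step 1: resolvent identity.} For $\mu>0$ set $H_{0}=\Delta^{2}$ and write
\[
(H+\mu)^{-1}=(H_{0}+\mu)^{-1}-(H_{0}+\mu)^{-1}V(H+\mu)^{-1}.
\]
Insert this into the subordination formula
\[
H^{-1/2}=\pi^{-1}\int_{0}^{\infty}\mu^{-1/2}(H+\mu)^{-1}\,d\mu .
\]
This splits $\nabla^{2}H^{-1/2}$ as
\[
\nabla^{2}H_{0}^{-1/2}-\pi^{-1}\int_{0}^{\infty}\mu^{-1/2}\nabla^{2}(H_{0}+\mu)^{-1}V(H+\mu)^{-1}\,d\mu .
\]
The first term is bounded on all $L^{q}$.

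\textbf{Step 2: bound the perturbative term.} Factor $V=|V|^{1/2}\,\mathrm{sgn}(V)|V|^{1/2}$. Since $|V|^{1/2}\in L^{\frac n2,\infty}$ with small norm, the weak H\"older inequality \eqref{weak holder} combined with Sobolev embedding $\dot H^{2,r}\hookrightarrow L^{r^{*}}$ gives two mapping properties:
\begin{itemize}
\item $|V|^{1/2}$ maps $\dot H^{2,r}\to L^{r}$, with norm $\lesssim\|V\|_{L^{n/4,\infty}}^{1/2}$, in the allowed range of $r$;
\item dually, $|V|^{1/2}$ maps $L^{r}\to\dot H^{-2,r}$.
\end{itemize}
Write $T=\nabla^{2}H^{-1/2}$. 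The perturbative integral is then $\nabla^{2}H_{0}^{-1/2}$ composed with $H_{0}^{-1/2}V$-type operators, up to $\mu$-uniform multipliers $\mu^{a}(H_{0}+\mu)^{-b}$ handled by Mikhlin. This yields a fixed-point-type inequality
\[
\|Tf\|_{L^{q}}\le C\|f\|_{L^{q}}+C\delta\,\|Tf\|_{L^{q}}+\text{lower order},
\]
and smallness of $\delta_{q_{0}}$ closes it.

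\textbf{Step 3: use Assumption \ref{as:sobolev} for lower-order terms.} The relative bound of $V$ with respect to $\Delta^{2}$ on $L^{q_{0}}$ gives $L^{q_{0}}$ boundedness of $V(H+\mu)^{-1}$ uniformly in large $\mu$. Interpolating this with the $L^{2}$ theory, and then dualizing, controls the resolvents on $L^{q}$ for $q$ between $q_{0}'$ and $q_{0}$. The lower endpoint $\frac{nq_{0}'}{n+2q_{0}'}$ comes from one Sobolev step: $\nabla^{2}$ of $(H+\mu)^{-1}$ acting on $L^{q}$ lands in $\dot H^{2}$, which is then pulled down by one Sobolev exponent.

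\textbf{Main obstacle.} The hard part is making the $\mu$-integral converge absolutely with the correct weights near $\mu=0$ and $\mu=\infty$. Near $\mu=0$ the bounds must be scale-invariant, which is why smallness is imposed in $L^{n/4,\infty}$ and no large part of $V$ is allowed. I would first try splitting the $\mu$-integral dyadically and summing via Cotlar-type almost orthogonality. If that fails, I would instead derive Gaussian-type upper bounds for $e^{-tH}$ in the relevant range from the smallness, and apply a Blunck--Kunstmann / Auscher criterion for Riesz transforms.
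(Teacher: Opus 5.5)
The paper gives no proof of this proposition; it quotes it from \cite{Deng-2}. Your reconstruction therefore has to stand on its own, and as written it does not. The identities in Step 1 and the weak-H\"older/Sobolev mapping properties are correct, but the one estimate that carries the argument is asserted, not proved. The perturbative $\mu$-integral cannot be controlled by bounding the integrand in $L^{q}\to L^{q}$ norm, whatever $\mu$-uniform multipliers you peel off. To see this, take the admissible model potential $V=c|x|^{-4}$, which is dilation invariant with $\|V\|_{L^{\frac n4,\infty}}\sim c$. Conjugate by the $L^{q}$-isometric dilations $D_{\lambda}f=\lambda^{n/q}f(\lambda\,\cdot)$ with $\lambda=\mu^{1/4}$. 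This gives
\[
\bigl\|\mu^{-1/2}\nabla^{2}(H_{0}+\mu)^{-1}V(H+\mu)^{-1}\bigr\|_{L^{q}\to L^{q}}=\mu^{-1}\bigl\|\nabla^{2}(H_{0}+1)^{-1}V(H+1)^{-1}\bigr\|_{L^{q}\to L^{q}},
\]
so the integral of the norms diverges logarithmically at both $\mu=0$ and $\mu=\infty$. The difficulty is therefore not a choice of weights. Convergence comes only from cancellation in $\mu$, and exploiting that on $L^{q}$ requires square-function estimates (bounded $H^{\infty}$ calculus) for $H$ on $L^{q}$, which is as deep as the proposition itself. For the same reason the fixed-point inequality in Step 2 is unsupported. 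The perturbative term involves $(H+\mu)^{-1}$, not $T$, and re-expressing it through $T$ reproduces the same divergent integral. Also, for a scale-invariant $V$ there is no lower-order part to absorb anything.

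Neither of your fallbacks closes this gap.
\begin{itemize}
\item Cotlar--Stein almost orthogonality is a Hilbert-space tool and says nothing for $q\neq 2$. The case $q=2$ is already trivial, since $|\langle Vu,u\rangle|\lesssim\delta\|\Delta u\|_{L^{2}}^{2}$.
\item Kernel bounds cannot be derived ``from the smallness''. The potential $V=-c|x|^{-4}$ is small in $L^{\frac n4,\infty}$ and, for $n\ge 9$, satisfies Assumption \ref{as:sobolev} for $2<q_{0}<\frac n4$ by the $L^{q_{0}}$ Rellich inequality. Yet it is not in the Kato-type class behind the bounds of \cite{Deng-1}. Its zero-energy solutions behave like $|x|^{-\gamma}$ with $\gamma>0$ at the origin, so one should not expect $e^{-tH}$ to act on all $L^{p}$. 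This is why the conclusion is confined to a $q_{0}$-dependent window.
\end{itemize}

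Step 3 has two further holes. First, Assumption \ref{as:sobolev} gives no smallness of $a$, so it does not yield uniform $L^{q_{0}}$ bounds for $V(H+\mu)^{-1}$ via a Neumann series; such bounds presuppose the $L^{q_{0}}$ resolvent theory of $H$. Second, the endpoint $\frac{nq_{0}'}{n+2q_{0}'}$ is named but not derived.

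A complete argument needs non-perturbative $L^{q}$ input in the window $(q_{0}',q_{0})$:
\begin{itemize}
\item off-diagonal $L^{p}$--$L^{2}$ estimates for $e^{-tH}$ and $t^{1/2}\nabla^{2}e^{-tH}$, obtained from Assumption \ref{as:sobolev} and the weak-$L^{\frac n4}$ smallness;
\item a Blunck--Kunstmann/Auscher-type criterion fed by these estimates;
\item separate arguments for $2<q<q_{0}$ and for the Sobolev step below $q_{0}'$.
\end{itemize}
Your proposal mentions this route only as a contingency and does not show that any of these estimates follow from the hypotheses.
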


\begin{assumption}\label{as:kernel}
Let $V$ be a real-valued potential. Assume that for every $\varepsilon > 0$ there exists a constant $C_{\varepsilon} > 0$ such that
\[ \|Vf\|_{L^{1}} \leq \varepsilon \|\Delta^{2}f\|_{L^{1}} + C_{\varepsilon}\|f\|_{L^{1}}, \ \ f \in C_{0}^{\infty}(\mathbb{R}^{n}).\]
\end{assumption}

\begin{proposition}\label{prop:h equ}
Let $n \geq 5$ and $H = \Delta^{2} + V$. Assume that $V \in L^{\frac{n}{4}}$ and $V \geq 0$. Then the following two statements hold.
\begin{itemize}
\item If $V \in L^{\infty}$ and the weak $L^{\frac{n}{4}}$ norm of $V$ is sufficiently small, then for every $1 < p < \dfrac{n}{2}$ and $0 \leq s \leq 2$,
\begin{equation}\label{eq1}
  \||\nabla |^{s}f\|_{L^{p}} \lesssim \|H^{s/4}f\|_{L^{p}}.
\end{equation}
\item Assume that $V$ satisfies Assumption \ref{as:kernel}. Then for every $1 < p < \dfrac{n}{s}$ and $0 \leq s \leq 4$,
\begin{equation}\label{eq2}
  \|H^{s/4}f\|_{L^{p}} \lesssim \||\nabla |^{s}f\|_{L^{p}}.
\end{equation}
\end{itemize}
\end{proposition}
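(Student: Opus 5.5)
The two inequalities are proved separately; in both the endpoints are obtained first and the intermediate exponents $s$ follow by complex interpolation. Since $V\ge 0$, $H$ is a nonnegative self-adjoint operator, so $H^{it}$, $t\in\mathbb{R}$, is unitary on $L^2$. Moreover $e^{-tH}$ obeys Gaussian-type (biharmonic heat kernel) upper bounds; this is standard for $V \ge 0$ in $L^{n/4}$ or under Assumption \ref{as:kernel}, and I would simply cite it. By a spectral multiplier theorem for operators with such heat kernel bounds, $\|H^{it}\|_{L^p\to L^p}\lesssim \langle t\rangle^{n/2}$ for $1<p<\infty$. The same holds for $|\nabla|^{it}$ by Mikhlin's theorem. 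These polynomial bounds are exactly what Stein's analytic interpolation needs along vertical lines.

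\emph{First inequality.} The case $s=0$ is trivial. For $s=2$, write $|\nabla|^2 f=\Delta f$. The Riesz transform bound of Proposition \ref{riesz} gives $\|\nabla^2 H^{-1/2}g\|_{L^p}\lesssim\|g\|_{L^p}$, and setting $g=H^{1/2}f$ yields $\||\nabla|^2 f\|_{L^p}\lesssim \|H^{1/2}f\|_{L^p}$. To use the proposition I must check that $V\in L^\infty$ with small $L^{n/4,\infty}$ norm satisfies Assumption \ref{as:sobolev} for suitable $q_0$. This is immediate from $\|Vf\|_{L^{q_0}}\le\|V\|_{L^\infty}\|f\|_{L^{q_0}}$. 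Then let $q_0\uparrow n/2$, so that $q_0'$ decreases to $n/(n-2)$ and $nq_0'/(n+2q_0')$ decreases to $1$. This covers every $1<p<n/2$; the constants $\delta_{q_0}$ depend on $q_0$, so for each fixed $p$ I fix one admissible $q_0$. For $0<s<2$, consider the analytic family $T_z=|\nabla|^{2z}H^{-z/2}$ on the strip $0\le \operatorname{Re}z\le 1$. On $\operatorname{Re} z=0$ it is $|\nabla|^{2i\tau}H^{-i\tau/2}$, which is bounded on $L^p$ with polynomial growth in $\tau$. On $\operatorname{Re}z=1$ it is $|\nabla|^{2i\tau}\,(|\nabla|^2H^{-1/2})\,H^{-i\tau/2}$, again polynomially bounded. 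Stein interpolation at $z=s/2$ then gives \eqref{eq1}.

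\emph{Second inequality.} Now the endpoint is $s=4$ with $1<p<n/4$. Here $\|Hf\|_{L^p}\le\|\Delta^2 f\|_{L^p}+\|Vf\|_{L^p}$, and by Hölder together with Sobolev's inequality $\|f\|_{L^{np/(n-4p)}}\lesssim\||\nabla|^4f\|_{L^p}$ we get $\|Vf\|_{L^p}\le \|V\|_{L^{n/4}}\|f\|_{L^{np/(n-4p)}}\lesssim \||\nabla|^4 f\|_{L^p}$. Interpolating between $s=0$ and $s=4$ with $T_z=H^{z}|\nabla|^{-4z}$, exactly as above, gives the bound for $0\le s\le 4$, but only in the range $1<p<n/4$ coming from the $s=4$ endpoint.

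\emph{Main obstacle.} Interpolation does not recover the full stated range $p<n/s$. To reach it I would instead interpolate between $(s,p)=(0,p_0)$ and $(4,p_1)$ with different exponents. Since the condition $p<n/s$ is linear in $(s,1/p)$, every admissible pair lies on such a segment, and I would use Stein–Weiss interpolation of the family $T_z$ between $L^{p_0}$ and $L^{p_1}$. If that proves delicate, an alternative is to take $s=2$ as an intermediate endpoint: write $H^{1/2}f=(H^{1/2}(-\Delta)^{-1})(-\Delta f)$ and bound $H^{1/2}(-\Delta)^{-1}$ on $L^p$ for $p<n/2$ by a perturbation expansion in $V$. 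This is where Assumption \ref{as:kernel} (kernel/heat bounds) is genuinely needed, so it is the step I expect to be hardest.
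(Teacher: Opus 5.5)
Your proposal is correct and follows the paper's proof. For \eqref{eq1}, the paper takes the $s=2$ endpoint from the Riesz transform bound, obtains $L^p$-boundedness of $H^{iy}$ from heat kernel bounds (under Assumption \ref{as:kernel} with $V\ge 0$) and a spectral multiplier theorem, and interpolates the family $|\nabla|^{2z}H^{-z/2}$ by Stein's theorem; for \eqref{eq2}, it uses the H\"older--Sobolev endpoint $s=4$, $1<p<n/4$, and interpolates $H^{z}|\nabla|^{-4z}$. The range issue you flag is resolved exactly by your first suggestion, which is what the paper does implicitly: Stein's analytic interpolation with $L^{q}$, $1<q<\infty$, on $\mathrm{Re}\,z=0$ and $L^{p}$, $1<p<n/4$, on $\mathrm{Re}\,z=1$ already gives every $1<r<n/s$, so your perturbative fallback is unnecessary, as is your claim that Assumption \ref{as:kernel} is needed there (it enters only through the bounds on $H^{iy}$).
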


\begin{proof}
First we prove \eqref{eq1}. Since $V\in L^{\infty}$, Assumption \ref{as:sobolev} holds for any $2 < q_{0} < \frac{n}{2}$. Therefore, by applying Proposition \ref{riesz}, we obtain for $1 < p < \frac{n}{2}$ that
\begin{equation}\label{riesz:s=2}
  \||\nabla |^{2}f\|_{L^{p}} \lesssim \|H^{1/2}f\|_{L^{p}}.
\end{equation}
Hence it remains to prove the case $0 < s < 2$. For $x,y \in \mathbb{R}$, set
\[ T_{x + iy} := |\nabla|^{2x + iy}H^{-\frac{x}{2} + iy}. \]
Then we show the $L^{p}$ boundedness of $T_{1 + iy} = |\nabla|^{iy}|\nabla|^{2}H^{-\frac{1}{2}}H^{iy}$. By \eqref{riesz:s=2}, it suffices to prove the $L^{p}$ boundedness of $H^{iy}$. Since $V \in L^{\infty}$ by assumption, Assumption \ref{as:kernel} is satisfied. Hence, by \cite{Deng-1}, the semigroup $e^{-tH}$ extends to an analytic semigroup $e^{zH}$ on $L^{1}$ with angle $\frac{\pi}{2}$, and when $V \geq 0$ its kernel satisfies
\begin{equation}
|e^{-tH}(x, y)| \lesssim t^{-\frac{n}{4}} \exp \left \{ -c\dfrac{|x - y|^{\frac{4}{3}}}{t^{\frac{1}{3}}} \right\}.
\end{equation}
Therefore, by applying Blunck's abstract spectral multiplier theorem \cite{Blunk} to $H$, we obtain for $1 < p < \infty$ that
\[ \|H^{iy}\|_{L^{p}\rightarrow L^{p}} \leq C_{p}\langle y \rangle^{2}. \]
Hence the case $0 < s < 2$ follows from Stein's interpolation theorem \cite{Stein}.

Next we prove \eqref{eq2}. Set $H_{0} = \Delta^{2}$ and define
\[ T_{z} = H^{z}H^{-z}_{0},\ \ z = x + iy. \]
We already know that $H^{iy}$ and $H^{-iy}_{0}$ are bounded on $L^{q}$ for $1 < q < \infty$. Thus, taking $z = iy$, we have for $1 < q < \infty$,
\begin{equation}\label{iy}
\|T_{iy}\|_{L^{q} \rightarrow L^{q}} \lesssim \langle y \rangle^{\alpha}.
\end{equation}
Next take $z = 1 + iy$. For $1 < p < \frac{n}{4}$, by H\"{o}lder and Sobolev inequalities,
\begin{align*}
\|Hf\|_{L^{p}} &\leq \|H_{0}f\|_{L^{p}} + \|Vf\|_{L^{p}} \\
&\lesssim \|H_{0}f\|_{L^{p}} + \|V\|_{L^{\frac{n}{4}}} \|H_{0}f\|_{L^{p}} \\
&\lesssim \|H_{0}f\|_{L^{p}}.
\end{align*}
Therefore, for $1 < p < \frac{n}{4}$,
\begin{equation}\label{1+iy}
\|T_{1 + iy}\|_{L^{p} \rightarrow L^{p}} \lesssim \langle y \rangle^{\alpha}
\end{equation}
holds. Hence, by \eqref{iy}, \eqref{1+iy}, and Stein's interpolation theorem, for $\theta \in [0, 1]$,
\[ \|T_{\theta}\|_{L^{r} \rightarrow L^{r}} \lesssim 1 \]
holds, where $1 < r < \frac{n}{4\theta}$. Writing $s = 4\theta$, we obtain for $s \in [0, 4]$ and $1 < r < \frac{n}{s}$ that
\[ \|H^{s/4}f\|_{L^{r}} \lesssim \||\nabla |^{s}f\|_{L^{r}}. \]
\end{proof}

By Proposition \ref{prop:h equ}, under Assumption \ref{as:equ}, since $V \in L^{\infty}$ implies Assumption \ref{as:kernel}, we obtain the equivalence of Sobolev norms
\[\|H^{s/4}u\|_{L^{p}} \sim \||\nabla|^{s}u\|_{L^{p}} \]
for any $1 < p < \frac{n}{2}$.

The following proposition asserts the equivalence of inhomogeneous Sobolev norms and will be used in Section 10 on scattering. The difference from Proposition \ref{prop:h equ} is that Assumption \ref{as:equ}(1) and (3) are not needed.

\begin{proposition}\label{prop:inh:sobolev}
Let $1 < p < \infty$, assume $V \in L^{\infty}$, and let $E > 0$ be sufficiently large. Then for any $0 < s \leq 4$,
$(\Delta^{2} + E)^{s/4}(H + E)^{-s/4}$ and $(H + E)^{s/4}(\Delta^{2}+E)^{-s/4}$ are bounded on $L^{p}$.
\end{proposition}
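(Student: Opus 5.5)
The plan is to run the same Stein interpolation scheme as in the proof of \eqref{eq2} in Proposition \ref{prop:h equ}, with $H_{0}=\Delta^{2}$ replaced by $H_{0}+E$ and $H$ by $H+E$. The shift by $E$ is what removes the need for $V\geq 0$ and for smallness of $\|V\|_{L^{n/4,\infty}}$.

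First we check that $H+E$ is a nonnegative self-adjoint operator with good heat kernel bounds. Since $V\in L^{\infty}$, $H$ is bounded below by $-\|V\|_{L^{\infty}}$, so $H+E\geq E-\|V\|_{L^\infty}>0$ once $E$ is large. Assumption \ref{as:kernel} holds because $V\in L^\infty$, so by \cite{Deng-1} the semigroup $e^{-tH}$ has a kernel bound of the form
\[
|e^{-tH}(x,y)|\lesssim t^{-n/4}e^{\omega t}\exp\{-c|x-y|^{4/3}t^{-1/3}\}
\]
with some $\omega\geq 0$, which we need not take to be zero since $V$ may change sign. For $E>\omega$ the operator $e^{-t(H+E)}$ then has a kernel satisfying a Gaussian-type (order $4$) upper bound uniformly in $t>0$, with exponential decay in $t$. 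Blunck's spectral multiplier theorem \cite{Blunk} then gives
\[
\|(H+E)^{iy}\|_{L^{p}\to L^{p}}\lesssim \langle y\rangle^{\alpha},\qquad 1<p<\infty.
\]
For $(\Delta^{2}+E)^{iy}$ the same bound follows from the Mikhlin multiplier theorem.

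Next, we set $T_{z}=(H+E)^{z}(\Delta^{2}+E)^{-z}$ and $S_{z}=(\Delta^{2}+E)^{z}(H+E)^{-z}$ for $0\leq\operatorname{Re}z\leq1$. On $\operatorname{Re}z=0$ both are bounded on every $L^{p}$, $1<p<\infty$, with polynomial growth in $y$. On $\operatorname{Re}z=1$ we treat the two operators separately.

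For $T_{1+iy}$ we write $(H+E)(\Delta^{2}+E)^{-1}=I+V(\Delta^{2}+E)^{-1}$. This is bounded on all $L^{p}$, $1\le p\le\infty$, because $V\in L^\infty$ and $(\Delta^2+E)^{-1}$ has an integrable kernel. Hence $\|T_{1+iy}\|_{L^p\to L^p}\lesssim\langle y\rangle^{\alpha}$ for all $1<p<\infty$, with no restriction $p<n/4$; this is where inhomogeneity helps.

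For $S_{1+iy}$ we need $\|(\Delta^{2}+E)f\|_{L^p}\lesssim\|(H+E)f\|_{L^p}$. We use
\[
(\Delta^2+E)(H+E)^{-1}=I-V(H+E)^{-1},
\]
so it suffices that $(H+E)^{-1}$ is bounded on $L^p$. This follows from the Laplace transform $(H+E)^{-1}=\int_0^\infty e^{-t(H+E)}\,dt$ together with the kernel bound above, whose $L^1_x$-norm is $\lesssim e^{-(E-\omega)t}$. Alternatively, a Neumann series $(H+E)^{-1}=(\Delta^2+E)^{-1}\sum_k(-V(\Delta^2+E)^{-1})^k$ converges on every $L^p$ once $E$ is so large that $\|V\|_{L^\infty}\|(\Delta^2+E)^{-1}\|_{L^p\to L^p}\le \|V\|_{L^\infty}/E<1$. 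The Neumann series is the more elementary route and is the one I would use.

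Stein's interpolation theorem then gives boundedness of $T_{\theta}$ and $S_\theta$ on $L^p$ for $\theta=s/4\in(0,1]$, which is the claim.

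The main obstacle is justifying the imaginary-power bound for $(H+E)^{iy}$ when $V$ is not nonnegative. The kernel estimate of \cite{Deng-1} is stated for $V\ge0$, and we must check that for bounded $V$ it still holds with the factor $e^{\omega t}$, so that the shift by $E$ restores the hypotheses of Blunck's theorem. If this is troublesome, I would first try a perturbative route: expand $e^{-tH}$ by Duhamel's formula around $e^{-t\Delta^2}$, whose kernel has the order-$4$ bound. Each iterate gains a factor $t\|V\|_\infty$, and summing gives the bound with $\omega\sim\|V\|_\infty^{\,}$, up to a change of constants. A further technical point is the analyticity and admissible growth of $z\mapsto\langle T_zf,g\rangle$, which I would handle on a dense class as in Proposition \ref{prop:h equ}.
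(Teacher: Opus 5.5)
Your proposal is correct and follows the paper's proof. The $s=4$ endpoint comes from the same identities $(H+E)(\Delta^{2}+E)^{-1}=I+V(\Delta^{2}+E)^{-1}$ and $(\Delta^{2}+E)(H+E)^{-1}=I-V(H+E)^{-1}$, followed by Stein interpolation with imaginary powers. The paper takes from \cite{Deng-1} exactly the kernel bound with the factor $e^{\omega t}$ for bounded $V$ of either sign, so your ``main obstacle'' is already covered by that reference. Your observation that the shift $E>\omega$ is what makes Blunck's theorem applicable to $H+E$ makes explicit what the paper leaves implicit in ``exactly as in Proposition \ref{prop:h equ}''.

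The only deviation is your preferred Neumann series for $(H+E)^{-1}$ on $L^{p}$, in place of the paper's Laplace-transform argument. It works, but the justified bound is $\|(\Delta^{2}+E)^{-1}\|_{L^{p}\to L^{p}}\le C/E$ with $C=\|\mathcal{F}^{-1}[(1+|\xi|^{4})^{-1}]\|_{L^{1}}$, not $1/E$. Since this kernel changes sign, $C>1$; this is harmless for $E$ large. Note also that the kernel bound is still needed anyway for $(H+E)^{iy}$, so the Neumann series spares it only at the endpoint.
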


\begin{proof}
First we show that $(\Delta^{2} + E)(H + E)^{-1} \in \mathbb{B}(L^{p})$. Since $H$ is bounded from below, there exists $E_{0} > 0$ such that for $E \geq E_{0}$, $H + E$ is a positive self-adjoint operator. In this case we may write
\[ (H + E)^{-1}f = \int_{0}^{\infty} e^{-tH}e^{-Et}fdt,\ \ f \in L^{2}. \]
By \cite{Deng-1}, the semigroup $e^{-tH}$ extends to an analytic semigroup $e^{zH}$ on $L^{1}$ with angle $\frac{\pi}{2}$, and its kernel satisfies
\begin{equation}
|e^{-tH}(x, y)| \lesssim t^{-\frac{n}{4}} \exp \left \{ -c\dfrac{|x - y|^{\frac{4}{3}}}{t^{\frac{1}{3}}} + \omega t \right\},\ \ t > 0,
\end{equation}
where $c, \omega > 0$. For any $1 \leq p \leq \infty$ and $t \geq 0$,
\begin{align*}
\int_{\mathbb{R}^{n}} t^{-\frac{n}{4}} \exp \left \{ -c\dfrac{|x - y|^{\frac{4}{3}}}{t^{\frac{1}{3}}} + \omega t \right\} dy \lesssim e^{\omega t}\int_{0}^{\infty} e^{-s}s^{\frac{3}{4}n - 1} ds \leq Ce^{\omega t}\Gamma\left(\frac{3}{4}n\right).
\end{align*}
Hence
\[ \|e^{-tH}\|_{L^{p} \rightarrow L^{p}} \lesssim e^{\omega t } \]
follows. Therefore, if $E > \max(E_{0}, \omega)$, then for $f \in L^{2} \cap L^{p}$,
\[ \|(H + E)^{-1}f\|_{L^{p}} \leq \int_{0}^{\infty}e^{-Et}\|e^{-tH}f\|_{L^{p}}dt \lesssim \int_{0}^{\infty}e^{-(E-\omega )t}dt \|f\|_{L^{p}} \lesssim |E - \omega|^{-1}\|f\|_{L^{p}}. \]
Thus $(H + E)^{-1}$ extends to a bounded operator on $L^{p}$. Moreover,
\[ \Delta^{2}(H + E)^{-1}f = (H + E -V -E)(H + E)^{-1}f = \{1 - (V + E)(H + E)^{-1}\}f \]
so that $\|\Delta^{2}(H + E)^{-1}f\|_{L^{p}} \lesssim (1 + \|V\|_{L^{\infty}})\|f\|_{L^{p}}$ for $f \in L^{2}\cap L^{p}$. By density, we obtain $(\Delta^{2} + E)(H + E)^{-1} \in \mathbb{B}(L^{p})$. The case $0 < s < 4$ follows from Stein's interpolation theorem exactly as in Proposition \ref{prop:h equ}.

Next we show $(H + E)(\Delta^{2}+E)^{-1}$. Similarly, $(\Delta^{2} + E)^{-1}$ is bounded on $L^{p}$. Furthermore,
\[ (H + E)(\Delta^{2} + E)^{-1}f = \{1 + V(\Delta^{2} + E)^{-1}\}f \]
so that $\|(H + E)(\Delta^{2} + E)^{-1}f\|_{L^{p}} \lesssim (1 + \|V\|_{L^{\infty}})\|f\|_{L^{p}}$ for $f \in L^{2}\cap L^{p}$. Hence $(H + E)(\Delta^{2} + E)^{-1} \in \mathbb{B}(L^{p})$. The case $0 < s < 4$ is the same.
\end{proof}

\section{Strichartz Estimates and $L^{p}$-$L^{q}$ Decay Estimates}
For an external force term $h\in L_{loc}^{1}(I, H^{-4})$ and initial data $u_{0} \in L^{2}$, define
\begin{equation}\label{eq:duhamel}
u(t) = e^{itH}u_{0} + i\int_{0}^{t} e^{i(t - s)H}h(s) ds.
\end{equation}

We say that a pair $(q, r)$ is Schr\"{o}dinger admissible (abbreviated S-admissible) if $2 \leq q, r \leq \infty$, $(q, r, n) \neq (2, \infty, 2)$, and
\[ \dfrac{2}{q} + \dfrac{n}{r} = \dfrac{n}{2}. \]
We say that a pair $(q, r)$ is biharmonic admissible (abbreviated B-admissible) if $2 \leq q, r \leq \infty$, $(q, r, n) \neq (2, \infty, 4)$, and
\[ \dfrac{4}{q} + \dfrac{n}{r} = \dfrac{n}{2}. \]

In order to state the assumptions on $V$, define
\begin{equation}
\begin{array}{lcr}
H_{\ell} = 4^{\ell}\Delta^{2}, & V_{\ell}(x) = (-x\cdot \nabla_{x})^{\ell}V(x), & \ell = 1, 2.
\end{array}
\end{equation}
\begin{assumption}\label{as:strichartz}
Let $V$ be a real-valued potential satisfying $V \in L^{\frac{n}{4}, \infty}$. Assume that for each $\ell = 0, 1, 2$, we have $(x\cdot \nabla)^{\ell}V \in L^{1}_{loc}$ and $|(x\cdot \nabla)^{\ell}V|^{1/2}(\Delta^{2} + 1)^{-1/2}$ is bounded on $L^{2}$. Furthermore, assume that for $u \in C_{0}^{\infty}$,
\begin{align}
  \langle (\Delta^{2} + V)u, u \rangle &\gtrsim \langle \Delta^{2}u, u \rangle, \\
  \langle (H_{1} + V_{1})u, u \rangle &\gtrsim \langle \Delta^{2}u, u \rangle, \\
  |\langle (H_{2} + V_{2})u, u \rangle| &\lesssim \langle (H_{1} + V_{1})u, u \rangle.
\end{align}
\end{assumption}

The following Strichartz estimates for \eqref{eq:duhamel} were proved in \cite{Miz}.
\begin{proposition}\label{prop:strichartz}
Let $n \geq 5$, and let $H = \Delta^{2} + V$ satisfy Assumption \ref{as:strichartz}. Then the following statements hold for $u$ given by \eqref{eq:duhamel}.
\begin{itemize}
\item (Standard Strichartz estimate)

Let $(p_{1}, q_{1})$ and $(p_{2}, q_{2})$ be B-admissible. Then
\begin{equation}\label{standard}
\|u\|_{L^{p_{1}}(I, L^{q_{1}})} \lesssim \|u_{0}\|_{L^{2}} + \|h\|_{L^{p_{2}^{\prime}}(I, L^{q_{2}^{\prime}})}.
\end{equation}
\item (Improved Strichartz estimate)

Let $(p, q)$ and $(\tilde{p}, \tilde{q})$ be S-admissible. Then
\begin{equation}\label{improved}
\| |\nabla|^{\frac{2}{p}}u\|_{L^{p}(I, L^{q})} \lesssim \|u_{0}\|_{L^{2}} + \||\nabla|^{-\frac{2}{\tilde{p}}}h\|_{L^{\tilde{p}^{\prime}}(I, L^{\tilde{q}^{\prime}})}.
\end{equation}
\end{itemize}
\end{proposition}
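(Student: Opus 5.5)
The statement is quoted from \cite{Miz}. To reprove it, I would follow the Kato smoothing / Rodnianski--Schlag approach for the free biharmonic flow, adapted through a perturbative Duhamel argument.

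\textbf{Step 1: free estimates.} Start from the free propagator $e^{it\Delta^{2}}$. Its dispersive bound $\|e^{it\Delta^{2}}\|_{L^{1}\to L^{\infty}}\lesssim |t|^{-n/4}$, combined with Keel--Tao, gives \eqref{standard} for $V=0$. The improved estimate \eqref{improved} for $V=0$ comes from the sharper kernel bound $|\nabla|^{\frac{n}{2}\cdot\frac{2}{4}}$-type gain. Equivalently, one takes a Littlewood--Paley decomposition, uses the frequency-localized decay $\|P_{N}e^{it\Delta^{2}}\|_{1\to\infty}\lesssim N^{n}(1+N^{4}|t|)^{-n/2}$, which behaves like the Schr\"odinger decay at scale $N$, and sums with the weights $N^{2/p}$.

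\textbf{Step 2: smoothing for $H$.} Factor $V=v_{1}v_{2}$ with $v_{1}=|V|^{1/2}$ and $v_{2}=|V|^{1/2}\operatorname{sgn}V$, and write Duhamel with respect to $H_{0}=\Delta^{2}$:
\[
e^{itH}u_{0}=e^{itH_{0}}u_{0}-i\int_{0}^{t}e^{i(t-s)H_{0}}v_{1}\,v_{2}e^{isH}u_{0}\,ds .
\]
The key input is the global Kato smoothing estimate
\[
\|v_{2}e^{itH}u_{0}\|_{L^{2}_{t,x}}\lesssim\|u_{0}\|_{L^{2}},
\]
together with the dual estimate for $v_{1}$ and $e^{itH_{0}}$. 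For the free operator, this follows from boundedness of $|V|^{1/2}(\Delta^{2}+1)^{-1/2}$ and the Hardy/weak-$L^{n/4}$ control. For $H$, I would prove it with the Mourre/virial method. The assumptions in Assumption~\ref{as:strichartz} play exactly this role. The generator of dilations $A=\frac{1}{2i}(x\cdot\nabla+\nabla\cdot x)$ satisfies $i[H,A]=H_{1}+V_{1}$, which is positive and bounded below by $\Delta^{2}$. The double commutator $H_{2}+V_{2}$ is controlled by the first commutator. Positivity of $H$ rules out eigenvalues and zero resonances at the level needed, and yields a uniform resolvent bound
\[
\sup_{z\notin\mathbb{R}}\|v(H-z)^{-1}v^{*}\|_{L^{2}\to L^{2}}<\infty .
\]
Kato's theorem then converts this bound into the smoothing estimate.

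\textbf{Step 3: transfer to Strichartz.} Insert the smoothing estimate into the Duhamel formula. The free inhomogeneous Strichartz bound with forcing $v_{1}F$, where $F\in L^{2}_{t,x}$, requires an endpoint-type estimate
\[
\Big\|\int e^{i(t-s)H_{0}}v_{1}F(s)\,ds\Big\|_{L^{p}L^{q}}\lesssim\|F\|_{L^{2}_{t,x}} .
\]
This holds by a $TT^{*}$ argument: the homogeneous Strichartz bound composes with the dual smoothing bound for $H_{0}$. The retarded version then follows from the Christ--Kiselev lemma when $p>2$, and from the endpoint argument of Ionescu--Kenig for $p=2$. Inhomogeneous terms in $h$ are handled in the same way by duality.

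\textbf{Main obstacle.} The hardest step is the uniform resolvent bound for $H$ near zero energy when $V$ is only in $L^{n/4,\infty}$. Here the commutator hypotheses must be used to run a virial argument with weights that scale correctly. For the improved estimate, the smoothing must also be commuted past $|\nabla|^{2/p}$, which is nontrivial.
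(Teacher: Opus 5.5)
The paper gives no proof of this proposition; it imports it from \cite{Miz}. So there is no in-text argument to compare with, and your sketch has to stand on its own.

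\textbf{Main gap.} The overall architecture is the standard one: weighted resolvent bounds for $H$ from the commutator hypotheses of Assumption~\ref{as:strichartz}, a Duhamel expansion around $e^{it\Delta^{2}}$, and Christ--Kiselev away from the endpoint. But there is a genuine gap exactly where Assumption~\ref{as:strichartz} must be used. The bound $\sup_{z\notin\mathbb{R}}\|v_{2}(H-z)^{-1}v_{1}\|_{L^{2}\to L^{2}}<\infty$ is asserted, and the mechanism you name does not produce it.

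\begin{itemize}
\item Positivity $H\gtrsim\Delta^{2}$ only controls the spectrum at and below $0$. Positive embedded eigenvalues are a real possibility for fourth-order operators (cf.\ Assumption~\ref{as:wave}(4)). What excludes them is the virial identity coming from $i[H,A]=H_{1}+V_{1}\gtrsim\Delta^{2}$, not positivity.
\item Standard, energy-localized Mourre theory turns a positive commutator into $\langle A\rangle^{-s}$-weighted ($s>1/2$) resolvent bounds only on compact subintervals of $(0,\infty)$. It gives no control as $z\to0$ or $|z|\to\infty$.
\item To get uniformity you must exploit that the commutator is bounded below by $\Delta^{2}$ itself, i.e.\ a global, scale-invariant Mourre estimate, with the double-commutator hypothesis absorbing the errors. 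That yields $H$-smoothness of something like $\langle A\rangle^{-s}H^{1/2}$.
\item This must then be converted into a spatial weight, e.g.\ via a Hardy-type bound in the dilation variable: $|x|^{-2}(\Delta^{2})^{-1/2}$ commutes with dilations and its Mellin symbol decays like $\langle\tau\rangle^{-2}$.
\item The natural weight this produces is $|x|^{-2}$. Your expansion instead needs $v_{2}=|V|^{1/2}\operatorname{sgn}V$ with $V$ only in $L^{n/4,\infty}$.
\item Writing $v_{2}(H-z)^{-1}v_{1}=(1+Q_{0}(z))^{-1}Q_{0}(z)$ with $Q_{0}(z)=v_{2}(\Delta^{2}-z)^{-1}v_{1}$ just moves the problem to uniform invertibility of $1+Q_{0}(z)$. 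No compactness is available at this scaling.
\end{itemize}
So you need either a pointwise bound $|V|\lesssim|x|^{-4}$, which lets you write $V=|x|^{-2}(|x|^{4}V)|x|^{-2}$, or a direct argument. Your ``main obstacle'' paragraph names this step but does not carry it out, and it is the whole content of the result.

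A smaller correction: for the free flow, $L^{n/2,\infty}$ weights are smoothing because of the Lorentz-space endpoint Strichartz estimate together with \eqref{weak holder}. Boundedness of $|V|^{1/2}(\Delta^{2}+1)^{-1/2}$ gives nothing global in time.

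\textbf{Secondary gap.} Handling the inhomogeneous terms ``the same way by duality'' covers only the non-endpoint cases, via $TT^{*}$ and Christ--Kiselev. Christ--Kiselev fails at the double endpoint: $p_{1}=p_{2}=2$ in \eqref{standard} and $p=\tilde p=2$ in \eqref{improved}. The latter, with $(\tilde p,\tilde q)=(2,\frac{2n}{n-2})$, is exactly what Corollary~\ref{sobolev stricartz} uses.

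There you must expand once more:
$\int_{0}^{t}e^{i(t-s)H}h\,ds=\int_{0}^{t}e^{i(t-s)\Delta^{2}}h\,ds+i\int_{0}^{t}e^{i(t-s)\Delta^{2}}v_{1}G\,ds$, with $G=v_{2}\int_{0}^{s}e^{i(s-r)H}h\,dr$. Bounding $\|G\|_{L^{2}_{t,x}}$ requires the retarded $L^{2}_{t,x}\to L^{2}_{t,x}$ bound for $v_{2}\int_{0}^{t}e^{i(t-s)H}v_{1}(\cdot)\,ds$. That bound comes from the full uniform resolvent estimate via Plancherel in time, not merely from Kato smoothing, and it has to be combined with free double-endpoint estimates.

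\textbf{Two places where you overestimate the difficulty.}
\begin{itemize}
\item The difficulty you flag for \eqref{improved} is not real. In the Duhamel formula $|\nabla|^{2/p}$ only ever falls on the free propagator. So the improved bound for $H$ follows from your Step~1 plus the same smoothing input, with no commutation against the weights.
\item Since $v_{1}\in L^{n/2,\infty}$, the free retarded endpoint bound follows directly from the Lorentz-space double-endpoint Keel--Tao estimate. Ionescu--Kenig is not needed there.
\end{itemize}
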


\begin{corollary}\label{sobolev stricartz}
Let $n \geq 5$, and let $H = \Delta^{2} + V$ satisfy Assumptions \ref{as:equ} and \ref{as:strichartz}. Then for $u$ given by \eqref{eq:duhamel} and any B-admissible pair $(q, r)$ satisfying $r < \frac{n}{2}$, we have
\begin{equation}\label{strichartz}
\|\Delta u\|_{L^{q}(I, L^{r})} \lesssim \|\Delta u_{0}\|_{L^{2}} + \|\nabla h\|_{L^{2}(I, L^{\frac{2n}{n + 2}})}.
\end{equation}
\end{corollary}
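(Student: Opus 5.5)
We apply $H^{1/2}$ to the Duhamel formula \eqref{eq:duhamel}, use the Strichartz estimates of Proposition \ref{prop:strichartz} for the resulting equation, and pass between $\Delta$ and $H^{1/2}$ with the norm equivalence of Proposition \ref{prop:h equ}. Since $H^{1/2}$ commutes with $e^{itH}$,
\[
H^{1/2}u(t) = e^{itH}H^{1/2}u_{0} + i\int_{0}^{t}e^{i(t-s)H}H^{1/2}h(s)\,ds ,
\]
so $v := H^{1/2}u$ is again of the form \eqref{eq:duhamel}, with data $H^{1/2}u_{0}$ and forcing $H^{1/2}h$. Under Assumption \ref{as:equ}, Proposition \ref{prop:h equ} with $s=2$ gives $\|\Delta u(t)\|_{L^{r}} \sim \|H^{1/2}u(t)\|_{L^{r}}$ for $1<r<\frac{n}{2}$; this is where the restriction $r<\frac n2$ enters. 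Hence $\|\Delta u\|_{L^{q}(I,L^{r})} \lesssim \|v\|_{L^{q}(I,L^{r})}$. For the data, $\|H^{1/2}u_{0}\|_{L^{2}}^{2} = \langle Hu_0,u_0\rangle \lesssim \|\Delta u_{0}\|_{L^{2}}^{2}$, using $V\in L^\infty$ (or the $L^2$ case $s=2$, $p=2<\frac n4$ of \eqref{eq2}).

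For the forcing we use the improved Strichartz estimate \eqref{improved}. We take the output pair $(p,q_S)$ to be the S-admissible pair with $\frac{2}{p}=\frac{4}{q}$, i.e.\ $p=q/2$, and note that $\frac{2}{p}+\frac{n}{r}=\frac{4}{q}+\frac nr=\frac n2$, so $(p,r)$ is S-admissible exactly because $(q,r)$ is B-admissible. This is not directly what we want: \eqref{improved} controls $|\nabla|^{4/q}v$ in $L^{q/2}_tL^r_x$, not $v$ in $L^q_t L^r_x$. Two routes are possible.

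\emph{Route (a).} Apply the standard estimate \eqref{standard} to $v$ to bound $\|v\|_{L^q L^r}$ by $\|H^{1/2}u_0\|_{L^2}$ plus a dual B-admissible norm of $H^{1/2}h$. We then need to trade $H^{1/2}h$ for $\nabla h$ in $L^2_tL^{2n/(n+2)}_x$, which costs one derivative and lands in a dual pair off the B-admissible line. Route (a) therefore does not close directly.

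\emph{Route (b), which we adopt.} Apply the improved estimate with the dual S-admissible pair $(\tilde p,\tilde q)=(2,\frac{2n}{n-2})$, so that $\tilde p'=2$, $\tilde q'=\frac{2n}{n+2}$ and $|\nabla|^{-2/\tilde p}=|\nabla|^{-1}$. This gives
\[
\||\nabla|^{\frac{2}{p}}v\|_{L^{p}(I,L^{r})} \lesssim \|H^{1/2}u_{0}\|_{L^{2}} + \||\nabla|^{-1}H^{1/2}h\|_{L^{2}(I,L^{\frac{2n}{n+2}})} .
\]
The forcing term is then estimated pointwise in time by
\[
\||\nabla|^{-1}H^{1/2}h\|_{L^{\frac{2n}{n+2}}} \lesssim \||\nabla|^{-1}|\nabla|^{2}h\|_{L^{\frac{2n}{n+2}}}=\|\nabla h\|_{L^{\frac{2n}{n+2}}} .
\]
Here we need $|\nabla|^{-1}H^{1/2}|\nabla|^{-1}$ bounded on $L^{2n/(n+2)}$. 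Since $\frac{2n}{n+2}<\frac n2$ for $n\ge5$, this should follow from Stein interpolation of the analytic family $|\nabla|^{-z}H^{z/2}|\nabla|^{-z}$, exactly as in the proof of Proposition \ref{prop:h equ}. The bound \eqref{eq2} and the imaginary-power bounds cover $\Re z=0$ and $\Re z=2$ for the factorized pieces.

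The main obstacle is the remaining mismatch on the left-hand side: route (b) yields $\||\nabla|^{2/p}H^{1/2}u\|_{L^{q/2}L^r}$ rather than $\|\Delta u\|_{L^q L^r}$. We would first try to obtain the desired left side directly by a $TT^*$/Christ--Kiselev argument. Here one uses the homogeneous B-admissible estimate for $e^{itH}$, together with the dual smoothing estimate $\|\int e^{-isH}|\nabla|^{-1}... \|$ implicit in \eqref{improved}, since \cite{Miz} proves both from the same local smoothing bounds. One then splits into the free term (handled by \eqref{standard}) and the Duhamel term (handled by the mixed B/S-admissible inhomogeneous estimate). If that fails, the fallback is to choose $p$ so that $|\nabla|^{2/p}$ gives exactly the two derivatives and absorb the scaling by Sobolev embedding. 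Whichever route works, the final step is to convert $H^{1/2}$ back to $\Delta$ via Proposition \ref{prop:h equ}, which is valid since $r<\frac n2$.
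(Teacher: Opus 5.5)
Your proposal does not prove the estimate, because the decisive step is left open. With the output pair $(q/2,r)$, the improved estimate \eqref{improved} controls $\||\nabla|^{4/q}H^{1/2}u\|_{L^{q/2}(I,L^{r})}$. That quantity has the wrong time exponent and surplus derivatives, and nothing in the proposal converts it into $\|\Delta u\|_{L^{q}(I,L^{r})}$. Moreover, $(q/2,r)$ is not S-admissible when $q<4$, and such pairs do occur in the range $r<\frac n2$ once $n\geq 7$. The mixed B/S-admissible inhomogeneous estimate via $TT^{*}$/Christ--Kiselev is only something you say you would try. It is not part of Proposition \ref{prop:strichartz}, you do not prove it, and it is not needed. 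The fallback, as stated, would require $2/p=2$, i.e.\ $p=1$, which is not admissible.

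The missing idea is to keep the time exponent $q$ and change only the spatial exponent. Take the S-admissible pair $(q,\tilde r)$ with $\frac{1}{\tilde r}=\frac1r+\frac{2}{nq}$; indeed $\frac2q+\frac{n}{\tilde r}=\frac4q+\frac nr=\frac n2$ and $\tilde r\geq 2$. Then the gain $|\nabla|^{2/q}$ in \eqref{improved} is exactly what the Sobolev inequality $\|w\|_{L^{r}}\lesssim\||\nabla|^{2/q}w\|_{L^{\tilde r}}$ consumes; this is the paper's step \eqref{apply sobolev}. With it, your route (b) closes at once:
\[
\Bigl\|\int_{0}^{t}e^{i(t-s)H}H^{1/2}h(s)\,ds\Bigr\|_{L^{q}(I,L^{r})}\lesssim\Bigl\||\nabla|^{\frac{2}{q}}\int_{0}^{t}e^{i(t-s)H}H^{1/2}h(s)\,ds\Bigr\|_{L^{q}(I,L^{\tilde r})}\lesssim\||\nabla|^{-1}H^{1/2}h\|_{L^{2}(I,L^{\frac{2n}{n+2}})}.
\]
After this, three ingredients give \eqref{strichartz}: your pointwise bound on the forcing, \eqref{eq1} at $L^{r}$ (this is where $r<\frac n2$ enters), and \eqref{standard} for the free part. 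The simplest justification of the pointwise bound is the factorization $|\nabla|^{-1}H^{1/2}|\nabla|^{-1}=(|\nabla|^{-1}H^{1/4})(H^{1/4}|\nabla|^{-1})$. One then applies \eqref{eq2} with $s=1$ on $L^{\frac{2n}{n+2}}$ and, by duality, on $L^{\frac{2n}{n-2}}$.

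The paper instead argues in two stages: it first proves \eqref{conc-1} with $2-\frac2q$ derivatives and then applies it to $H^{\frac{1}{2q}}h$. Once the pair is corrected, your one-step version is a mild streamlining of that argument.

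A minor slip: $V\in L^{\infty}$ alone does not give $\|H^{1/2}u_{0}\|_{L^{2}}\lesssim\|\Delta u_{0}\|_{L^{2}}$. Use \eqref{eq2} with $s=2$, whose condition is $2<\frac n2$, not $2<\frac n4$.
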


\begin{proof}
Let $(q, r)$ be any B-admissible pair with $r < \dfrac{n}{2}$. By Sobolev's inequality,
\begin{equation}\label{apply sobolev}
\||\nabla|^{2 - \frac{2}{q}} \int_{0}^{t} e^{i(t - s)H}h(s) ds\|_{L^{q}(I, L^{r})} \lesssim \|\Delta \int_{0}^{t} e^{i(t - s)H}h(s) ds\|_{L^{q}(I, L^{\tilde{r}})}
\end{equation}
where $\frac{1}{\tilde{r}} = \frac{1}{r} + \frac{2}{nq}$. Then $(q, \tilde{r})$ is S-admissible. Hence, by the Strichartz estimate \eqref{improved} and \eqref{eq1}, \eqref{eq2},
\begin{equation}
\begin{aligned}\label{calc-1}
\|\Delta \int_{0}^{t} e^{i(t - s)H}h(s) ds\|_{L^{q}(I, L^{\tilde{r}})} &\lesssim \| H^{\frac{1}{2q}}\int_{0}^{t} e^{i(t - s)H}H^{\frac{1}{2} - \frac{1}{2q}}h(s) ds\|_{L^{q}(I, L^{\tilde{r}})} \\
&\lesssim \||\nabla|^{-1}H^{\frac{1}{2} - \frac{1}{2q}}h\|_{L^{2}(I, L^{\frac{2n}{n + 2}})} \\
&\lesssim \|H^{\frac{1}{4} - \frac{1}{2q}}h\|_{L^{2}(I, L^{\frac{2n}{n + 2}})} \\
&\lesssim \||\nabla|^{1 - \frac{2}{q}} h\|_{L^{2}(I, L^{\frac{2n}{n + 2}})}.
\end{aligned}
\end{equation}
From \eqref{apply sobolev} and \eqref{calc-1},
\begin{equation}\label{conc-1}
\||\nabla|^{2 - \frac{2}{q}} \int_{0}^{t} e^{i(t - s)H}h(s) ds\|_{L^{q}(I, L^{r})} \lesssim \||\nabla|^{1 - \frac{2}{q}} h\|_{L^{2}(I, L^{\frac{2n}{n + 2}})}.
\end{equation}
Using \eqref{conc-1}, for any B-admissible pair $(q, r)$,
\begin{equation}
\begin{aligned}\label{inhomogeneous}
\|\Delta \int_{0}^{t} e^{i(t - s)H}h(s) ds\|_{L^{q}(I, L^{r})} &\lesssim \|H^{\frac{1}{2} - \frac{1}{2q}} \int_{0}^{t} e^{i(t - s)H}H^{\frac{1}{2q}}h(s) ds\|_{L^{q}(I, L^{r})} \\
&\lesssim \||\nabla|^{2 - \frac{2}{q}} \int_{0}^{t} e^{i(t - s)H}H^{\frac{1}{2q}}h(s) ds\|_{L^{q}(I, L^{r})} \\
&\lesssim \||\nabla|^{1 - \frac{2}{q}} H^{\frac{1}{2q}}h\|_{L^{2}(I, L^{\frac{2n}{n + 2}})} \\
&\lesssim \|H^{\frac{1}{4}}h\|_{L^{2}(I, L^{\frac{2n}{n + 2}})}  \\
&\lesssim \|\nabla h\|_{L^{2}(I, L^{\frac{2n}{n + 2}})}.
\end{aligned}
\end{equation}
Also,
\begin{equation}\label{homogeneous}
\|\Delta e^{itH}u_{0}\|_{L^{q}(I, L^{r})} \lesssim \|e^{itH}H^{1/2}u_{0}\|_{L^{q}(I, L^{r})} \lesssim \|H^{1/2}u_{0}\|_{L^{2}} \lesssim \|\Delta u_{0}\|_{L^{2}}.
\end{equation}
Combining \eqref{inhomogeneous} and \eqref{homogeneous}, we obtain \eqref{strichartz}.
\end{proof}

\begin{remark}
The pair $\left(\frac{2(n+4)}{n-4}, \frac{2n(n+4)}{n^{2}+16} \right)$ is B-admissible, and for $n \geq 5$ we have $\frac{2n(n+4)}{n^{2}+16} < \frac{n}{2}$. Therefore, by Sobolev's inequality and the Strichartz estimate \eqref{strichartz}, for $u$ given by \eqref{eq:duhamel},
\[ \|u\|_{W(I)} \lesssim \|u\|_{M(I)} \lesssim \|u_{0}\|_{\dot{H}^{2}} + \|h\|_{N(I)} \]
holds. This inequality will be used in Sections 6 and later.
\end{remark}

The following estimate will be needed in the discussion of scattering in Section 10.
\begin{proposition}
Assume that $V$ satisfies Assumptions \ref{as:equ} and \ref{as:strichartz}. For $h\in L^{2}(\mathbb{R}, \dot{H}^{1, \frac{2n}{n + 2}})$,
\begin{equation}\label{strichartz for scattering}
\|\int_{\mathbb{R}} e^{-isH}h(s)ds\|_{\dot{H}^{2}} \lesssim \|\nabla h\|_{L^{2}(\mathbb{R}, L^{\frac{2n}{n + 2}})}.
\end{equation}
\begin{proof}
\begin{align*}
\|\int_{\mathbb{R}} e^{-isH}h(s)ds\|_{L^{2}}^{2} &= \left| \int_{\mathbb{R}} \int_{\mathbb{R}} \langle e^{i(t - s)H}h(s), h(t) \rangle ds dt\right| \\
&= \left| \int_{\mathbb{R}} \int_{\mathbb{R}} \langle \nabla e^{i(t - s)H}h(s), \nabla^{-1} h(t) \rangle ds dt\right| \\
&\leq \|\nabla \int_{\mathbb{R}} e^{-i(t-s)H}h(s)ds\|_{L^{2}(\mathbb{R}, L^{\frac{2n}{n - 2}})}\|\nabla^{-1}h\|_{L^{2}(\mathbb{R}, L^{\frac{2n}{n + 2}})} \\
&\lesssim \|\nabla^{-1}h\|_{L^{2}(\mathbb{R}, L^{\frac{2n}{n + 2}})}^{2}.
\end{align*}
Hence, by \eqref{eq1} and \eqref{eq2},
\begin{align*}
\|\Delta \int_{\mathbb{R}} e^{-isH}h(s)ds\|_{L^{2}} &\lesssim \|\int_{\mathbb{R}} e^{-isH}H^{1/2}h(s)ds\|_{L^{2}} \\
&\lesssim \|\nabla^{-1}H^{1/2}h\|_{L^{2}(\mathbb{R}, L^{\frac{2n}{n + 2}})} \\
&\lesssim \|H^{1/4}h\|_{L^{2}(\mathbb{R}, L^{\frac{2n}{n + 2}})} \\
&\lesssim \|\nabla h\|_{L^{2}(\mathbb{R}, L^{\frac{2n}{n + 2}})}.
\end{align*}
\end{proof}
\end{proposition}

Next we consider $L^{p}$-$L^{q}$ decay estimates. Define
\[ I(t, x) = \dfrac{1}{(2\pi)^{n}}\int_{\mathbb{R}^{n}} e^{it|\xi|^{4} - i\langle x, \xi \rangle} d\xi.\]
Then $I$ is the fundamental solution of \eqref{eq:duhamel} in the case $V = 0$, i.e., $H = \Delta^{2}$. By \cite{Ben}, the following estimate is known:

\begin{equation}\label{eq:dispersion}
|D^{\alpha}I(t, x)| \leq Ct^{-\frac{n + |\alpha|}{4}}(1 + t^{-\frac{1}{4}}|x|)^{\frac{|\alpha|- n}{3}}.
\end{equation}
From \eqref{eq:dispersion} one obtains the time decay estimate
\begin{equation}\label{lplq}
\|e^{it\Delta^{2}}g\|_{L^{p}} \lesssim |t|^{-\frac{n}{4}(1 - \frac{2}{p})}\|g\|_{L^{p^{\prime}}}
\end{equation}
for $2 \leq p \leq \infty$, where $p^{\prime}$ denotes the conjugate exponent of $p$. We would like to obtain an analogous decay estimate in the case $H = \Delta^{2} + V$. To this end, we introduce wave operators.

By \cite{Burak}, the following boundedness of wave operators on $L^{p}$ is known.
\begin{proposition}\label{wave bounded}
Assume that $V$ satisfies Assumption \ref{as:wave}. Then the wave operators
\begin{equation}
W_{\pm} := s - \lim_{t \rightarrow \pm \infty} e^{itH}e^{-it\Delta^{2}}
\end{equation}
are bounded on $L^{p}$ for every $1 < p < \infty$.
\end{proposition}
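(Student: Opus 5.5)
The plan is to follow the stationary approach of Yajima, adapted to $H=\Delta^{2}+V$ as in \cite{Burak}. The first step is to establish the $L^{2}$ theory. Assumption \ref{as:wave}(1) and Kato smoothing for $\Delta^{2}$ give existence of $W_{\pm}$ on $L^{2}$ by Cook's method. Assumption \ref{as:wave}(4) excludes positive eigenvalues and rules out a zero-energy resonance or eigenvalue. With this, the limiting absorption principle for $R_{V}^{\pm}(\lambda)=(H-\lambda\mp i0)^{-1}$ holds uniformly on $(0,\infty)$ in weighted $L^{2}$ spaces. This gives asymptotic completeness and the intertwining relation $f(H)P_{ac}(H)=W_{\pm}f(\Delta^{2})W_{\pm}^{*}$.

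Next I would write the stationary representation
\[
W_{+}u=u-\frac{1}{\pi i}\int_{0}^{\infty}R_{0}^{-}(\mu^{4})\,V\,R_{V}^{+}(\mu^{4})\,\bigl(R_{0}^{+}(\mu^{4})-R_{0}^{-}(\mu^{4})\bigr)u\,\mu^{3}\,d\mu .
\]
Here the free biharmonic resolvent is reduced to Schr\"{o}dinger resolvents through
\[
R_{0}(\mu^{4})=\frac{1}{2\mu^{2}}\bigl(R_{\Delta}(\mu^{2})-R_{\Delta}(-\mu^{2})\bigr).
\]
Its kernel is therefore explicit in terms of Hankel functions: an oscillatory part $e^{i\mu|x|}$ plus an exponentially decaying part $e^{-\mu|x|}$. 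I would split $W_{+}=W^{low}+W^{high}$ with a smooth cutoff $\chi(\mu)$ near $\mu=0$.

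For the high-energy part, I would iterate the resolvent identity $R_{V}^{+}=R_{0}^{+}-R_{0}^{+}VR_{V}^{+}$ to obtain a finite Born series plus a remainder. For each Born term $\int R_{0}^{-}V(R_{0}^{+}V)^{k}(R_{0}^{+}-R_{0}^{-})\,\mu^{3}\widetilde{\chi}(\mu)\,d\mu$, I would show that the integral kernel is bounded by an \emph{admissible kernel}, meaning $\sup_{x}\int|K(x,y)|\,dy+\sup_{y}\int|K(x,y)|\,dx<\infty$, or else by a Calder\'{o}n--Zygmund kernel. Either bound yields $L^{p}$-boundedness for $1<p<\infty$; note that $p=1,\infty$ are excluded, consistent with singular integrals appearing in the first terms. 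The first Born term is handled by Fourier analysis: it is essentially a pseudodifferential operator with symbol built from $\widehat{V}$. This is where Assumption \ref{as:wave}(3) on $\mathcal{F}(\langle\cdot\rangle^{\sigma}V)$ enters, to beat the $\mu^{3}$ growth against the $\mu^{-2}$ gain in each resolvent. For the remainder I would take $k$ large, which depends on $n$, so that enough smoothing makes the tail amenable to weighted $L^{2}$ resolvent bounds, with high-energy decay $\|\langle x\rangle^{-s}R_{V}^{\pm}(\mu^{4})\langle x\rangle^{-s}\|\lesssim\mu^{-3}$. The kernel bound then comes from pairing this with pointwise bounds on $R_{0}$.

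For the low-energy part, I would use the symmetric resolvent identity. Write $v=|V|^{1/2}$ and $U=\operatorname{sgn}V$, and set $M^{\pm}(\mu)=U+vR_{0}^{\pm}(\mu^{4})v$. Regularity of zero means that $M^{\pm}(0)$ is invertible on $L^{2}$, so $M^{\pm}(\mu)^{-1}$ admits an expansion in $\mu$ with remainders controlled in Hilbert--Schmidt norm. The decay $\beta>n+3$ or $\beta>n+4$ from Assumption \ref{as:wave}(1) provides enough weights to differentiate the expansion the number of times required. This reduces $W^{low}$ to operators whose kernels are oscillatory integrals in $\mu$. Integrating by parts in $\mu$ about $\lfloor n/2\rfloor$ times gives the admissible kernel bound.

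The main obstacle I expect is the dimension-dependent bookkeeping in these integrations by parts. In odd dimensions the free kernel is a finite sum of elementary terms, but in even dimensions it involves genuine Hankel asymptotics, which explains the extra unit of decay required there. The case $n=7$ is critical for the high-energy tail, where the $H^{\varepsilon}$ regularity of Assumption \ref{as:wave}(2) is needed to close the estimate; I would treat it separately, following \cite{Burak}.
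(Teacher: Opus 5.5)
Your outline is the stationary, Yajima-type argument of \cite{Burak}, and that is the paper's route: the paper gives no proof of its own and simply quotes the $L^{p}$-boundedness from \cite{Burak}. So deferring the high-energy Born-series and Fourier estimates (where Assumption \ref{as:wave}(2),(3) enter) and the low-energy expansion of $M^{\pm}(\mu)^{-1}$ (where regularity of zero enters) to that reference is consistent with the paper.

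One slip should be fixed. Your stationary formula $R_{0}^{-}VR_{V}^{+}(R_{0}^{+}-R_{0}^{-})$ has one resolvent too many, which makes the integrand not scale-invariant. With your notation $R_{V}^{\pm}(\lambda)=(H-\lambda\mp i0)^{-1}$ and $W_{+}=s\text{-}\lim_{t\to+\infty}e^{itH}e^{-it\Delta^{2}}$, the correct representation is
\[
W_{+}u=u-\frac{2}{\pi i}\int_{0}^{\infty}R_{V}^{-}(\mu^{4})\,V\,\bigl(R_{0}^{+}(\mu^{4})-R_{0}^{-}(\mu^{4})\bigr)u\,\mu^{3}\,d\mu .
\]
Consequently the Born terms are $(R_{0}^{-}V)^{k+1}(R_{0}^{+}-R_{0}^{-})$, with a single sign of $i0$ throughout, not the mixed products $R_{0}^{-}V(R_{0}^{+}V)^{k}$ you wrote.
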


\begin{proposition}\label{prop:lp-lq}
Assume that $V$ satisfies Assumption \ref{as:wave}. Then for $H = \Delta^{2} + V$, the $L^{p}$-$L^{q}$ estimate
\begin{equation}\label{lplq for H}
\|e^{itH}P_{ac}(H)g\|_{L^{p}} \lesssim |t|^{-\frac{n}{4}(1 - \frac{2}{p})}\|g\|_{L^{p^{\prime}}}
\end{equation}
holds for $2 \leq p < \infty$, where $p^{\prime}$ is the conjugate exponent of $p$.
\begin{proof}
It is known that for any Borel measurable function $f$, the wave operators $W_{\pm}$ satisfy
\begin{equation}\label{base:wave}
f(H)W_{\pm} = W_{\pm}f(\Delta^{2}).
\end{equation}
Also, since $W_{\pm}W_{\pm}^{*} = P_{ac}(H)$, combining this with \eqref{base:wave}, we obtain
\begin{equation}\label{base:wave2}
f(H)P_{ac}(H) = W_{\pm}f(\Delta^{2})W_{\pm}^{*}.
\end{equation}
By Proposition \ref{wave bounded}, \eqref{lplq}, and \eqref{base:wave2},
\[ \|e^{itH}P_{ac}(H)\|_{L^{p^{\prime}}\rightarrow L^{p}} \leq \|W_{\pm}\|_{L^{p}\rightarrow L^{p}}\|e^{it\Delta^{2}}\|_{L^{p^{\prime}}\rightarrow L^{p}}\|W_{\pm}^{*}\|_{L^{p^{\prime}}\rightarrow L^{p^{\prime}}} \lesssim |t|^{-\frac{n}{4}(1 - \frac{2}{p})}. \]
\end{proof}
\end{proposition}

\begin{remark}\label{remark:st:wave:as}
Suppose that $V$ satisfies Assumption \ref{as:strichartz}, that is, the Strichartz estimates hold. Let $u(t) = e^{-itH}f$. Then for any $w \in L^{\frac{n}{2}}$, the endpoint Strichartz estimate yields
\begin{align*}
\|wu\|_{L^{2}(I, L^{2})} \leq \|w\|_{L^{\frac{n}{2}}}\|u\|_{L^{2}(I,L^{\frac{2n}{n - 4}})}\lesssim \|f\|_{L^{2}}.
\end{align*}
It is known that this is equivalent to
\begin{equation}\label{hac}
\sup_{z \in \mathbb{C} \backslash \mathbb{R}}\|w\mathrm{Im}(H - z)^{-1}w\|_{L^{2}\rightarrow L^{2}} < \infty.
\end{equation}
From \eqref{hac}, it is also known that $\mathcal{H}_{ac}(H) = L^{2}$. Hence in this case $P_{ac}(H)$ is the identity operator. Also, if we set $u(t) = \int_{0}^{t}e^{-i(t-s)H}F(s) ds$, then for $w_{1},w_{2}\in L^{\frac{n}{2}}$,
\begin{equation}\label{hac-2}
\begin{aligned}
\|w_{1}u\|_{L^{2}(I, L^{2})} &\leq \|w_{1}\|_{L^{\frac{n}{2}}}\|u\|_{L^{2}(I, L^{\frac{2n}{n-4}})}\\
&\lesssim \|w_{1}\|_{L^{\frac{n}{2}}}\|F\|_{L^{2}(I, L^{\frac{2n}{n+4}})}\\ &\leq \|w_{1}\|_{L^{\frac{n}{2}}}\|w_{2}\|_{L^{\frac{n}{2}}}\|w_{2}^{-1}F\|_{L^{2}(I, L^{2})}.
\end{aligned}
\end{equation}
From \eqref{hac-2},
\begin{equation}
\sup_{z \in \mathbb{C} \backslash \mathbb{R}}\|w_{1}(H - z)^{-1}w_{2}\|_{L^{2}\rightarrow L^{2}} < \infty
\end{equation}
follows. It is known that this implies that zero energy is regular for $H$.
\end{remark}

\begin{remark}
Under Assumption \ref{as:equ}, if the boundedness of wave operators is available, that is, if $V$ satisfies Assumptions \ref{as:wave} and \ref{as:equ}, then one can derive the Strichartz estimate \eqref{strichartz}. Indeed, under Assumptions \ref{as:wave} and \ref{as:equ}, $P_{ac}(H)$ is the identity. Writing $H_{0} = \Delta^{2}$, by the equivalence of Sobolev norms, the identity $e^{itH} = W_{\pm}e^{itH_{0}}W_{\pm}^{*}$, and the boundedness of wave operators, we have
\begin{align*}
\|\Delta e^{itH}u_{0}\|_{L^{p}(I, L^{q})} &\lesssim \|H^{\frac{1}{2}}W_{\pm}e^{itH_{0}}W_{\pm}^{*}u_{0}\|_{L^{p}(I, L^{q})} \\
&\lesssim \|W_{\pm}H_{0}^{\frac{1}{2}}e^{itH_{0}}W_{\pm}^{*}u_{0}\|_{L^{p}(I, L^{q})} \\
&\lesssim \|\Delta e^{itH_{0}}W_{\pm}^{*}u_{0}\|_{L^{p}(I, L^{q})} \\
&\lesssim \|\Delta W_{\pm}^{*}u_{0}\|_{L^{2}} \lesssim \|H^{\frac{1}{2}}  W_{\pm}^{*} u_{0}\|_{L^{2}} \lesssim \|\Delta u_{0}\|_{L^{2}}
\end{align*}
for any B-admissible pair $(p, q)$ with $q < \frac{n}{2}$. Furthermore,
\begin{align*}
\|\Delta \int_{0}^{t}e^{i(t-s)H}h(s)ds\|_{L^{p}(I, L^{q})} &\lesssim \|H^{\frac{1}{2}}W_{\pm}\int_{0}^{t}e^{i(t-s)H_{0}}W_{\pm}^{*}h(s)ds\|_{L^{p}(I, L^{q})} \\
&\lesssim  \|W_{\pm}H^{\frac{1}{2}}_{0}\int_{0}^{t}e^{i(t-s)H_{0}}W_{\pm}^{*}h(s)ds\|_{L^{p}(I, L^{q})} \\
&\lesssim \|\Delta \int_{0}^{t}e^{i(t-s)H_{0}}W_{\pm}^{*}h(s)ds\|_{L^{p}(I, L^{q})}\\
&\lesssim \|\nabla W_{\pm}^{*}h\|_{L^{2}(I, L^{\frac{2n}{n+2}})} \lesssim \|\nabla h\|_{L^{2}(I, L^{\frac{2n}{n+2}})}
\end{align*}
so \eqref{strichartz} follows. Therefore, in what follows, when the boundedness of wave operators is assumed, we use the Strichartz estimate \eqref{strichartz} without explicitly assuming Assumption \ref{as:strichartz}.
\end{remark}

\section{Local and Global Existence in the Subcritical Case}

\begin{proposition}
Assume that $V \in L^{\infty}$ satisfies Assumption \ref{as:equ} and Assumption \ref{as:strichartz}. Let $u_{0} \in H^{2}$, and let $n \geq 5$ and $p \in (1, 2^{\sharp} -1)$. Then, for any such $u_{0}$ and $p$, there exist $T > 0$ and a solution $u \in C([0, T], H^{2})$ to \eqref{eq:NL4S} satisfying $u(0)=u_{0}$. Moreover, the conservation of mass and energy,
\begin{equation}\label{mass energy con}
\begin{aligned}
M(u(t)) &= M(u_{0}), \\
E(u(t)) &= E(u_{0}),
\end{aligned}
\end{equation}
hold for all $t \in [0, T]$.
\end{proposition}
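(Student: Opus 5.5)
We construct the solution by a contraction mapping argument for the Duhamel map
\[
\Phi(u)(t) = e^{itH}u_{0} - i\lambda\int_{0}^{t}e^{i(t-s)H}|u|^{p-1}u(s)\,ds
\]
on a closed ball of a space built from Strichartz norms at the $\dot H^{2}$ level plus an $L^{2}$ level component. The working space is
\[
X_{T}=\{u\in C([0,T],H^{2}) : \|u\|_{L^{\infty}_{T}H^{2}}+\|\Delta u\|_{L^{q}_{T}L^{r}}+\|u\|_{L^{q}_{T}L^{r}}\le 2C\|u_{0}\|_{H^{2}}\},
\]
where $(q,r)$ is a suitable B-admissible pair with $r<n/2$, measured in a norm weak enough (e.g.\ the $L^{\infty}L^{2}\cap L^{q}L^{r}$ metric) that closedness is easy.

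\textbf{Linear estimates.} For the linear part:
\begin{itemize}
\item At the $\dot H^{2}$ level we use Corollary \ref{sobolev stricartz}. To avoid the $N(I)$-type derivative norm on the nonlinearity, we apply $H^{1/2}$ (which commutes with $e^{itH}$) together with the norm equivalence of Proposition \ref{prop:h equ}. This gives $\|\Delta\Phi(u)\|_{L^{q}L^{r}}\lesssim \|\Delta u_{0}\|_{L^{2}}+\|\Delta(|u|^{p-1}u)\|_{L^{\tilde q'}L^{\tilde r'}}$, or alternatively the $\nabla$ form $\|\nabla(|u|^{p-1}u)\|_{L^{2}L^{2n/(n+2)}}$ from \eqref{strichartz}.
\item At the $L^{2}$ level we use \eqref{standard} directly.
\item The $L^{\infty}H^{2}$ bound comes from the same estimates with $(q,r)=(\infty,2)$.
\end{itemize}

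\textbf{Nonlinear estimate.} For the nonlinearity we follow \cite{Pau}. We estimate $\nabla(|u|^{p-1}u)$ by $|u|^{p-1}|\nabla u|$, then apply H\"older and Sobolev embedding ($\dot H^{2,r}\subset L^{\rho}$, $\dot H^{2,r}\subset \dot H^{1,\rho'}$). Since $p<2^{\sharp}-1$, the time exponents do not balance, and H\"older in time yields a factor $T^{\theta}$ with $\theta=\theta(n,p)>0$:
\[
\|\nabla(|u|^{p-1}u)\|_{L^{2}_{T}L^{\frac{2n}{n+2}}}\lesssim T^{\theta}\|u\|_{X_{T}}^{p}.
\]
The difference estimate is analogous, with $|u|^{p-1}+|v|^{p-1}$ times $|u-v|$, carried out only at the $L^{2}$ Strichartz level so that no derivative of $|u|^{p-1}$ (which may fail to be Lipschitz when $p<2$) is needed. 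Choosing $T$ small depending on $\|u_{0}\|_{H^{2}}$ makes $\Phi$ a self-map and a contraction in the weaker metric. Continuity in $H^{2}$ follows from Strichartz, and uniqueness follows by the same difference estimate.

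\textbf{Main obstacle: conservation laws.} Conservation of mass and energy is the main obstacle, since $u$ is only an $H^{2}$ solution and the formal computation (pairing the equation with $\bar u$ and $\partial_{t}\bar u$) needs more regularity. The plan is:
\begin{enumerate}
\item Regularize by replacing the data with $u_{0}^{\varepsilon}=(1+\varepsilon H)^{-1}u_{0}\in H^{4}$, or regularize the equation with $J_{\varepsilon}=(1+\varepsilon H)^{-1}$ applied to the nonlinearity.
\item Check that the corresponding solutions have $\partial_{t}u^{\varepsilon}\in C([0,T],L^{2})$. This uses that $H$ is self-adjoint with domain $H^{4}$ when $V\in L^{\infty}$, with $H$ and $\Delta^{2}$ comparable via Proposition \ref{prop:inh:sobolev}.
\item For these regular solutions, compute $\frac{d}{dt}M=2\,\mathrm{Re}\langle \partial_{t}u,u\rangle=0$, using that $H$ is symmetric and the nonlinear term is real against $\bar u$, and $\frac{d}{dt}E=-\mathrm{Re}\langle \partial_{t}u,\, i\partial_{t}u\rangle=0$.
\item Pass to the limit $\varepsilon\to0$ using continuous dependence of the contraction solution on data in $X_{T}$, together with continuity of $E$ on $H^{2}$: the term $\int V|u|^{2}$ is controlled by $V\in L^{\infty}$, and the term $\|u\|_{L^{p+1}}$ by Sobolev, since $p+1<2^{\sharp}$.
\end{enumerate}
The delicate part is ensuring the regularized solutions exist on a common interval $[0,T]$ and converge in $C([0,T],H^{2})$. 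This follows since $T$ depends only on $\|u_{0}^{\varepsilon}\|_{H^{2}}\le C\|u_{0}\|_{H^{2}}$.
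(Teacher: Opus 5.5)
Your construction is the paper's: a contraction for the Duhamel map in Strichartz norms at the $\dot H^{2}$ level via Corollary~\ref{sobolev stricartz}, with the nonlinearity placed in $N(I)$. The other ingredients also match: a factor $T^{\theta}$ from subcriticality, the difference estimate at the $L^{2}$ level, and conservation laws by regularization (the paper just cites \cite{Caz}, Chapter~3).

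\textbf{The nonlinear estimate fails for small $p$.} The estimate
\[
\|\nabla(|u|^{p-1}u)\|_{L^{2}_{T}L^{\frac{2n}{n+2}}}\lesssim T^{\theta}\|u\|_{X_{T}}^{p}
\]
is false for $1<p<1+\frac{2}{n}$, whatever exponents you choose. Nothing in $X_{T}$ controls $u$ below $L^{2}$, so H\"older can at best place $|u|^{p-1}\nabla u$ in $L^{2/p}$. Reaching $L^{\frac{2n}{n+2}}$ therefore requires $\frac{p}{2}\ge\frac{n+2}{2n}$.

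Concretely, take $u(t)\equiv\sum_{k}c_{k}\phi(\cdot-x_{k})$ with $\phi\in C_{0}^{\infty}$ and the translates disjointly supported. This $u$ lies in $X_{T}$ whenever $(c_{k})\in\ell^{2}$. On the other hand, $\||u|^{p-1}\nabla u\|_{L^{s}}^{s}=C_{\phi}\sum_{k}|c_{k}|^{ps}$ with $s=\frac{2n}{n+2}$, and this can diverge for $(c_{k})\in\ell^{2}$ exactly when $ps<2$.

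The paper's proof has a larger version of the same limitation. The pair $\bigl(\frac{8(p-1)}{(n-4)(p-1)-4},\frac{n(p-1)}{2p}\bigr)$ in \eqref{exist:sub:3} is B-admissible only for $p\ge\frac{n}{n-4}$; below that, its time exponent is negative. So neither argument covers the full range $p\in(1,2^{\sharp}-1)$.

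The range splits as follows:
\begin{itemize}
\item For $1+\frac{2}{n}\le p\le\frac{n}{n-4}$, you can close using only the $L^{\infty}_{T}H^{2}$ component, gaining $T^{1/2}$. Write $\||u|^{p-1}\nabla u\|_{L^{\frac{2n}{n+2}}}\le\|u\|_{L^{a}}^{p-1}\|\nabla u\|_{L^{b}}$ with $\frac{p-1}{a}+\frac{1}{b}=\frac{n+2}{2n}$, $a\in[2,2^{\sharp}]$, $b\in[2,\frac{2n}{n-2}]$; such $a,b$ exist exactly in this range.
\item For $p\ge\frac{n}{n-4}$, use a suitable Strichartz pair as the paper does.
\item For $1<p<1+\frac{2}{n}$, a different mechanism is needed. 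One option is the weak-solution (compactness) method of \cite{Caz}, Chapter~3, combined with your $L^{2}$-level uniqueness estimate, which does hold for every $p>1$.
\end{itemize}

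\textbf{A smaller point in the conservation step.} The contraction gives continuous dependence only in the weak ($L^{2}$-level) metric. A common existence time does not by itself give $u^{\varepsilon}\to u$ in $C([0,T],H^{2})$. You do not need it, though:
\begin{itemize}
\item Convergence $u^{\varepsilon}(t)\to u(t)$ in $L^{2}$, with uniform $H^{2}$ bounds, gives convergence in $L^{p+1}$ and weak convergence in $H^{2}$.
\item Hence $M(u(t))=M(u_{0})$, and $E(u(t))\le\liminf_{\varepsilon\to0}E(u^{\varepsilon}(t))=E(u_{0})$ by weak lower semicontinuity of $\|\Delta u\|_{L^{2}}^{2}$. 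The $V$-term converges since $V\in L^{\infty}$.
\item The reverse inequality follows by running the same argument backward from time $t$ and using uniqueness.
\end{itemize}
Likewise, the claim $\partial_{t}u^{\varepsilon}\in C([0,T],L^{2})$ on the common interval needs Kato's argument. Here $w=\partial_{t}u^{\varepsilon}$ solves a linear equation with coefficient $|u^{\varepsilon}|^{p-1}$, handled by the same $L^{2}$-level estimate as your difference bound.
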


\begin{proof}
Let $M = C\|u_{0}\|_{\dot{H}^{2}}$, and define the mapping $\Phi$ by
\[
\Phi(u) = e^{itH}u_{0} - i\int_{0}^{t}e^{i(t-s)H}|u|^{p-1}u(s)\, ds.
\]
Next, define the complete metric space $X_{M}$ by
\[
X_{M} = \left\{ u \in C(I, H^{2}) ;\ \|\Delta u \|_{S(I)} \leq 2M \right\}.
\]
Here,
\[
\|u\|_{S(I)} := \sup_{\substack{(q, r):\text{B-admissible} \\ r < n/2}}\|u\|_{L^{q}(I,L^{r})},
\]
and the distance on $X_M$ is defined by this norm.
By the Strichartz estimate \eqref{strichartz}, together with H\"{o}lder's inequality and Sobolev's inequality, we have for any $u \in X_{M}$,
\begin{equation}\label{exist:sub:1}
\begin{aligned}
\|\Delta \Phi(u) \|_{S(I)}
&\leq C\|u_{0}\|_{\dot{H}^{2}} + C\|u\|^{p - 1}_{L^{2(p-1)}(I, L^{\frac{n(p-1)}{2}})}\| \nabla  u\|_{L^{\infty}(I, L^{\frac{2n}{n-2}})} \\
&\leq C\|u_{0}\|_{\dot{H}^{2}} + C\|u\|^{p - 1}_{L^{2(p-1)}(I, L^{\frac{n(p-1)}{2}})}\| \Delta  u\|_{L^{\infty}(I, L^{2})}.
\end{aligned}
\end{equation}
Also, by Sobolev's inequality and H\"{o}lder's inequality, for $1 < p < 2^{\sharp} - 1$, we obtain
\begin{equation}\label{exist:sub:3}
\|u\|_{L^{2(p-1)}(I, L^{\frac{n(p-1)}{2}})} \leq C T^{\frac{1}{(p-1)} - \frac{n - 4}{8}} \|\Delta u \|_{L^{\frac{8(p-1)}{(n-4)(p-1) - 4}}(I, L^{\frac{n(p-1)}{2p}})}.
\end{equation}
From \eqref{exist:sub:1} and \eqref{exist:sub:3}, setting
\[
\theta = \frac{1}{(p-1)} - \frac{n - 4}{8} > 0,
\]
we obtain
\[
\| \Delta  \Phi(u) \|_{S(I)} \leq C\|u_{0}\|_{\dot{H}^{2}} + CT^{\theta}\| \Delta  u\|^{p}_{S(I)} \leq M + CT^{\theta}M^{p}.
\]
Therefore, if we choose $T$ sufficiently small so that $CT^{\theta}M^{p} \leq M$, namely,
\[
T \leq CM^{\frac{1-p}{\theta}},
\]
then $\Phi$ maps $X_{M}$ into itself.
Furthermore, for any $u, v \in X_{M}$, the following holds:
\begin{align*}
\| \Phi(u) - \Phi(v) \|_{S(I)}
&\lesssim \|(|u|^{p-1} + |v|^{p-1})|u-v|\|_{L^{2}(I, L^{\frac{2n}{n+4}})} \\
&\lesssim \left(\|u\|^{p - 1}_{L^{2(p-1)}(I, L^{\frac{n(p-1)}{2}})} + \|v\|^{p - 1}_{L^{2(p-1)}(I, L^{\frac{n(p-1)}{2}})}\right)\| u -v \|_{L^{\infty}(I, L^{2})}\\
&\leq CT^{\theta}\left( \| \Delta u\|^{p - 1}_{S(I)} + \|\Delta  v\|^{p - 1}_{S(I)} \right) \|u - v\|_{S(I)}\\
&\leq CT^{\theta}M^{p-1} \|u - v\|_{S(I)}.
\end{align*}
Thus, by taking $T$ sufficiently small, $\Phi$ becomes a contraction mapping on $X_M$.
The conservation of mass and energy follows from Chapter 3 of \cite{Caz}.
\end{proof}

\begin{corollary}
Let $n \geq 5$, $p \in (1, 2^{\sharp} -1)$, and assume that $V \geq 0$ and $u_{0} \in H^{2}$. Let $u$ be the solution to \eqref{eq:NL4S} with initial data $u_{0}$.
Then the solution $u$ can be extended globally in time on $\mathbb{R}$ in each of the following cases:
\begin{enumerate}
\item[(a)] \ \ $\lambda \geq 0$,
\item[(b)] \ \ $\lambda < 0,\ p < 1 + \frac{8}{n}$,
\item[(c)] \ \ $\lambda < 0,\ p = 1 + \frac{8}{n}$ and the $L^{2}$-norm of $u_{0}$ is sufficiently small,
\item[(d)] \ \ $\lambda < 0$ and the $H^{2}$-norm of $u_{0}$ is sufficiently small.
\end{enumerate}
\end{corollary}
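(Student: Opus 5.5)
The plan is the standard blow-up alternative combined with a priori $H^{2}$ bounds from the conservation laws \eqref{mass energy con}. From the local existence proposition, the existence time satisfies $T \gtrsim \|u_{0}\|_{\dot H^{2}}^{-(p-1)/\theta}$, so $T$ depends only on an upper bound for $\|u_{0}\|_{H^{2}}$. Together with uniqueness and time-reversibility (the same contraction argument works backwards in time), this gives the usual alternative: either the maximal solution is global, or $\|u(t)\|_{H^{2}} \to \infty$ as $t$ approaches a finite endpoint. It therefore suffices, in each of the cases (a)--(d), to prove $\sup_{t}\|u(t)\|_{H^{2}} \le C(u_{0})$ on the lifespan. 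Mass conservation already controls $\|u(t)\|_{L^{2}} = \|u_{0}\|_{L^{2}}$, so only $\|\Delta u(t)\|_{L^{2}}$ needs a bound.

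\textbf{Case (a).} Since $V \ge 0$ and $\lambda \ge 0$, every term in \eqref{eq:energy} is nonnegative. Hence $\|\Delta u(t)\|_{L^{2}}^{2} \le 2E(u(t)) = 2E(u_{0})$, and $E(u_{0})<\infty$ because $V \in L^{\infty}$ and $H^{2} \subset L^{p+1}$.

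\textbf{Cases (b)--(d), $\lambda<0$.} Again using $V\ge 0$, we get
\[
\|\Delta u\|_{L^{2}}^{2} \le 2E(u_{0}) + \frac{2|\lambda|}{p+1}\|u\|_{L^{p+1}}^{p+1}.
\]
The potential term is simply dropped. We bound the last term with the Gagliardo--Nirenberg inequality
\[
\|u\|_{L^{p+1}}^{p+1} \le C_{GN}\|\Delta u\|_{L^{2}}^{\frac{n(p-1)}{4}}\|u\|_{L^{2}}^{p+1-\frac{n(p-1)}{4}}.
\]
Write $y(t)=\|\Delta u(t)\|_{L^{2}}^{2}$ and $\alpha = n(p-1)/8$. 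Then
\[
y \le 2E(u_{0}) + C\|u_{0}\|_{L^{2}}^{p+1-2\alpha}\, y^{\alpha}.
\]
\begin{itemize}
\item In case (b), $\alpha<1$, so Young's inequality absorbs the last term and bounds $y$ uniformly.
\item In case (c), $\alpha=1$, and the coefficient $C\|u_{0}\|_{L^{2}}^{8/n}$ is less than $1$ when $\|u_{0}\|_{L^{2}}$ is small, so the term can be absorbed.
\end{itemize}

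The main obstacle is case (d), where $\alpha>1$ and no absorption is possible. Here I would use a continuity (bootstrap) argument. Set $f(y)=y-Ay^{\alpha}$ with $A = C\|u_{0}\|_{L^{2}}^{p+1-2\alpha}$. The inequality reads $f(y(t)) \le 2E(u_{0})$, where $2E(u_{0}) \lesssim \|u_{0}\|_{H^{2}}^{2}$ is small when $\|u_{0}\|_{H^{2}}$ is small. The function $f$ increases on $[0,y_{*}]$ to a positive maximum $f(y_{*})$, which stays bounded below as the small data shrink. So if $2E(u_{0})<f(y_{*})$ and $y(0)<y_{*}$, both guaranteed by smallness of $\|u_{0}\|_{H^{2}}$, then the continuity of $t\mapsto y(t)$ (from $u\in C(I,H^{2})$) prevents $y(t)$ from crossing $y_{*}$. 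Hence $y(t)$ remains in a compact interval on the whole lifespan.

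In every case the $H^{2}$ norm stays bounded, so the local solution can be iterated with a uniform time step to reach all of $\mathbb{R}$.
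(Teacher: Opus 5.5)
Your proposal is correct and takes essentially the same route as the paper. You reduce global existence to a uniform $\dot H^{2}$ bound via the local existence time. You then get that bound from mass and energy conservation: positivity of the energy in (a), and the Gagliardo--Nirenberg inequality $y \le 2E(u_{0}) + C\|u_{0}\|_{L^{2}}^{p+1-2\alpha}y^{\alpha}$ in (b)--(d).

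Your version is more explicit than the paper's in two places. In (b) and (c) you absorb the nonlinear term directly, whereas the paper invokes the bootstrap principle, which is unnecessary there. In (d) your continuity argument with $f(y)=y-Ay^{\alpha}$ spells out what the paper only asserts; the sub-case $p\le 1+\frac{8}{n}$ of (d) is already covered by (b) and (c).
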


\begin{proof}
It suffices to show that $\|u(t)\|_{\dot{H}^{2}}$ is uniformly bounded in $t$.
Case (a) follows easily from the conservation of energy.
Cases (b) and (c) follow from the Gagliardo--Nirenberg inequality together with the conservation of mass and energy.
Indeed,
\begin{equation}\label{sub:global:1}
\begin{aligned}
\int_{\mathbb{R}^{n}}|\Delta u(t, x) |^{2} \, dx
&\leq E(u_{0}) - \int_{\mathbb{R}^{n}} V(x)|u(x)|^{2}\,dx - \dfrac{2\lambda}{p + 1}\int_{\mathbb{R}^{n}}|u(t, x)|^{p + 1}\,dx \\
&\leq E(u_{0}) + C\|u_{0}\|_{L^{2}}^{p + 1 - \frac{n(p-1)}{4}}\|\Delta u\|_{L^{2}}^{\frac{n(p-1)}{4}}
\end{aligned}
\end{equation}
holds. Hence, by applying Appendix \ref{bootstrap}, we obtain the desired uniform boundedness in cases (b) and (c).
Finally, we prove (d).
If $\|u_{0}\|_{H^{2}}$ is taken sufficiently small, then we see that $E(u_{0}) > 0$.
Moreover, since $p > 1 + \frac{8}{n}$, we have
\[
\frac{n(p-1)}{4} > 2.
\]
Therefore, it follows from \eqref{sub:global:1} that the boundedness is obtained by taking $\|u_{0}\|_{L^{2}}$ sufficiently small.
\end{proof}

\section{Local existence in the critical case}

In this section, we consider local solutions to \eqref{eq:NL4S} in the energy-critical case, namely when $p = 2^{\sharp} - 1$.
If $u\in C(I, H^{2})$ is a solution to \eqref{eq:NL4S} in the energy-critical case with initial data $u_{0}$, then it is represented as

\begin{equation}\label{duhamel2}
u(t) = e^{itH}u_{0} + i\lambda \int_{0}^{t} e^{i(t - s)H}|u(s)|^{\frac{8}{n - 4}}u(s) ds.
\end{equation}

The following uniqueness statement can be obtained in the same way as in Chapter 4 of \cite{Caz}.

\begin{proposition}[Uniqueness in $H^{2}$]
Let $n \geq 5$, and assume that $V$ satisfies Assumption \ref{as:strichartz}. Let $u_{0} \in H^{2}$. Suppose that $u_{1}, u_{2} \in C(I, H^{2})$ are two solutions to \eqref{eq:NL4S} with the same initial data $u_{0}$. Then $u_{1} = u_{2}$.
\end{proposition}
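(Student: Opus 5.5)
We compare the two solutions through their Duhamel representations and close an estimate on a short interval in a Strichartz norm that needs no derivatives. Set $w = u_{1} - u_{2}$. By \eqref{duhamel2},
\[
w(t) = i\lambda\int_{0}^{t} e^{i(t-s)H}\bigl(|u_{1}|^{\frac{8}{n-4}}u_{1} - |u_{2}|^{\frac{8}{n-4}}u_{2}\bigr)(s)\,ds .
\]
By symmetry in time and a continuation argument, it suffices to show that $w \equiv 0$ on $J = [0,T]$ for some small $T>0$. The set of times where $u_{1}=u_{2}$ is closed by continuity, so the same argument restarted from any coincidence time shows it is open, and therefore it is all of $I$.

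\textbf{Step 1 (Strichartz bound for $w$).} Apply the standard Strichartz estimate \eqref{standard} of Proposition \ref{prop:strichartz} at the $L^{2}$ level with the B-admissible pair $(2, \frac{2n}{n-4})$ on both sides. This gives
\[
\|w\|_{L^{2}(J, L^{\frac{2n}{n-4}})} \lesssim \bigl\|(|u_{1}|^{\frac{8}{n-4}}+|u_{2}|^{\frac{8}{n-4}})|w|\bigr\|_{L^{2}(J, L^{\frac{2n}{n+4}})} .
\]
By H\"{o}lder in space with $\frac{n+4}{2n} = \frac{4}{n} + \frac{n-4}{2n}$, the right-hand side is bounded by
\[
\sup_{t\in J}\bigl(\|u_{1}(t)\|_{L^{2^{\sharp}}}^{\frac{8}{n-4}}+\|u_{2}(t)\|_{L^{2^{\sharp}}}^{\frac{8}{n-4}}\bigr)\,\|w\|_{L^{2}(J,L^{\frac{2n}{n-4}})} .
\]
This alone does not close, because the coefficient is not small. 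The sup-in-time bound carries no smallness as $T\to0$, and this is the main obstacle of the critical case.

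\textbf{Step 2 (extracting smallness).} I would use the continuity $u_{j}\in C(I,H^{2})\subset C(I,L^{2^{\sharp}})$, following Cazenave Ch.~4. Write $u_{j}(t) = u_{0} + (u_{j}(t)-u_{0})$ and decompose $u_{0} = a + b$, with $a\in L^{\infty}\cap L^{2^{\sharp}}$ and $\|b\|_{L^{2^{\sharp}}}\le\varepsilon$.

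For $T$ small, $\sup_{J}\|u_{j}(t)-u_{0}\|_{L^{2^{\sharp}}}\le\varepsilon$ by continuity. The corresponding pieces of the nonlinearity then contribute at most $C\varepsilon^{8/(n-4)}\|w\|_{L^{2}L^{\frac{2n}{n-4}}}$, which can be absorbed.

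The bounded piece $|a|^{\frac{8}{n-4}}|w|$ is estimated differently. I put it in the dual endpoint $L^{1}(J,L^{2})$, which is allowed since $(\infty,2)$ is B-admissible. This gives $\|a\|_{L^{\infty}}^{\frac{8}{n-4}}\,T\,\|w\|_{L^{\infty}(J,L^{2})}$. Since $w\in C(I,L^{2})$, this forces me to estimate the pair of norms
\[
\|w\|_{L^{\infty}(J,L^{2})}+\|w\|_{L^{2}(J,L^{\frac{2n}{n-4}})}
\]
together. Strichartz controls this sum by the same right-hand side, with the dual norm split as $L^{2}L^{\frac{2n}{n+4}} + L^{1}L^{2}$.

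\textbf{Step 3 (conclusion).} Choose $\varepsilon$ first and then $T$. This gives
\[
X \le \bigl(C\varepsilon^{\frac{8}{n-4}} + C_{a}T\bigr)X,\qquad X := \|w\|_{L^{\infty}(J,L^{2})}+\|w\|_{L^{2}(J,L^{\frac{2n}{n-4}})} .
\]
Hence $X=0$ once the bracket is less than $1$.

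We must also check that $X<\infty$ a priori. This holds because $w\in C(J,H^{2})$ and Sobolev embedding give $w \in L^{\infty}(J, L^{\frac{2n}{n-4}})$, hence $w\in L^{2}(J,L^{\frac{2n}{n-4}})$ on a bounded interval. The pointwise splitting of $|u_{j}|^{\frac{8}{n-4}}$ uses $|x+y|^{\alpha}\lesssim|x|^{\alpha}+|y|^{\alpha}$, which is harmless.
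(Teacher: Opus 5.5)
Your proposal is correct and is essentially the argument the paper defers to (Chapter 4 of Cazenave): a derivative-free Strichartz estimate for $w$ in $L^{\infty}L^{2}\cap L^{2}L^{\frac{2n}{n-4}}$. Time continuity in $L^{2^{\sharp}}$ splits $|u_{j}|^{\frac{8}{n-4}}$ into a part small in $L^{\frac{n}{4}}$, which is absorbed, and a bounded part placed in the dual norm $L^{1}L^{2}$, which gains a factor $T$; a continuation argument then covers all of $I$. The one detail you leave implicit is writing $f(u_{1})-f(u_{2})$ itself as a sum of two pieces dominated by the two respective bounds (for instance, multiply by $G/(G+B)$ and $B/(G+B)$, where $G$ and $B$ are the bounded and small weights), and this step is routine.
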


\begin{proposition}\label{local existence}
Assume that $V$ satisfies Assumptions \ref{as:equ} and \ref{as:strichartz}. Let $n \geq 5$, and let $u_{0} \in H^{2}$ be arbitrary. For any interval $I = [0, T]$, there exists $\delta > 0$ such that, if
\begin{equation}\label{small}
\|e^{itH}u_{0}\|_{W(I)} < \delta,
\end{equation}
then there exists a unique solution $u\in C(I, H^{2})$ to \eqref{eq:NL4S} with initial data $u_{0}$, and moreover
$u\in M(I) \cap L^{\frac{2(n + 4)}{n}}(I \times \mathbb{R}^{n})$.
In addition, the mass and energy are conserved in the sense of \eqref{mass energy con}. Furthermore,
\begin{equation}\label{st}
\begin{array}{l}
\|u\|_{W(I)} < 2 \delta, \\
\|u\|_{M(I)} + \|u\|_{L^{\infty}(I, H^{2})} \leq C\left(\|u_{0}\|_{H^{2}} + \delta^{\frac{n + 4}{n - 4}}\right)
\end{array}
\end{equation}
holds for some constant $C > 0$ independent of $u_{0}$.
Moreover, depending on $\delta$, there exists $\delta_{0}$ such that for any $\delta_{1} \in (0, \delta_{0})$, if $\|u_{0} - v_{0}\|_{H^{2}} < \delta_{1}$ and $v$ denotes the solution to \eqref{eq:NL4S} with initial data $v_{0}$, then $v$ is defined on $I$ and, for any B-admissible pair $(q, r)$,
\begin{equation}\label{initconti}
\|u - v\|_{L^{q}(I, L^{r})} \leq C\delta_{1}
\end{equation}
holds, where $C > 0$ is independent of $u_{0}, v_{0}$.
\end{proposition}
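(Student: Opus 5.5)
The plan is to run a contraction argument, modeled on Pausader's treatment of the case $V=0$, using the Strichartz estimate \eqref{strichartz} together with the Remark following Corollary \ref{sobolev stricartz}, which says $\|u\|_{W(I)}\lesssim\|u\|_{M(I)}\lesssim\|u_0\|_{\dot H^2}+\|h\|_{N(I)}$ for $u$ given by \eqref{eq:duhamel}.

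\textbf{Step 1: the nonlinear estimate.} With $F(u)=|u|^{\frac{8}{n-4}}u$, I will check the exponents so that H\"older's inequality gives
\[
\|\nabla F(u)\|_{L^2(I,L^{\frac{2n}{n+2}})}\lesssim \|u\|_{Z(I)}^{\frac{8}{n-4}}\|u\|_{W(I)},
\]
together with the Sobolev embedding $\|u\|_{Z(I)}\lesssim\|u\|_{W(I)}$. The time exponents match, since $\frac{8}{n-4}\cdot\frac{n-4}{2(n+4)}+\frac{n-4}{2(n+4)}=\frac12$, and the space exponents should add up to $\frac{n+2}{2n}$. This yields $\|F(u)\|_{N(I)}\lesssim\|u\|_{W(I)}^{\frac{n+4}{n-4}}$. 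For differences I will use the analogous bound
\[
\|\nabla(F(u)-F(v))\|_{L^2L^{\frac{2n}{n+2}}}\lesssim (\|u\|_W+\|v\|_W)^{\frac{8}{n-4}}\|u-v\|_W+\cdots.
\]
When $\frac{8}{n-4}<1$, i.e. $n\ge 13$, the derivative of $F$ is only H\"older continuous. I therefore plan to contract in a weaker metric, such as $L^{\frac{2(n+4)}{n-4}}$-type or $L^2$-based Strichartz norms without derivatives, inside a ball that is bounded in $W$ and $M$. This is the standard Cazenave approach.

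\textbf{Step 2: the fixed point.} I will define $\Phi(u)=e^{itH}u_0+i\lambda\int_0^te^{i(t-s)H}F(u)\,ds$ on
\[
X=\{u:\ \|u\|_{W(I)}\le 2\delta,\ \|u\|_{M(I)}+\|u\|_{L^\infty H^2}\le 2C\|u_0\|_{H^2}+\cdots\}.
\]
The $\dot H^2$-level bounds come from Strichartz and Step 1, giving $\|\Phi(u)\|_{W}\le\delta+C(2\delta)^{\frac{n+4}{n-4}}<2\delta$ for small $\delta$. The $L^2$ level uses the standard estimate \eqref{standard} with the dual pair: $\|F(u)\|_{L^2L^{\frac{2n}{n+4}}}$ is bounded by $\|u\|_{Z}^{\frac{8}{n-4}}$ times an $L^2$-admissible norm of $u$. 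The contraction in the weak metric also comes from \eqref{standard}. Uniqueness is given by the preceding proposition. The extra integrability $u\in L^{\frac{2(n+4)}{n}}(I\times\mathbb R^n)$ follows because $(\frac{2(n+4)}{n},\frac{2(n+4)}{n})$ is B-admissible, so this norm is controlled by \eqref{standard}. Conservation of mass and energy follows by approximating with subcritical solutions or smooth data, as in \cite{Caz}, using the previous section's proposition.

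\textbf{Step 3: stability.} For $\|u_0-v_0\|_{H^2}<\delta_1$, I will note that
\[
\|e^{itH}v_0\|_{W}\le\|e^{itH}u_0\|_W+C\delta_1<2\delta
\]
after shrinking. So the construction, run with $2\delta$ in place of $\delta$, applies to $v$ on $I$. Then I will subtract the Duhamel formulas and apply \eqref{standard} to $u-v$. The nonlinear difference is absorbed because $\|u\|_{Z},\|v\|_Z$ are small, which gives \eqref{initconti} for every B-admissible pair.

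The main obstacle is the difference estimate at the derivative level when $n$ is large, where $F$ is not $C^2$. That is why I will do the contraction and the stability estimate \eqref{initconti} only at the $L^2$-Strichartz level (no derivatives), which is exactly what \eqref{initconti} asks for.
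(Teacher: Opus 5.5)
Your proposal is correct and follows essentially the paper's route: a fixed point on a set bounded in $W(I)$ and in an $L^2$-level Strichartz norm, contracted in a derivative-free $L^2$-admissible metric (the paper uses $L^{\frac{2(n+4)}{n}}(I\times\mathbb{R}^n)$), with \eqref{standard} giving both the contraction and \eqref{initconti} after absorbing the $\delta^{\frac{8}{n-4}}$ factor. Of the two metrics you mention, only the $L^2$-level one works directly: the $Z$-norm sits at $\dot{H}^2$ scaling and is not controlled by \eqref{standard}. Your explicit check that $\|e^{itH}v_0\|_{W(I)}\le\delta+C\delta_1$ supplies the existence of $v$ on $I$, which the paper leaves implicit.
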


\begin{proof}
For $u \in W(I)$, let $\Phi_{u_{0}}(u)$ denote the right-hand side of \eqref{duhamel2}. Let $M = C\|u_{0}\|_{L^{2}}$, and for sufficiently small $\delta > 0$, define
\[
X_{M, \delta} = \left\{ v \in M(I) ; \|v\|_{W(I)} < 2\delta,\ \|v\|_{L^{\frac{2(n + 4)}{n}}(I, L^{\frac{2(n + 4)}{n}})} \leq 2M \right\}.
\]
We equip $X_{M, \delta}$ with the metric induced by the norm
$L^{\frac{2(n + 4)}{n}}(I, L^{\frac{2(n + 4)}{n}})$.
We show that $\Phi_{u_{0}}$ is a contraction mapping on $X_{M, \delta}$.

Using the Strichartz estimates \eqref{standard} and \eqref{strichartz}, we obtain
\begin{align*}
\|\Phi_{u_{0}}(u)\|_{W(I)}
&\leq \|e^{itH}u_{0}\|_{W(I)} + C\|\nabla h\|_{L^{2}(I, L^{\frac{2n}{n + 2}})} \\
&\leq \delta + C\|u\|_{W(I)}^{\frac{n+4}{n-4}} \\
&\leq \delta + C\delta^{\frac{n+4}{n-4}} \leq 2\delta,
\end{align*}
and
\begin{align*}
\|\Phi_{u_{0}}(u)\|_{L^{\frac{2(n + 4)}{n}}(I, L^{\frac{2(n + 4)}{n}})}
&\lesssim \|u_{0}\|_{L^{2}} + \|u\|_{Z(I)}^{\frac{8}{n-4}} \|u\|_{L^{\frac{2(n + 4)}{n}}(I, L^{\frac{2(n + 4)}{n}})} \\
&\leq M + \delta^{\frac{8}{n - 4}}M \leq 2M.
\end{align*}
Hence $\Phi_{u_{0}} : X_{M, \delta} \to X_{M, \delta}$.

Next, we show that $\Phi_{u_{0}}$ is a contraction:
\begin{align*}
\|\Phi_{u_{0}}(u) - \Phi_{u_{0}}(v)\|_{L^{\frac{2(n + 4)}{n}}(I, L^{\frac{2(n + 4)}{n}})}
&\lesssim \| |u|^{\frac{8}{n-4}}u - |v|^{\frac{8}{n-4}}v \|_{L^{\frac{2(n + 4)}{n + 8}}(I, L^{\frac{2(n + 4)}{n + 8}})} \\
&\lesssim \| (|u|^{\frac{8}{n-4}} + |v|^{\frac{8}{n-4}} ) | u - v | \|_{L^{\frac{2(n + 4)}{n + 8}}(I, L^{\frac{2(n + 4)}{n + 8}})} \\
&\lesssim (\|u\|_{Z(I)}^{\frac{8}{n-4}} + \|v\|_{Z(I)}^{\frac{8}{n-4}})\|u - v \|_{L^{\frac{2(n + 4)}{n}}(I, L^{\frac{2(n + 4)}{n}})} \\
&\lesssim \delta^{\frac{8}{n-4}}\|u - v\|_{L^{\frac{2(n + 4)}{n}}(I, L^{\frac{2(n + 4)}{n}})}.
\end{align*}
Therefore, by the contraction mapping principle, there exists a unique solution $u$ in $X_{M,\delta}$.

We next prove \eqref{initconti}. Let $(q, r)$ be any B-admissible pair. By the Strichartz estimate \eqref{standard},
\begin{equation}\label{eq:6-1}
\|u - v\|_{L^{q}(I, L^{r})} \lesssim \|u_{0} - v_{0}\|_{H^{2}} + C\delta^{\frac{8}{n-4}}\|u - v\|_{L^{\frac{2(n + 4)}{n}}(I, L^{\frac{2(n + 4)}{n}})}.
\end{equation}
Taking $(q, r) = \left(2(n+4)/n, 2(n+4)/n\right)$, we obtain
\[
\|u - v\|_{L^{\frac{2(n + 4)}{n}}(I, L^{\frac{2(n + 4)}{n}})} \leq C\|u_{0} - v_{0}\|_{H^{2}}.
\]
Substituting this into \eqref{eq:6-1} yields \eqref{initconti}.
Finally, \eqref{st} follows by applying the Strichartz estimates once more.
The conservation of mass and energy follows from \cite{Caz}.
\end{proof}

\begin{corollary}\label{global solution for small energy}
Let $n \geq 5$ and $p = 2^{\sharp} - 1$, and assume that $V$ satisfies Assumptions \ref{as:equ} and \ref{as:strichartz}. Then there exists $\varepsilon_{0} > 0$ such that, for any $u_{0} \in H^{2}$ satisfying $\mathcal{E}(u_{0}) \leq \varepsilon_{0}$, there exists a unique global solution $u\in C(\mathbb{R}, H^{2})$ to \eqref{eq:NL4S} with initial data $u_{0}$.
\end{corollary}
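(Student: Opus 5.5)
The plan is to combine the local theory of Proposition \ref{local existence} with the a priori control of $\mathcal{E}(u(t))$ coming from energy conservation, for the defocusing sign $\lambda>0$ that is in force throughout this section. First we make the smallness condition \eqref{small} global in time. By the Remark following Corollary \ref{sobolev stricartz}, the homogeneous Strichartz estimate \eqref{strichartz} with $h=0$ gives
\[
\|e^{itH}u_{0}\|_{W(\mathbb{R})} \lesssim \|e^{itH}u_0\|_{M(\mathbb{R})}\lesssim \|\Delta u_{0}\|_{L^{2}} = \mathcal{E}(u_0)^{1/2}.
\]
This bound is uniform in the interval, so choosing $\varepsilon_0$ with $C\varepsilon_0^{1/2}<\delta$ makes \eqref{small} hold on every interval $I=[0,T]$, and also on $[-T,0]$ by time reversal.

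Second, we apply the contraction argument of Proposition \ref{local existence} on $I=[0,T]$ for arbitrary $T$. The crucial observation is that the contraction constant there involves only $\delta$ and $\|u_0\|_{L^2}$, and not the length of $I$. Hence, for each $T$, we obtain a solution $u\in C([0,T],H^2)$ with $\|u\|_{W([0,T])}<2\delta$ and with the bound \eqref{st}. Uniqueness in $H^2$ then shows that the solutions on $[0,T]$ and $[0,T']$ agree on their common interval. Gluing over $T\to\infty$, and doing the same for negative times, produces $u\in C(\mathbb{R},H^2)$; uniqueness again gives uniqueness of the global solution.

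The main obstacle is checking that the constants in Proposition \ref{local existence}, namely $\delta$ and the Strichartz constants, really are independent of $T$. The estimates \eqref{standard} and \eqref{strichartz} are global in time, so $\delta$ depends only on $n$ and $V$, but this has to be verified in the proof of that proposition. If this independence fails, we would instead use an iteration argument. By conservation of energy, $\mathcal{E}(u(t))\le 2E(u_0)$, using $V\ge0$ and $\lambda>0$. Under Assumption \ref{as:equ} with $V\in L^{n/4}$ small, Hölder and Sobolev give $\int V|u_0|^2\lesssim \|V\|_{L^{n/4}}\mathcal{E}(u_0)$, and Sobolev gives $\|u_0\|_{L^{2^\sharp}}^{2^\sharp}\lesssim \mathcal{E}(u_0)^{2^\sharp/2}$. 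Together these yield $E(u_0)\lesssim \mathcal{E}(u_0)$ for small $\mathcal{E}(u_0)$, so $\mathcal{E}(u(t))$ stays below a fixed multiple of $\varepsilon_0$ for all $t$. We would then restart the local result from $u(t_0)$ on unit-length intervals, with the same $\delta$ at each step. Mass conservation controls the $L^2$ part of the $H^2$ norm, so the $H^2$ norm cannot blow up and the solution extends to all of $\mathbb{R}$.
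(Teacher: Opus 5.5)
Your proof is correct, but your main route differs from the paper's. The paper never runs the fixed point on a long interval. It restarts Proposition \ref{local existence} on unit intervals $[t_0,t_0+1]$, checking \eqref{small} at each restart from the smallness of $\|u(t_0)\|_{\dot{H}^{2}}$ via \eqref{strichartz}. It keeps $\|u(t)\|_{\dot{H}^{2}}$ small on the whole lifespan by energy conservation: $V\geq 0$ lets it drop the potential term at time $t$, and the weak-type H\"older bound $\int V|u_0|^{2}\,dx\lesssim \|V\|_{L^{\frac{n}{4},\infty}}\|u_0\|_{\dot{H}^{2}}^{2}$ with Sobolev bounds $E(u_0)$. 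This gives $\|u(t)\|_{\dot{H}^{2}}^{2}\lesssim \mathcal{E}(u_0)+\mathcal{E}(u_0)^{2^{\sharp}/2}+\|u(t)\|_{\dot{H}^{2}}^{2^{\sharp}}$, which it closes by a continuity argument.

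You instead use that the linear flow is globally small, $\|e^{itH}u_0\|_{W(\mathbb{R})}\lesssim\mathcal{E}(u_0)^{1/2}$. You combine this with the fact that the $\delta$ and contraction constants in Proposition \ref{local existence} come only from the global-in-time Strichartz estimates, which its proof does confirm. So the fixed point runs on $[0,T]$ for every $T$, and uniqueness glues the pieces. Your route needs neither energy conservation nor the sign of $\lambda$. It also yields the global spacetime bound $\|u\|_{W(\mathbb{R})}<2\delta$, hence a finite $Z$-norm, which is exactly what the later scattering argument consumes. The paper's route only gives a uniform-in-time $\dot{H}^{2}$ bound.

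One correction: $\lambda>0$ is not assumed in this corollary; the paper's estimate carries $|\lambda|$ and so covers both signs. Your fallback, which obtains $\mathcal{E}(u(t))\le 2E(u_0)$ by discarding the potential and nonlinear terms with $V\geq0$ and $\lambda>0$, therefore only covers the defocusing case. For $\lambda<0$ you would need the bootstrap on the inequality above. Since your primary argument is sign-independent, this does not leave a gap.
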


\begin{proof}
Assume that $u$ exists on $[0, t_{0}]$ and that the $\dot{H}^{2}$-norm of $u(t_{0})$ is sufficiently small. Then, by the Strichartz estimate \eqref{strichartz}, condition \eqref{small} holds at $t=t_{0}$, and hence $u$ can be extended to $[t_{0}, t_{0}+1]$.

Therefore, to prove global existence, it suffices to show that $\|u\|_{\dot{H}^{2}}$ remains small on the interval of existence.
By Sobolev's inequality and the weak-type H\"older inequality \eqref{weak holder},
\begin{equation}\label{estimate v}
\int_{\mathbb{R}^{n}} V(x) |u(x)|^{2} dx \lesssim \|V\|_{L^{\frac{n}{4}, \infty}}\|u(t)\|_{L^{2^{\sharp}}}^{2} \lesssim \|V\|_{L^{\frac{n}{4}, \infty}}\|u(t)\|_{\dot{H}^{2}}^{2}.
\end{equation}
Moreover, by \eqref{estimate v}, conservation of energy, and Sobolev's inequality,
\begin{equation}
E(u(t)) = E(u_{0}) \leq C(\mathcal{E}(u_{0}) + \mathcal{E}(u_{0})^{2^{\sharp}/2}).
\end{equation}
Since $V \geq 0$, we have
\begin{equation}\label{small energy}
\begin{aligned}
\|u(t)\|^{2}_{\dot{H^{2}}}
&= 2E(u(t)) - \int_{\mathbb{R}^{n}} V(x) |u(x)|^{2} dx - \int_{\mathbb{R}^{n}} \dfrac{n-4}{n}\lambda |u(x)|^{2^{\sharp}} dx\\
&\leq 2E(u_{0}) + |\lambda|\dfrac{n-4}{n}\|u(t)\|_{L^{2^{\sharp}}}^{2^{\sharp}} \\
&\lesssim \mathcal{E}(u_{0}) + \mathcal{E}(u_{0})^{2^{\sharp}/2} + \|u(t)\|_{\dot{H^{2}}}^{2^{\sharp}}.
\end{aligned}
\end{equation}
It follows from \eqref{small energy} that, if $\varepsilon_{0}$ is sufficiently small, then $u$ remains small in the $\dot{H}^{2}$-norm.
\end{proof}

\begin{proposition}\label{criterion}
Assume that $V$ satisfies Assumptions \ref{as:equ} and \ref{as:strichartz}. Let $n \geq 5$ and $p = 2^{\sharp} - 1$.
Let $u\in C([0, T), H^{2})$ be a solution to \eqref{eq:NL4S} satisfying $\|u\|_{Z([0, T])} < +\infty$.
Then there exists a constant $K = K(\|u_{0}\|_{H^{2}}, \|u\|_{Z([0, T])})$ such that
\begin{equation}\label{criterion:con:1}
\|u\|_{L^{\frac{2(n + 4)}{n}}([0, T], L^{\frac{2(n + 4)}{n}})} + \|u\|_{L^{\infty}([0, T], \dot{H^{2}})} + \|u\|_{M([0, T])} \leq K,
\end{equation}
and $u$ can be extended to a solution $\tilde{u}\in C([0, \tilde{T}), H^{2})$ of \eqref{eq:NL4S} for some $\tilde{T} > T$.
\end{proposition}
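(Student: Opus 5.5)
The plan is the standard blow-up criterion argument: split $[0,T)$ into finitely many intervals on which the $Z$-norm is small. On each such interval we bootstrap Strichartz to control the $M$, $\dot H^2$ and $L^{\frac{2(n+4)}{n}}$ norms, and then we extend past $T$ using Proposition \ref{local existence}.

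\emph{Step 1 (a priori bounds).} Fix a small $\eta>0$ to be chosen. Partition $[0,T)$ into $J$ consecutive intervals $I_j=[t_j,t_{j+1})$ with $\|u\|_{Z(I_j)}\le\eta$. This needs $J\lesssim 1+(\|u\|_{Z}/\eta)^{\frac{2(n+4)}{n-4}}$ intervals.

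On $I_j$ we use Duhamel from $t_j$, Corollary \ref{sobolev stricartz} and the remark after it. Writing the nonlinearity as $F(u)=|u|^{\frac{8}{n-4}}u$, we have $|\nabla F(u)|\lesssim |u|^{\frac{8}{n-4}}|\nabla u|$. H\"older in space-time then gives
\[
\|\nabla F(u)\|_{L^2(I_j,L^{\frac{2n}{n+2}})}\lesssim \|u\|_{Z(I_j)}^{\frac{8}{n-4}}\|u\|_{W(I_j)} .
\]
The exponents should check: $\frac{8}{n-4}\cdot\frac{n-4}{2(n+4)}+\frac{n-4}{2(n+4)}=\frac12$ in time, and similarly in space. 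Since $\|u\|_W\lesssim\|u\|_M$, we obtain
\[
\|u\|_{M(I_j)}+\|u\|_{L^\infty(I_j,\dot H^2)}\le C\|u(t_j)\|_{\dot H^2}+C\eta^{\frac{8}{n-4}}\|u\|_{M(I_j)} .
\]
Choosing $\eta$ so that $C\eta^{8/(n-4)}\le\frac12$ absorbs the last term. A technical point is that $\|u\|_{M(I_j)}$ must be known to be finite before absorbing. To handle this, we run the argument on compact subintervals $[t_j,t]$, where finiteness comes from the local theory, and use continuity in $t$.

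\emph{Step 2 (iteration).} The bound from Step 1 yields $\|u(t_{j+1})\|_{\dot H^2}\le 2C\|u(t_j)\|_{\dot H^2}$. Iterating over the $J$ intervals gives $\|u\|_{L^\infty\dot H^2}+\|u\|_{M}\le (2C)^J\|u_0\|_{H^2}$. Mass conservation bounds the $L^2$ part. The $L^{\frac{2(n+4)}{n}}$ norm is handled the same way via the standard Strichartz estimate \eqref{standard}:
\[
\|u\|_{L^{\frac{2(n+4)}{n}}(I_j)}\lesssim\|u(t_j)\|_{L^2}+\|u\|_{Z(I_j)}^{\frac{8}{n-4}}\|u\|_{L^{\frac{2(n+4)}{n}}(I_j)} ,
\]
and the same absorption applies. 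This produces $K$ depending only on $\|u_0\|_{H^2}$ and $\|u\|_{Z}$.

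\emph{Step 3 (extension).} This is the main obstacle: the smallness condition \eqref{small} must be verified at a time $t_*$ close to $T$ with a $\delta$ uniform in $t_*$. Write
\[
e^{i(t-t_*)H}u(t_*)=u(t)-i\lambda\int_{t_*}^t e^{i(t-s)H}F(u)\,ds .
\]
Then on $[t_*,T)$ we get
\[
\|e^{i(t-t_*)H}u(t_*)\|_{W([t_*,T))}\le \|u\|_{W([t_*,T))}+C\|u\|_{Z([t_*,T))}^{\frac{8}{n-4}}\|u\|_{W([t_*,T))} ,
\]
and this tends to $0$ as $t_*\to T$, since $\|u\|_{M([0,T))}\le K<\infty$. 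On $[T,T+\epsilon]$, Strichartz bounds the free evolution by $\|e^{i(t-t_*)H}u(t_*)\|_{W([T,T+\epsilon])}$. Because $\|u(t_*)\|_{\dot H^2}\le K$ uniformly, this can be made small by taking $\epsilon$ small, provided the contribution is chosen with a little room.

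The delicate point is that the smallness on $[T,T+\epsilon]$ must be uniform in $t_*$. I would argue this by writing $u(t_*)=e^{i(t_*-t_1)H}u(t_1)+(\text{Duhamel piece})$ for a fixed $t_1<T$. The first term gives $e^{i(t-t_1)H}u(t_1)$, a single fixed function whose $W$-norm over $[t_1,T+\epsilon]$ is finite and hence small on short intervals. The Duhamel piece is bounded by $C\|u\|_{Z([t_1,T))}^{8/(n-4)}K$, which is small. Hence \eqref{small} holds on $[t_*,T+\epsilon]$. Proposition \ref{local existence} then produces a solution on this interval, and by uniqueness in $H^2$ it glues with $u$ to give $\tilde u$ on $[0,\tilde T)$ with $\tilde T=T+\epsilon>T$.
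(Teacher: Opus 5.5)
Your proposal is correct and follows essentially the same route as the paper's proof. Both split $[0,T)$ into finitely many intervals of small $Z$-norm, use Strichartz absorption and iteration to bound the $M$, $L^{\infty}\dot H^{2}$ and $L^{2(n+4)/n}$ norms, and extend past $T$ via Proposition \ref{local existence} after showing $\|e^{i(t-t_*)H}u(t_*)\|_{W([t_*,T))}\to 0$. Your extra argument on $[T,T+\epsilon]$ spells out a step the paper only asserts. Uniformity in $t_*$ is not actually needed there: you can fix $t_*$ first and then choose $\epsilon$, because $e^{i(t-t_*)H}u(t_*)$ is a single function in $W(\mathbb{R})$. The only uniformity required is that the $\delta$ of Proposition \ref{local existence} not depend on the initial time, which it does not.
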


\begin{proof}
Take a small $\eta > 0$, and let $B = \|u\|_{Z([0, T])}$. For $x \geq 0$, let $[x]$ denote the largest integer not exceeding $x$.
Set
\[
N = \left[ (B/\eta)^{\frac{2(n+4)}{n-4}} \right] + 1.
\]
Then the interval $[0, T]$ can be decomposed into $N$ subintervals $I_{j}$, $j=1,\dots,N$, such that
\[
\|u\|_{Z(I_{j})} = \eta \quad (1 \leq j \leq N - 1), \qquad \|u\|_{Z(I_{N})} \leq \eta.
\]

Applying the Strichartz estimate \eqref{strichartz} on $I_{j} = [t_{j}, t_{j+1}]$, we obtain for any $t \in I_{j}$,
\begin{equation}\label{criterion:0}
\begin{aligned}
\|u\|_{M([t_{j}, t])}
&\leq C\|u(t_{j})\|_{\dot{H}^{2}} + C\|u\|_{Z(I_{j})}^{\frac{8}{n-4}}\|u\|_{M([t_{j}, t])} \\
&\leq C\|u(t_{j})\|_{\dot{H}^{2}} + C\eta^{\frac{8}{n-4}}\|u\|_{M([t_{j}, t])}.
\end{aligned}
\end{equation}
Also, applying \eqref{standard} on $I_{j}$ and using conservation of mass, we get
\begin{equation}\label{criterion:1}
\begin{aligned}
\|u\|_{L^{\frac{2(n+4)}{n}}([t_{j}, t], \ L^{\frac{2(n+4)}{n}})}
&\leq C\|u(t_{j})\|_{L^{2}} + C\|u\|_{Z([t_{j},\ t])}^{\frac{8}{n-4}}\|u\|_{L^{\frac{2(n+4)}{n}}([t_{j}, t], \ L^{\frac{2(n+4)}{n}})} \\
&\leq C\|u_{0}\|_{L^{2}} + C\eta^{\frac{8}{n-4}}\|u\|_{L^{\frac{2(n+4)}{n}}([t_{j}, t], \ L^{\frac{2(n+4)}{n}})}.
\end{aligned}
\end{equation}
If $\eta$ is chosen sufficiently small, \eqref{criterion:0} and \eqref{criterion:1} imply
\begin{align*}
\|u\|_{M(I_{j})} &\leq C\|u(t_{j})\|_{\dot{H}^{2}}, \\
\|u\|_{L^{\frac{2(n+4)}{n}}(I_{j}, \ L^{\frac{2(n+4)}{n}})} &\leq C\|u_{0}\|_{L^{2}}.
\end{align*}
Moreover, by \eqref{strichartz},
\[
\|u\|_{L^{\infty}(I_{j},\ \dot{H}^{2})} \leq C\|u(t_{j})\|_{\dot{H}^{2}},
\]
and in particular,
\[
\|u(t_{j+1})\|_{\dot{H}^{2}} \leq C\|u(t_{j})\|_{\dot{H}^{2}}.
\]
Thus, for every $j$,
\begin{equation}\label{criterion:2}
\begin{aligned}
&\|u\|_{L^{\frac{2(n+4)}{n}}([0, T], \ L^{\frac{2(n+4)}{n}})} \leq N^{\frac{n}{2(n+4)}}C\|u_{0}\|_{L^{2}}, \\
&\|u\|_{L^{\infty}([0,T],\ \dot{H}^{2})} \leq C^{N}\|u_{0}\|_{\dot{H}^{2}}, \\
&\|u\|_{M([0,T])} \leq C^{N}\|u_{0}\|_{\dot{H}^{2}}.
\end{aligned}
\end{equation}
Hence \eqref{criterion:con:1} follows from \eqref{criterion:2}.

Now take $t_{0} \in I_{N}$. By Duhamel's formula and the Strichartz estimate \eqref{strichartz}, for any $t$,
\begin{equation}\label{criterion:3}
\begin{aligned}
\|e^{i(t-t_{0})H}u(t_{0})\|_{W([t_{0}, T])}
&\leq \|u\|_{W([t_{0}, T])} + C\||u|^{\frac{8}{n-4}}u\|_{N([t_{0}, T])} \\
&\leq \|u\|_{W([t_{0}, T])} + C\|u\|_{W([t_{0}, T])}^{\frac{n+4}{n-4}}.
\end{aligned}
\end{equation}
Since the $W([t_{0}, T])$-norm of $u$ is finite, it can be made arbitrarily small by taking $t_{0}\to T$.
Therefore the left-hand side of \eqref{criterion:3} tends to $0$ as $t_{0}\to T$.
Hence, for any $\delta > 0$, there exist $t_{1} \in (0, T)$ and $T' > T$ such that $u(t_{1}) \in H^{2}$ and
\begin{equation}\label{criterion:4}
\|e^{i(t-t_{1})H}u(t_{1})\|_{W([t_{1}, T'])} \leq \delta.
\end{equation}
Then, by Proposition \ref{local existence}, there exists a solution $v \in C([t_{1}, T'], H^{2})$ of \eqref{eq:NL4S} with $v(t_{1}) = u(t_{1})$.
By uniqueness, $u = v$ on $[t_{1}, T)$, and thus $u$ can be extended to $[0, T']$.
\end{proof}

\section{Stability in the critical case}

In this section we discuss the stability of solutions.

\begin{proposition}[Short time perturbation]\label{Short time perturbation}
Assume that $V$ satisfies Assumptions \ref{as:wave} and \ref{as:equ}. Let $n \geq 5$ and $p = 2^{\sharp} - 1$.
Let $I \subset \mathbb{R}$ be a compact time interval containing $0$, and let $\tilde{u}$ be an approximate solution to \eqref{eq:NL4S} on $I \times \mathbb{R}^{n}$, that is, suppose there exists $e \in N(I)$ such that
\begin{equation}\label{approximate NL4S}
i\partial_{t}\tilde{u} + \Delta^{2}\tilde{u} + V\tilde{u} + \lambda|\tilde{u}|^{\frac{8}{n-4}}\tilde{u} = e.
\end{equation}
Assume also the energy bound
\begin{equation}\label{energy bound}
\|\tilde{u}\|_{L^{\infty}(I, \dot{H^{2}})} \leq E
\end{equation}
for some $E >0$.
Let $u_{0} \in H^{2}$ be close to $\tilde{u}(0)$ in the sense that
\begin{equation}\label{close to initial data}
\|u_{0} - \tilde{u}(0) \|_{\dot{H}^{2}} \leq E'
\end{equation}
for some $E' >0$.
Assume further the smallness conditions
\begin{align}
\|\tilde{u}\|_{W(I)} \leq \varepsilon_{0}, \label{small condition 1}\\
\|e^{itH}\left(u_{0} - \tilde{u}(0)\right)\|_{W(I)} \leq \varepsilon, \label{small condition 2}\\
\|e\|_{N(I)} \leq \varepsilon, \label{small condition 3}
\end{align}
for some $0 \leq \varepsilon \leq \varepsilon_{0}$, where $\varepsilon_{0} = \varepsilon_{0}(E, E') > 0$ is sufficiently small.
Then there exists a solution $u\in C(I, H^{2})$ to \eqref{eq:NL4S} with initial data $u_{0}$ such that
\begin{align}
\|u - \tilde{u}\|_{W(I)} &\lesssim \varepsilon + \varepsilon^{\frac{15}{(n-4)^{3}}}, \label{short stability conclusion 1}\\
\|u - \tilde{u}\|_{L^{q}(I, \dot{H}^{2, r})} &\lesssim E' + \varepsilon  + \varepsilon^{\frac{15}{(n-4)^{3}}},\label{short stability conclusion 2}\\
\|u\|_{L^{q}(I, \dot{H}^{2, r})} &\lesssim E + E', \label{short stability conclusion 3}\\
\|(i\partial_{t} + \Delta^{2} + V)(u - \tilde{u}) + e\|_{N(I)} &\lesssim \varepsilon + \varepsilon^{\frac{15}{(n-4)^{3}}}\label{short stability conclusion 4}
\end{align}
for any B-admissible pair $(q, r)$ with $r < \dfrac{n}{2}$.
\end{proposition}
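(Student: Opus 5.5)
We will follow the standard short-time perturbation argument of Pausader \cite{Pau} and Tao--Visan, with the free propagator replaced by $e^{itH}$. The inputs are the Strichartz estimate \eqref{strichartz}, which is valid here by the remark after Proposition \ref{prop:lp-lq} under Assumptions \ref{as:wave} and \ref{as:equ}, and the bound $\|u\|_{W(I)}\lesssim\|u\|_{M(I)}\lesssim\|u_0\|_{\dot H^2}+\|h\|_{N(I)}$.

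Set $w=u-\tilde u$. By local existence (Proposition \ref{local existence}, extended by Proposition \ref{criterion}), it suffices to prove a priori bounds for $w$ on any subinterval $[0,t]\subset I$ on which $u$ exists. The function $w$ satisfies
\[ i\partial_t w+\Delta^2 w+Vw+\lambda\bigl(|\tilde u+w|^{\frac{8}{n-4}}(\tilde u+w)-|\tilde u|^{\frac{8}{n-4}}\tilde u\bigr)+e=0, \]
so by Duhamel
\[ w(t)=e^{itH}w(0)+i\int_0^t e^{i(t-s)H}\bigl(\lambda\,\delta F(s)+e(s)\bigr)\,ds , \]
where $\delta F$ denotes the nonlinear difference above. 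Define $S(t)=\|\delta F\|_{N([0,t])}$.

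\textbf{Step 1: control of $w$ in $W$.} Applying \eqref{strichartz} together with \eqref{small condition 2} and \eqref{small condition 3} gives $\|w\|_{W([0,t])}\lesssim \varepsilon+S(t)$.

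\textbf{Step 2: control of the nonlinear difference.} The key estimate is a nonlinear bound
\[ S(t)\lesssim \bigl(\|\tilde u\|_{W}+\|w\|_{W}\bigr)^{\frac{8}{n-4}}\|w\|_{W}, \]
taken over $[0,t]$. To prove it, expand $\nabla\delta F$ into terms of the form $O(|\tilde u|^{\frac{8}{n-4}})\nabla w$ and $O(|w|^{\frac{8}{n-4}})\nabla\tilde u$, plus a term with $\nabla\tilde u$ multiplied by the difference of derivatives of the nonlinearity. Then use H\"older in $L^2_tL^{\frac{2n}{n+2}}_x$ with the exponents of $W$ and $Z$, and Sobolev $\|f\|_{Z}\lesssim\|f\|_{W}$.

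This is the main obstacle. When $n>12$ the power $\frac{8}{n-4}<1$, so $F'$ is only H\"older continuous. The difference term $\bigl(F'(\tilde u+w)-F'(\tilde u)\bigr)\nabla\tilde u$ then only gives $|w|^{\frac{8}{n-4}}|\nabla\tilde u|$ and yields a fractional power of $\|w\|_W$. This is why the statement carries the odd exponent $\varepsilon^{15/(n-4)^3}$. For that term we would use Pausader's approach: interpolate $\|\nabla\tilde u\|$ between the $W$ norm (which is $\le\varepsilon_0$) and the $M$-type bound supplied by the energy bound $E$. We would accept the resulting sublinear power, first trying $S\lesssim \varepsilon_0^{8/(n-4)}\|w\|_W+\varepsilon_0^{a}\|w\|_W^{b}$ with $b<1$, and tracking exponents until they produce $\varepsilon^{15/(n-4)^3}$.

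\textbf{Step 3: continuity argument.} Combining Steps 1 and 2 gives
\[ S(t)\lesssim(\varepsilon_0+\varepsilon+S)^{\frac{8}{n-4}}(\varepsilon+S)+\text{(sublinear term)} . \]
Since $S$ is continuous in $t$ and $S(0)=0$, the continuity method yields $S(t)\lesssim\varepsilon+\varepsilon^{15/(n-4)^3}$ on $I$ for $\varepsilon_0$ small. This bound gives \eqref{short stability conclusion 4}, and inserted into Step 1 it gives \eqref{short stability conclusion 1}.

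\textbf{Step 4: the remaining bounds.} Apply \eqref{strichartz} to the Duhamel formula for $w$, using $\|w(0)\|_{\dot H^2}\le E'$; this gives \eqref{short stability conclusion 2}. For \eqref{short stability conclusion 3}, first bound $\tilde u$ in $L^q\dot H^{2,r}$. To do so, apply \eqref{strichartz} to $\tilde u$ with the energy bound \eqref{energy bound}, and control its nonlinearity by $\|\tilde u\|_{W}^{\frac{8}{n-4}}\|\tilde u\|_{M}\le\varepsilon_0^{\frac{8}{n-4}}\|\tilde u\|_M$, absorbing this term for small $\varepsilon_0$; this gives $\|\tilde u\|_{M}\lesssim E+\varepsilon$. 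Adding the estimate \eqref{short stability conclusion 2} for $w$ then gives \eqref{short stability conclusion 3}.
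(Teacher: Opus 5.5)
For $5\le n\le 12$ your Steps 1--4 match the paper's argument. For $n\ge 13$ there is a genuine gap at exactly the point you flag, and tracking exponents cannot close it. Suppose you have any bound of the form $S\lesssim\varepsilon_0^{8/(n-4)}\|w\|_{W}+\varepsilon_0^{a}\|w\|_{W}^{b}$ with $b<1$. Combined with $\|w\|_W\lesssim\varepsilon+S$, it gives $S\le C\varepsilon+C\varepsilon_0^{8/(n-4)}(\varepsilon+S)+C\varepsilon_0^{a}(\varepsilon+S)^{b}$. This inequality holds for every $S\in[0,s^{*}]$ with $s^{*}\sim\varepsilon_0^{a/(1-b)}$, even when $\varepsilon=0$. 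The admissible set therefore has no gap separating a small branch from a large one. The continuity argument from $S(0)=0$ then only yields $S\lesssim\varepsilon_0^{a/(1-b)}$, which does not vanish as $\varepsilon\to 0$; this is the same non-uniqueness as for $y'=y^{b}$. Interpolating $\nabla\tilde u$ between $W$ and an $M$-type norm only changes the coefficient $\varepsilon_0^{a}$, not the sublinear power of $w$. Yet an $\varepsilon$-dependent bound in \eqref{short stability conclusion 1} is precisely what Proposition \ref{prop:long time} needs.

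The missing idea is to close the bootstrap first in a weaker norm in which the nonlinear difference is \emph{linear} in $w$.

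\begin{itemize}
\item The paper introduces $X(I)$ and $Y(I)$, which carry only $\frac{8n}{n^{2}-16}<\frac{8}{n-4}$ derivatives.
\item It proves the exotic Strichartz estimate $\|\int_0^t e^{i(t-s)H}F(s)\,ds\|_{X(I)}\lesssim\|F\|_{Y(I)}$. This uses the dispersive bound \eqref{lplq for H} (which is where the wave operators of Assumption \ref{as:wave} are needed), Hardy--Littlewood--Sobolev, and the norm equivalence \eqref{eq1}--\eqref{eq2}.
\item The fractional chain rule for the H\"older function $f_z$ then gives $\|f(\tilde u+w)-f(\tilde u)\|_{Y(I)}\lesssim\bigl(\|\tilde u\|_{W(I)}^{8/(n-4)}+\|u\|_{W(I)}^{8/(n-4)}\bigr)\|w\|_{X(I)}$.
\item The free term satisfies $\|e^{itH}w(0)\|_{X(I)}\lesssim(E')^{\theta}\varepsilon^{1-\theta}$ with $1-\theta=\frac{2}{(n-4)^{2}}$, by Gagliardo--Nirenberg between the $\dot H^{2}$ Strichartz bound and \eqref{small condition 2}.
\item Together with $\|u\|_{W(I)}\lesssim\varepsilon_0$ from Proposition \ref{local existence}, the bootstrap closes linearly to $\|w\|_{X(I)}\lesssim(E')^{\theta}\varepsilon^{1-\theta}$.
\end{itemize}

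Only after this is the sublinear term in the $N$-norm estimated. It is paired with a non-$W$ Strichartz norm of $\nabla\tilde u$ bounded by $E$, giving $E\,\|w\|_{X(I)}^{8/(n-4)}\lesssim E(E')^{\frac{8\theta}{n-4}}\varepsilon^{\frac{16}{(n-4)^{3}}}$. This is a quantity already known to be small, not one to bootstrap. The remaining linear term $\varepsilon_0^{8/(n-4)}\|w\|_{W(I)}$ is absorbed, and this is where $\varepsilon^{15/(n-4)^{3}}$ comes from.
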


\begin{proof}
We first treat the case $5 \leq n \leq 12$.
Let $v := u - \tilde{u}$ and $f(u) = |u|^{\frac{8}{n - 4}}u$, where $u$ is the solution to \eqref{eq:NL4S} with initial data $u_{0}$ and maximal lifespan $I'$.
Then on $I \cap I'$, equation \eqref{approximate NL4S} implies that $v$ satisfies
\[
\left\{
\begin{aligned}
&i\partial_{t}v + \Delta^{2}v + Vv + f(\tilde{u} + v) - f(\tilde{u}) = e, \\
&v(0, x) = u_{0}(x) - \tilde{u}(0, x).
\end{aligned}
\right.
\]
For $T \in I \cap I'$, define
\[
S(T) := \|f(\tilde{u} + v) - f(\tilde{u}) \|_{N([0, T])}.
\]
By the Strichartz estimate \eqref{strichartz} and \eqref{small condition 2}, \eqref{small condition 3},
\begin{equation}\label{es:w norm of v}
\begin{aligned}
\|v\|_{W([0, T])}
&\lesssim \|e^{itH}v(0)\|_{W([0, T])} + \|f(\tilde{u} + v) - f(\tilde{u}) \|_{N([0, T])} + \|e\|_{N([0, T])} \\
&\lesssim S(T) + \varepsilon.
\end{aligned}
\end{equation}
Hence
\begin{equation}\label{es:z norm of v}
 \|v\|_{Z([0, T])} \lesssim S(T) + \varepsilon.
\end{equation}

On the other hand,
\begin{align*}
\nabla \left[f(\tilde{u} + v) - f(\tilde{u})\right]
= &\, f_{z}(\tilde{u} + v)\nabla(\tilde{u} + v) + f_{\bar{z}}(\tilde{u} + v)\overline{\nabla(\tilde{u} + v)} \\
&-f_{z}(\tilde{u})\nabla\tilde{u} + f_{\bar{z}}(\tilde{u})\overline{\nabla\tilde{u}},
\end{align*}
where
\[
f_{z} := \dfrac{1}{2}\left(\dfrac{\partial f}{\partial x} -i \dfrac{\partial f}{\partial y}\right),\qquad
f_{\bar{z}} := \dfrac{1}{2}\left(\dfrac{\partial f}{\partial x} +i \dfrac{\partial f}{\partial y}\right).
\]
Thus,
\begin{equation}\label{nonlinear with delivative}
\begin{aligned}
\left|\nabla \left[f(\tilde{u} + v) - f(\tilde{u})\right]\right|
\lesssim\ &|\nabla \tilde{u}||v|^{\frac{8}{n-4}} + |\nabla v||\tilde{u} + v|^{\frac{8}{n-4}}.
\end{aligned}
\end{equation}
Therefore, by \eqref{small condition 1}, \eqref{es:w norm of v}, and \eqref{es:z norm of v},
\begin{equation}
\begin{aligned}
S(T)
&\lesssim \| \nabla \tilde{u}|v|^{\frac{8}{n-4}}\|_{N([0,T])}
+ \|\nabla v |\tilde{u}|^{\frac{8}{n-4}}\|_{N([0,T])}
+ \|\nabla v|v|^{\frac{8}{n-4}}\|_{N([0,T])} \\
&\lesssim \|\tilde{u}\|_{W([0,T])}\|v\|_{Z([0,T])}^{\frac{8}{n-4}}
+ \|v\|_{W([0,T])}\|\tilde{u}\|_{Z([0,T])}^{\frac{8}{n-4}}
+ \|v\|_{W([0,T])}^{\frac{n+4}{n-4}} \\
&\lesssim \varepsilon_{0}(S(T) + \varepsilon)^{\frac{8}{n-4}}
+ (S(T) + \varepsilon)\varepsilon_{0}^{\frac{8}{n-4}}
+ (S(T) + \varepsilon)^{\frac{n + 4}{n-4}}.
\end{aligned}
\end{equation}
Since $5 \leq n \leq 12$, we have $\frac{8}{n - 4} \geq 1$. Hence, by Appendix \ref{bootstrap}, if $\varepsilon_{0} = \varepsilon_{0}(E, E')$ is chosen sufficiently small,
\begin{equation}\label{es:S(T)}
S(T) \lesssim \varepsilon
\end{equation}
for any $T \in I \cap I'$.
Thus, by \eqref{es:w norm of v} and \eqref{es:S(T)},
\[
 \|u - \tilde{u}\|_{W(I \cap I')} \lesssim \varepsilon.
\]
By Proposition \ref{criterion}, this implies $I \cap I' = I$, and therefore \eqref{short stability conclusion 1} follows.

Next, let $(q, r)$ be any B-admissible pair with $r < \frac{n}{2}$. By \eqref{strichartz}, \eqref{close to initial data}, and \eqref{small condition 3},
\begin{align*}
\|u - \tilde{u}\|_{L^{q}(I, \dot{H}^{2, r})}
&\lesssim \|u_{0} - \tilde{u}(0)\|_{\dot{H}^{2}} + \|f(\tilde{u} + v) - f(\tilde{u}) \|_{N(I)} + \|e\|_{N(I)} \\
&\lesssim E' + \varepsilon,
\end{align*}
which yields \eqref{short stability conclusion 2}. Finally, to prove \eqref{short stability conclusion 3}, the Strichartz estimate \eqref{strichartz}, together with \eqref{energy bound} and \eqref{close to initial data}, gives
\begin{align*}
\|u \|_{L^{q}(I, \dot{H}^{2, r})}
&\lesssim \|u_{0} - \tilde{u}(0)\|_{\dot{H}^{2}} + \|\tilde{u}(0)\|_{\dot{H}^{2}} + \|u\|_{W(I)}^{\frac{n + 4}{n - 4}} \\
&\lesssim E' + \|\tilde{u}\|_{L^{\infty}(I, \dot{H^{2}})} + \varepsilon_{0}^{\frac{n + 4}{n - 4}}
\lesssim E' + E.
\end{align*}

Now suppose $n > 12$. Define the norms
\begin{align*}
\|u\|_{X(I)} &= \||\nabla|^{\frac{8n}{n^{2} - 16}}u\|_{L^{\frac{n^{2} - 16}{8}}(I, L^{\frac{2(n+4)}{n}})}, \\
\|u\|_{Y(I)} &= \||\nabla|^{\frac{8n}{n^{2} - 16}}u\|_{L^{\frac{n^{2} - 16}{4(n-2)}}(I, L^{\frac{2(n+4)}{n + 8}})}.
\end{align*}

\begin{lemma}[Exotic Strichartz estimate]
For any $F \in Y(I)$,
\begin{equation}\label{exotic strichartz}
\|\int_{0}^{t}e^{i(t - s)H}F(s) ds \|_{X(I)} \lesssim \|F\|_{Y(I)}.
\end{equation}
\begin{proof}
By the $L^{p}$-$L^{q}$ estimate \eqref{lplq for H} and the Hardy--Littlewood--Sobolev inequality,
\begin{equation}\label{exotic calc}
\begin{aligned}
\|\int_{0}^{t}e^{i(t - s)H}F(s) ds \|_{{L^{\frac{n^{2} - 16}{8}}(I, L^{\frac{2(n+4)}{n}})}}
&\lesssim \|\int_{0}^{t}|t - s|^{-\frac{n}{n+4}}\|F\|_{L^{\frac{2(n+4)}{n+8}}_{x}}ds \|_{L^{\frac{n^{2}- 16}{8}}_{t}} \\
&\lesssim \|F\|_{L^{\frac{n^{2} - 16}{4(n-2)}}(I, L^{\frac{2(n+4)}{n + 8}})}.
\end{aligned}
\end{equation}
Since $\frac{2(n+4)}{n} < \frac{n}{2}$ for $n \geq 13$, it follows from \eqref{exotic calc}, \eqref{eq1}, and \eqref{eq2} that
\begin{align*}
\|\int_{0}^{t}e^{i(t - s)H}F(s) ds \|_{X(I)}
&\lesssim \|\int_{0}^{t}e^{i(t - s)H}H^{\frac{2n}{n^{2} - 16}}F(s) ds \|_{{L^{\frac{n^{2} - 16}{8}}(I, L^{\frac{2(n+4)}{n}})}} \\
&\lesssim \|H^{\frac{2n}{n^{2} - 16}}F\|_{L^{\frac{n^{2} - 16}{4(n-2)}}(I, L^{\frac{2(n+4)}{n + 8}})} \\
&\lesssim \|F\|_{Y(I)}.
\end{align*}
\end{proof}
\end{lemma}

We continue the proof of Proposition \ref{Short time perturbation}. By \eqref{exotic strichartz}, for $v = u - \tilde{u}$,
\begin{align}\label{x-y calc}
\|v\|_{X(I)}
\lesssim \|e^{itH}v(0)\|_{X(I)} + \|f(\tilde{u} + v) - f(\tilde{u})\|_{Y(I)} + \|\int_{0}^{t}e^{i(t - s)H}e(s) ds \|_{X(I)}.
\end{align}

We first estimate the first term on the right-hand side of \eqref{x-y calc}. By the Gagliardo--Nirenberg and H\"older inequalities,
\begin{align*}
\|e^{itH}v(0)\|_{X(I)}
&\lesssim \left\| \|\Delta e^{itH}v(0)\|_{L^{r}_{x}}^{\theta} \|e^{itH}v(0)\|_{L^{\frac{2(n+4)}{n-4}}_{x}}^{1 - \theta} \right\|_{L^{\frac{n^{2} - 16}{8}}_{t}} \\
&\lesssim \|\Delta e^{itH}v(0) \|_{L^{b}(I, L^{r})}^{\theta} \|e^{itH}v(0)\|_{Z(I)}^{1 - \theta},
\end{align*}
where
\[
\theta = \dfrac{n^{2} -8n +14}{(n - 4)^{2}},\qquad
b = \dfrac{2(n + 4)(n^{2} - 8n + 14)}{14(n - 4)},\qquad
\dfrac{1}{r} = \dfrac{1}{2} - \dfrac{4}{n}\cdot\dfrac{1}{b}.
\]
Hence, by the Strichartz estimate \eqref{strichartz}, \eqref{close to initial data}, and \eqref{small condition 2},
\begin{equation}\label{stablity:1st}
\begin{aligned}
\|\Delta e^{itH}v(0) \|_{L^{b}(I, L^{r})}^{\theta} \|e^{itH}v(0)\|_{Z(I)}^{1 - \theta}
&\lesssim \|v(0)\|_{\dot{H}^{2}}^{\theta}\|e^{itH}v(0)\|_{W(I)}^{1 - \theta} \\
&\lesssim (E')^{\theta}\varepsilon^{1 - \theta}.
\end{aligned}
\end{equation}

Next we estimate the second term on the right-hand side of \eqref{x-y calc}. For this purpose we prove the following lemma.

\begin{lemma}[Nonlinear estimate]
\begin{equation}\label{nonliner estimate}
\|f_{z}(v)u\|_{Y(I)} \lesssim \|v\|_{W(I)}\|u\|_{X(I)}.
\end{equation}
\end{lemma}

\begin{proof}
Using Appendix \ref{Product rule} and Sobolev's inequality, we obtain
\begin{equation}\label{nonliner:1}
\begin{aligned}
\||\nabla|^{\frac{8n}{n^{2}-16}}(f_{z}(v)u))\|_{L^{\frac{2(n+4)}{n+8}}}
&\lesssim \||\nabla|^{\frac{8n}{n^{2}-16}}f_{z}(v)\|_{L^{\frac{n^{2} - 16}{4(n-2)}}}\|u\|_{L^{\frac{2(n^{2}-16)}{n^{2}-4n-16}}} \\
&\quad + \|f_{z}(v)\|_{L^{\frac{n+4}{4}}}\||\nabla|^{\frac{8n}{n^{2}-16}}u\|_{L^{\frac{2(n+4)}{n}}} \\
&\lesssim \||\nabla|^{\frac{8n}{n^{2}-16}}f_{z}(v)\|_{L^{\frac{n^{2} - 16}{4(n-2)}}}\||\nabla|^{\frac{8n}{n^{2}-16}}u\|_{L^{\frac{2(n+4)}{n}}} \\
&\quad + \|v\|_{L^{\frac{2(n+4)}{n-4}}}^{\frac{8}{n-4}}\||\nabla|^{\frac{8n}{n^{2}-16}}u\|_{L^{\frac{2(n+4)}{n}}}.
\end{aligned}
\end{equation}
Next, using Appendix \ref{chain}, H\"older, and Sobolev, we have
\begin{equation}\label{nonliner:2}
\begin{aligned}
\||\nabla|^{\frac{8n}{n^{2}-16}}f_{z}(v)\|_{L^{\frac{n^{2} - 16}{4(n-2)}}}
&\lesssim \||v|^{\frac{16}{(n+2)(n-4)}}\|_{L^{\frac{(n+2)(n-4)}{8}}}\||\nabla|^{\frac{n+2}{n+4}}v\|_{L^{\frac{2n(n+4)}{n^{2}-2n+4}}}^{\frac{8n}{(n+2)(n-4)}}\\
&\lesssim \|v\|_{L^{\frac{2(n+4)}{n-4}}}^{\frac{16}{(n+2)(n-4)}}\|\nabla v\|_{L^{\frac{2n(n+4)}{n^{2}-2n+8}}}^{\frac{8n}{(n+2)(n-4)}} \\
&\lesssim \|\nabla v\|_{L^{\frac{2n(n+4)}{n^{2}-2n+8}}}^{\frac{8}{n-4}}.
\end{aligned}
\end{equation}
Combining \eqref{nonliner:1} and \eqref{nonliner:2}, we obtain
\[
\||\nabla|^{\frac{8n}{n^{2}-16}}(f_{z}(v)u))\|_{L^{\frac{2(n+4)}{n+8}}}
\lesssim  \|\nabla v\|_{L^{\frac{2n(n+4)}{n^{2}-2n+8}}}^{\frac{8}{n-4}}\||\nabla|^{\frac{8n}{n^{2}-16}}u\|_{L^{\frac{2(n+4)}{n}}}.
\]
Applying H\"older in time gives \eqref{nonliner estimate}.
\end{proof}

Continuing the proof,
\[
 f(\tilde{u} + v) - f(\tilde{u}) = \int_{0}^{1}[f_{z}(\tilde{u} + \theta v)v + f_{\bar{z}}(\tilde{u} + \theta v)\bar{v}]d\theta.
\]
By Minkowski's inequality and \eqref{nonliner estimate},
\begin{equation}\label{estimate f with y norm}
\|f(\tilde{u} + v) - f(\tilde{u})\|_{Y(I)} \lesssim (\|\tilde{u}\|^{\frac{8}{n - 4}}_{W(I)} + \|u\|^{\frac{8}{n - 4}}_{W(I)} )\|v\|_{X(I)}.
\end{equation}
On the other hand, by the Strichartz estimate \eqref{strichartz} together with \eqref{small condition 1}, \eqref{small condition 2}, and \eqref{small condition 3},
\begin{align*}
\|e^{itH}u_{0}\|_{W(I)}
&\leq \|e^{itH}v(0)\|_{W(I)} + \|e^{itH}\tilde{u}(0) \|_{W(I)} \\
&\lesssim \varepsilon + \|\tilde{u}\|_{W(I)} + \|\tilde{u}\|_{W(I)}^{\frac{n + 4}{n - 4}} + \|e\|_{N(I)} \\
&\lesssim \varepsilon + \varepsilon_{0} + \varepsilon_{0}^{\frac{n + 4}{n - 4}} + \varepsilon \lesssim \varepsilon_{0}.
\end{align*}
Hence Proposition \ref{local existence} yields
\begin{equation}\label{estimate u with w norm}
\|u\|_{W(I)} \lesssim \varepsilon_{0}.
\end{equation}
Substituting \eqref{estimate u with w norm} and \eqref{small condition 1} into \eqref{estimate f with y norm}, we obtain
\begin{equation}\label{stablity:2nd}
\|f(\tilde{u} + v) - f(\tilde{u})\|_{Y(I)} \lesssim \varepsilon_{0}^{\frac{8}{n - 4}}\|v\|_{X(I)}.
\end{equation}

Finally, we estimate the third term on the right-hand side of \eqref{x-y calc}. By \eqref{strichartz} and Sobolev,
\begin{equation}\label{stablity:3rd}
\begin{aligned}
\|\int_{0}^{t}e^{i(t - s)H}e(s) ds \|_{X(I)}
&\lesssim \|\Delta \int_{0}^{t}e^{i(t - s)H}e(s) ds \|_{L^{\frac{n^{2} - 16}{8}}(I, L^{\frac{n^{3} - 16n -64}{2n(n^{2}-16)}})} \\
&\lesssim \|e\|_{N(I)} \lesssim \varepsilon.
\end{aligned}
\end{equation}
Substituting \eqref{stablity:1st}, \eqref{stablity:2nd}, and \eqref{stablity:3rd} into \eqref{x-y calc}, we obtain
\[
\|v\|_{X(I)} \lesssim (E')^{\theta}\varepsilon^{1 - \theta} + \varepsilon_{0}^{\frac{8}{n - 4}}\|v\|_{X(I)} + \varepsilon.
\]
Choosing $\varepsilon_{0}$ sufficiently small, we obtain
\begin{equation}\label{estimate v with x}
\|v\|_{X(I)} \lesssim (E')^{\theta}\varepsilon^{1 - \theta}.
\end{equation}

Next, we improve this bound to the $W$-norm.
By \eqref{strichartz},
\begin{align*}
\|v\|_{W(I)}
&\lesssim \|e^{itH}v(0)\|_{W(I)} + \|f(\tilde{u} + v) - f(\tilde{u})\|_{N(I)} + \|e\|_{N(I)} \\
&\lesssim \varepsilon + \|f(\tilde{u} + v) - f(\tilde{u})\|_{N(I)}.
\end{align*}
Thus we estimate $\|f(\tilde{u} + v) - f(\tilde{u})\|_{N(I)}$.
By \eqref{nonlinear with delivative} and H\"older,
\begin{equation}\label{estimate f with N}
\begin{aligned}
\|f(\tilde{u} + v) - f(\tilde{u})\|_{N(I)}
&\lesssim \|\nabla \tilde{u}\|_{L^{a}(I, L^{b})}\|v\|_{L^{\frac{n^{2} - 16}{8}}(I, L^{\frac{2(n^{2}-16)}{n^{2} - 4n - 16}})}^{\frac{8}{n - 4}}
+ \|u\|_{W(I)}^{\frac{8}{n-4}}\|v\|_{W(I)},
\end{aligned}
\end{equation}
where
\[
\dfrac{1}{a} = \dfrac{1}{2} - \dfrac{64}{(n-4)^{2}(n+4)},\qquad
\dfrac{1}{b} = \dfrac{n+2}{2n} - \dfrac{4(n^{2}-4n-16)}{(n-4)^{2}(n+4)}.
\]
Let $b^{*}$ be defined by $\dfrac{1}{b^{*}} = \dfrac{1}{b} - \dfrac{1}{n}$. Then Sobolev implies
\[
\|\nabla \tilde{u}\|_{L^{a}(I, L^{b})} \lesssim \|\Delta \tilde{u}\|_{L^{a}(I, L^{b^{*}})}.
\]
Since $(a, b^{*})$ is B-admissible, \eqref{strichartz} gives
\begin{equation}\label{calc st}
\begin{aligned}
\|\Delta \tilde{u}\|_{L^{a}(I, L^{b^{*}})}
&\lesssim \|\tilde{u}(0)\|_{\dot{H}^{2}} + \|f(\tilde{u})\|_{N(I)} + \|e\|_{N(I)} \\
&\lesssim E + \|\tilde{u}\|^{\frac{n+4}{n-4}}_{W(I)} + \varepsilon \\
&\lesssim E + \varepsilon_{0}^{\frac{n+4}{n-4}} + \varepsilon \lesssim E.
\end{aligned}
\end{equation}
Also, by Sobolev,
\[
\|v\|_{L^{\frac{n^{2} - 16}{8}}(I, L^{\frac{2(n^{2}-16)}{n^{2} - 4n - 16}})} \lesssim \|v\|_{X(I)}.
\]
Thus, from \eqref{estimate v with x}, \eqref{estimate f with N}, and \eqref{calc st}, we obtain
\begin{equation}\label{stability:ineq:1}
\|f(\tilde{u} + v) - f(\tilde{u})\|_{N(I)} \lesssim E(E')^{\frac{8}{n-4}\theta}\varepsilon^{\frac{8}{n-4}(1 - \theta)} + \varepsilon_{0}^{\frac{8}{n-4}}\|v\|_{W(I)}.
\end{equation}
Hence
\[
 \|v\|_{W(I)} \lesssim \varepsilon + E(E')^{\frac{8}{n-4}\theta}\varepsilon^{\frac{8}{n-4}(1 - \theta)} + \varepsilon_{0}^{\frac{8}{n-4}}\|v\|_{W(I)}.
\]
If $\varepsilon$ is sufficiently small, then
\[
E(E')^{\frac{8}{n-4}\theta}\varepsilon^{\frac{8}{n-4}(1 - \theta)} \lesssim \varepsilon^{\frac{15}{(n-4)^{3}}},
\]
and by Appendix \ref{bootstrap}, \eqref{short stability conclusion 1} follows.
Then \eqref{short stability conclusion 4} follows from \eqref{stability:ineq:1} and \eqref{short stability conclusion 1}. Finally, \eqref{short stability conclusion 2} follows from \eqref{strichartz}, and \eqref{short stability conclusion 3} follows from \eqref{calc st} and the triangle inequality.
\end{proof}

\begin{proposition}[Long time perturbation]\label{prop:long time}
Assume that $V$ satisfies Assumptions \ref{as:wave} and \ref{as:equ}. Let $n \geq 5$ and $p = 2^{\sharp} - 1$.
Let $I \subset \mathbb{R}$ be a compact time interval containing $0$, and let $\tilde{u}$ be an approximate solution to \eqref{eq:NL4S} on $I \times \mathbb{R}^{n}$, namely,
\[
i\partial_{t}\tilde{u} + \Delta^{2}\tilde{u} + V\tilde{u} + \lambda|\tilde{u}|^{\frac{8}{n-4}}\tilde{u} = e
\]
for some $e \in N(I)$.
Assume that for some $M,E >0$,
\begin{align}
\|\tilde{u}\|_{L^{\infty}(I, \dot{H^{2}})} &\leq E, \\
\|\tilde{u}\|_{Z(I)} &\leq M.
\end{align}
Let $u_{0} \in H^{2}$ satisfy
\[
\|u_{0} - \tilde{u}(0) \|_{\dot{H}^{2}} \leq E'
\]
for some $E' >0$.
Assume also that
\begin{align}
\|e^{itH}\left(u_{0} - \tilde{u}(0)\right)\|_{W(I)} \leq \varepsilon, \\
\|e\|_{N(I)} \leq \varepsilon
\end{align}
for some $0 \leq \varepsilon \leq \varepsilon_{1}$, where $\varepsilon_{1} = \varepsilon_{1}(E, E', M) > 0$ is sufficiently small.
Then there exists a solution $u\in C(I, H^{2})$ to \eqref{eq:NL4S} with initial data $u_{0}$ such that
\begin{align}
\|u - \tilde{u}\|_{W(I)} &\lesssim \varepsilon + \varepsilon^{\frac{15}{(n-4)^{3}}}, \label{long stability conclusion 1}\\
\|u - \tilde{u}\|_{L^{q}(I, \dot{H}^{2, r})} &\lesssim E' + \varepsilon  + \varepsilon^{\frac{15}{(n-4)^{3}}},\label{long stability conclusion 2}\\
\|u\|_{L^{q}(I, \dot{H}^{2, r})} &\lesssim E + E', \label{long stability conclusion 3}\\
\|(i\partial_{t} + \Delta^{2} + V)(u - \tilde{u}) + e\|_{N(I)} &\lesssim \varepsilon + \varepsilon^{\frac{15}{(n-4)^{3}}}. \label{long stability conclusion 4}
\end{align}
\end{proposition}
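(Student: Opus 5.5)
The plan is the standard iteration argument that reduces long-time to short-time perturbation, as in Pausader and Tao--Visan. Proposition \ref{Short time perturbation} is applied on consecutive subintervals, on each of which $\tilde u$ has small $W$-norm. The first task is to convert the hypothesis $\|\tilde u\|_{Z(I)}\le M$ into a bound on $\|\tilde u\|_{W(I)}$, since the short time result needs smallness in $W$.

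\emph{Step 1: bound $\|\tilde u\|_{W(I)}$.} Fix a small $\eta>0$ and partition $I$ into $N\lesssim (1+M/\eta)^{\frac{2(n+4)}{n-4}}$ intervals $J_k$ with $\|\tilde u\|_{Z(J_k)}\le\eta$. On each $J_k=[t_k,t_{k+1}]$, the Strichartz estimate \eqref{strichartz} together with the nonlinear bound $\|\nabla(|\tilde u|^{\frac{8}{n-4}}\tilde u)\|_{N}\lesssim \|\tilde u\|_{Z}^{\frac{8}{n-4}}\|\tilde u\|_{W}$ gives
\[
\|\tilde u\|_{W(J_k)}\lesssim \|\tilde u(t_k)\|_{\dot H^2}+\eta^{\frac{8}{n-4}}\|\tilde u\|_{W(J_k)}+\|e\|_{N(J_k)}\lesssim E+\eta^{\frac{8}{n-4}}\|\tilde u\|_{W(J_k)}+\varepsilon .
\]
To absorb the middle term we argue by continuity in the right endpoint. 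For small $\eta$ this yields $\|\tilde u\|_{W(J_k)}\lesssim E$, and summing over $k$ gives $\|\tilde u\|_{W(I)}\le C(E,M)$. Next we refine the partition into $K=K(E,M,\varepsilon_0)$ intervals $I_j=[t_j,t_{j+1}]$ with $\|\tilde u\|_{W(I_j)}\le\varepsilon_0$. Here $\varepsilon_0=\varepsilon_0(E,2E')$ is the threshold from Proposition \ref{Short time perturbation}, applied with energy bound $E$ and closeness parameter $2E'$ (or a constant multiple).

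\emph{Step 2: induction over the subintervals.} On each $I_j$ we apply Proposition \ref{Short time perturbation}, with initial time $t_j$ after a time translation. This requires controlling two quantities:
\[
\|u(t_j)-\tilde u(t_j)\|_{\dot H^2}\quad\text{and}\quad \|e^{i(t-t_j)H}(u(t_j)-\tilde u(t_j))\|_{W(I_j)} .
\]
For the second, write $v=u-\tilde u$ and use Duhamel from time $0$. Then $e^{i(t-t_j)H}v(t_j)$ equals $e^{itH}v(0)$ plus Duhamel integrals over $[0,t_j]$ of $(i\partial_t+\Delta^2+V)v$. Applying \eqref{strichartz} (only the homogeneous part is restricted, so the integral over $[0,t_j]$ is legitimately estimated), we get
\[
\|e^{i(t-t_j)H}v(t_j)\|_{W(I_j)}\lesssim \varepsilon+\sum_{k<j}\|(i\partial_t+\Delta^2+V)v+e\|_{N(I_k)}+\|e\|_{N([0,t_j])}.
\]
By the conclusion \eqref{short stability conclusion 4} on the earlier intervals, this is bounded by
\[
\varepsilon+\sum_{k<j}C^{k}\bigl(\varepsilon+\varepsilon^{\frac{15}{(n-4)^3}}\bigr)\le C^{j}\bigl(\varepsilon+\varepsilon^{\frac{15}{(n-4)^3}}\bigr).
\]
For the first quantity, \eqref{short stability conclusion 2} gives $\|v(t_j)\|_{\dot H^2}\lesssim E'+C^j(\varepsilon+\varepsilon^{\frac{15}{(n-4)^3}})$. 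Since $\tilde u$ is only an approximate solution, I will phrase the constant as $E'$ plus a small error rather than the crude $C^jE'$, so that closeness stays within $2E'$ for all $j\le K$.

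\emph{Step 3: close the induction.} Choose $\varepsilon_1$ so small that $C^K(\varepsilon_1+\varepsilon_1^{\frac{15}{(n-4)^3}})\le\min(\varepsilon_0,E')$. Then the short time hypotheses hold on every $I_j$, and the local conclusions hold there with constants $C^j$. Summing over the $K$ intervals gives \eqref{long stability conclusion 1}, \eqref{long stability conclusion 2} and \eqref{long stability conclusion 4} with implicit constants depending on $E,E',M$. For \eqref{long stability conclusion 3}, apply \eqref{strichartz} on all of $I$ to $u$ itself, using $\|u\|_{W(I)}\le\|\tilde u\|_{W(I)}+\|v\|_{W(I)}$.

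The main obstacle is the bookkeeping in Step 2. The errors must not accumulate beyond the short time thresholds, so the $\dot H^2$ distance of $v(t_j)$ has to stay $\le 2E'$ uniformly in $j$. The fix is to bound $v(t_j)$ via Strichartz from time $0$ rather than iteratively, which avoids an exponential factor on $E'$. I expect the remaining difficulty to be only the $C^j$ growth multiplying $\varepsilon$, which is harmless because $K$ depends only on $E,E',M$.
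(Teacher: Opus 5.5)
Your proposal follows the paper's proof essentially step for step. It uses a $Z$-norm partition to get $\|\tilde u\|_{W(I)}\le C(E,M)$, re-partitions into pieces with $\|\tilde u\|_{W(I_j)}\le\varepsilon_0$, and runs an induction that checks the short-time hypotheses on $I_j$ by applying \eqref{strichartz} from time $0$ and summing the errors \eqref{short stability conclusion 4} from earlier pieces; your closeness parameter $2E'$ is, if anything, more careful than the paper's ``$\lesssim E'$''.

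One point that you and the paper both gloss over. For $n\ge 13$, the short-time proposition on $I_j$ can only be invoked with the bound actually verified there, $\varepsilon_j\sim C^j\varepsilon^{\frac{15}{(n-4)^3}}$, so used as a black box its output $\varepsilon_j+\varepsilon_j^{\frac{15}{(n-4)^3}}$ degrades the exponent at every step. To obtain the stated power, carry $\|u-\tilde u\|_{X(I_j)}$ through the induction instead: it satisfies a linear recursion by \eqref{exotic strichartz} and \eqref{stablity:2nd}, with the Gagliardo--Nirenberg interpolation of $e^{itH}(u_0-\tilde u(0))$ done once on all of $I$. Only after that should you pass to the $W$-norm via \eqref{estimate f with N}.
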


\begin{proof}
Let $N_{0} \sim \left( 1 + \frac{M}{\varepsilon_{0}} \right)^{\frac{2(n+4)}{n-4}}$, and decompose $I$ into $N_{0}$ subintervals so that
\[
\|\tilde{u}\|_{Z(J_{k})} \leq \varepsilon_{0}.
\]
Then, by \eqref{strichartz},
\begin{align*}
\|\tilde{u}\|_{W(J_{k})}
&\lesssim \|\tilde{u}(0)\|_{\dot{H}^{2}} + \|\tilde{u}\|_{Z(J_{k})}^{\frac{8}{n-4}}\|\tilde{u}\|_{W(J_{k})} + \|e\|_{N(J_{k})} \\
&\lesssim E + \varepsilon_{0}^{\frac{8}{n-4}}\|\tilde{u}\|_{W(J_{k})} + \varepsilon.
\end{align*}
Hence, if $\varepsilon_{0}$ is sufficiently small,
\[
\|\tilde{u}\|_{W(J_{k})} \lesssim E.
\]
Summing over all $J_{k}$, we obtain
\[
\|\tilde{u}\|_{W(I)} \leq C(E, M, \varepsilon_{0}).
\]

Now divide $I$ into $N_{1} \sim C(E, M, \varepsilon_{0})$ subintervals $I_{j} = [t_{j}, t_{j + 1}]$ such that
\[
\|\tilde{u}\|_{W(I_{j})} \leq \varepsilon_{0}.
\]
If $\varepsilon_{1}$ is chosen sufficiently small depending on $N_{1}, E, E'$, then Proposition \ref{Short time perturbation} implies that whenever
\begin{equation}\label{long:1}
\begin{aligned}
\|u(t_{j}) - \tilde{u}(t_{j}) \|_{\dot{H}^{2}} &\leq E', \\
\|e^{i(t - t_{j})H}\left(u(t_{j}) - \tilde{u}(t_{j})\right)\|_{W(I_{j})} &\leq \varepsilon,
\end{aligned}
\end{equation}
we have
\begin{align*}
\|u - \tilde{u}\|_{W(I_{j})} &\leq C(j)\left(\varepsilon + \varepsilon^{\frac{15}{(n-4)^{3}}}\right),  \\
\|u - \tilde{u}\|_{L^{q}(I_{j}, \dot{H}^{2, r})} &\leq C(j)\left(E' + \varepsilon  + \varepsilon^{\frac{15}{(n-4)^{3}}}\right), \\
\|u\|_{L^{q}(I_{j}, \dot{H}^{2, r})} &\leq C(j)\left(E + E'\right), \\
\|(i\partial_{t} + \Delta^{2} + V)(u - \tilde{u}) + e\|_{N(I_{j})} &\leq C(j)\left(\varepsilon + \varepsilon^{\frac{15}{(n-4)^{3}}}\right).
\end{align*}
We now prove \eqref{long:1} by induction starting from $t_{0}=0$.
Indeed,
\begin{align*}
\|u(t_{j+1}) - \tilde{u}(t_{j+1}) \|_{\dot{H}^{2}}
&\lesssim \|u(t_{0}) - \tilde{u}(t_{0}) \|_{\dot{H}^{2}} + \|e\|_{N([t_{0}, t_{j+1}])} + \|f(u) - f(\tilde{u})\|_{N([t_{0}, t_{j+1}])} \\
&\lesssim E' + \varepsilon + \sum_{k = 0}^{j}C(k)\left(\varepsilon + \varepsilon^{\frac{15}{(n-4)^{3}}}\right) \lesssim E'
\end{align*}
and similarly
\begin{align*}
\|e^{i(t - t_{j+1})H}\left(u(t_{j+1}) - \tilde{u}(t_{j+1})\right)\|_{W(I)}
&\lesssim \|e^{i(t - t_{0})H}\left(u(t_{0}) - \tilde{u}(t_{0})\right)\|_{W(I)} + \|e\|_{N(I)} + \|f(u) - f(\tilde{u})\|_{N(I)} \\
&\lesssim \varepsilon + \sum_{k = 0}^{j}C(k)\left(\varepsilon + \varepsilon^{\frac{15}{(n-4)^{3}}}\right).
\end{align*}
This proves the conclusion.
\end{proof}

\section{Almost conservation laws}

In this section we establish the almost conservation of localized mass and a localized Morawetz-type estimate.
We begin with the almost conservation of localized mass.

Let $\chi \in C^{\infty}_{0}(\mathbb{R}^{n})$ satisfy $\chi(r)=1$ for $r \leq 1$, $\chi(r)=0$ for $r \geq 2$, and $0 \leq \chi \leq 1$.
For $u \in L^{2}$, define the mass of $u$ on the ball $B_{x_{0}}(R)$ by
\begin{equation}\label{localize mass}
M(u, B_{x_{0}}(R)) = \int_{\mathbb{R}^{n}}|u(x)|^{2}\chi_{R}^{4}(x - x_{0})dx,
\end{equation}
where $g_{R}(x) = g(x /R)$.
By H\"older's inequality and Sobolev's inequality,
\begin{equation}\label{estimate for local mass 1}
M(u, B_{x_{0}}(R)) \lesssim \|\Delta u\|^{2}_{L^{2}}R^{4}.
\end{equation}

We now show that, for a solution $u \in C(I, H^{2})$ to \eqref{eq:NL4S}, the localized mass varies slowly in time when the radius $R$ is sufficiently large.

\begin{lemma}
Let $n \geq 5$ and $p \in (1, 2^{\sharp} - 1]$. Let $\lambda \in \mathbb{R}$ and let $u \in C(I, H^{2})$ be a solution to \eqref{eq:NL4S}.
Then, for any $t \in I$,
\begin{equation}\label{estimate for local mass 2}
|\partial_{t}M(u(t), B_{x_{0}}(R))| \leq C\dfrac{\mathcal{E}(u)^{\frac{3}{4}}}{R}M(u(t), B_{x_{0}}(R))^{\frac{1}{4}}
\end{equation}
holds, where $C > 0$ is independent of $u$ and $I$.
\end{lemma}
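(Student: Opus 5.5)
We will compute the time derivative of the localized mass directly from the equation. Write $\psi = \chi_{R}^{4}(\cdot - x_{0})$. Since $u$ solves \eqref{eq:NL4S}, we have $\partial_{t}u = i(\Delta^{2}u + Vu + \lambda|u|^{p-1}u)$. Hence
\[
\partial_{t}M(u(t), B_{x_{0}}(R)) = 2\,\mathrm{Re}\int \psi\, \bar{u}\,\partial_{t}u\,dx = -2\,\mathrm{Im}\int \psi\,\bar{u}\,(\Delta^{2}u + Vu + \lambda|u|^{p-1}u)\,dx .
\]
The potential and nonlinear terms give real integrands $\psi V|u|^{2}$ and $\lambda\psi|u|^{p+1}$, because $V$ and $\lambda$ are real. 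So they drop out, and only $-2\,\mathrm{Im}\int\psi\bar{u}\Delta^{2}u$ remains. (For $u \in C(I,H^{2})$ this computation is formal. We would justify it by regularizing the data, or by working with the weak form in $H^{-2}$, and then passing to the limit. This is standard and we would not dwell on it.)

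\textbf{Key step: integration by parts.} Integrating by parts twice,
\[
\int\psi\bar{u}\,\Delta^{2}u = \int \Delta(\psi\bar{u})\,\Delta u = \int \psi|\Delta u|^{2} + 2\int \nabla\psi\cdot\nabla\bar{u}\,\Delta u + \int (\Delta\psi)\,\bar{u}\,\Delta u .
\]
The first term is real, so
\[
|\partial_{t}M| \lesssim \int|\nabla\psi||\nabla u||\Delta u| + \int|\Delta\psi||u||\Delta u| .
\]
We now exploit the quartic power in $\psi=\chi_R^4$:
\[
|\nabla\psi| \lesssim R^{-1}\chi_{R}^{3}, \qquad |\Delta\psi| \lesssim R^{-2}\chi_{R}^{2}.
\]
The second term is then bounded by
\[
R^{-2}\|\chi_{R}^{2}u\|_{L^{2}}\|\Delta u\|_{L^{2}} = R^{-2}M^{1/2}\mathcal{E}^{1/2}.
\]
For the first term, Cauchy--Schwarz gives the bound $R^{-1}\mathcal{E}^{1/2}\|\chi_{R}^{3}\nabla u\|_{L^{2}}$.

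\textbf{Main obstacle: the localized gradient.} The difficulty is to bound $\|\chi_{R}^{3}\nabla u\|_{L^{2}}$ by $M^{1/4}\mathcal{E}^{1/4}$, up to acceptable errors. We plan to integrate by parts once more:
\[
\int\chi_{R}^{6}|\nabla u|^{2} = -\mathrm{Re}\int\chi_{R}^{6}\bar{u}\Delta u - \mathrm{Re}\int\nabla(\chi_{R}^{6})\cdot\bar{u}\nabla u .
\]
The first piece is at most $\|\chi_{R}^{2}u\|_{L^{2}}\|\Delta u\|_{L^{2}} = M^{1/2}\mathcal{E}^{1/2}$, using $\chi^6\le\chi^2$. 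The second piece is at most
\[
CR^{-1}\|\chi_{R}^{2}u\|_{L^{2}}\|\chi_{R}^{3}\nabla u\|_{L^{2}},
\]
since $\chi_R^5\le\chi_R^2\chi_R^3$. Setting $X=\|\chi_R^3\nabla u\|_{L^2}$, we get $X^{2} \lesssim M^{1/2}\mathcal{E}^{1/2} + R^{-1}M^{1/2}X$, and therefore
\[
X \lesssim M^{1/4}\mathcal{E}^{1/4} + R^{-1}M^{1/2}.
\]
This yields
\[
|\partial_{t}M| \lesssim R^{-1}\mathcal{E}^{3/4}M^{1/4} + R^{-2}\mathcal{E}^{1/2}M^{1/2}.
\]

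\textbf{Absorbing the extra term.} It remains to show $R^{-2}\mathcal{E}^{1/2}M^{1/2} \lesssim R^{-1}\mathcal{E}^{3/4}M^{1/4}$. This is equivalent to $M^{1/4}\lesssim R\,\mathcal{E}^{1/4}$, i.e. $M\lesssim R^{4}\mathcal{E}$, which is exactly \eqref{estimate for local mass 1}. This gives \eqref{estimate for local mass 2}, with a constant depending only on $\chi$ and $n$.
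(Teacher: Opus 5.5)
Your proof is correct and follows essentially the paper's argument: the same integration by parts against $\chi_R^4$, Cauchy--Schwarz against $\|\Delta u\|_{L^2}$, a second integration by parts for $\int\chi_R^6|\nabla u|^2$, and \eqref{estimate for local mass 1} to turn the $R^{-2}\mathcal{E}^{1/2}M^{1/2}$ term into $R^{-1}\mathcal{E}^{3/4}M^{1/4}$. The only difference is in the cross term $R^{-1}\int\chi_R^5|u||\nabla u|$: you absorb it through the quadratic inequality $X^2\lesssim M^{1/2}\mathcal{E}^{1/2}+R^{-1}M^{1/2}X$, while the paper bounds it directly by H\"older, using $\|\nabla u\|_{L^{2n/(n-2)}}\lesssim\|\Delta u\|_{L^2}$ and $\|(\nabla\chi)_R\chi_R^3\|_{L^n}\sim R$, and both routes give the same estimate.
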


\begin{proof}
By translation invariance, we may assume $x_{0}=0$.
Using \eqref{eq:NL4S} and integration by parts,
\begin{equation}\label{loacal math derivative}
\begin{aligned}
\dfrac{d}{dt}M(u(t), B_{0}(R))
&= 2\text{Re}\int_{\mathbb{R}^{n}} i((\Delta^{2}u) \bar{u} + V|u|^{2} + \lambda|u|^{p})\chi_{R}^{4}(x)dx \\
&= 2\text{Re}\int_{\mathbb{R}^{n}} i(\Delta^{2}u) \bar{u} \chi_{R}^{4}(x)dx \\
&= \dfrac{16}{R} \text{Re}\int_{\mathbb{R}^{n}} i\Delta u \nabla \bar{u}(\nabla \chi)_{R} \chi_{R}^{3}dx \\
&\quad + \dfrac{8}{R^{2}} \text{Re} \int_{\mathbb{R}^{n}} i\Delta u\left( \chi_{R}^{3} (\Delta \chi)_{R} + 3\chi_{R}^{2}(\nabla \chi)_{R}^{2}\right)\bar{u} dx.
\end{aligned}
\end{equation}
We estimate the two terms in the last line separately.
By the Cauchy--Schwarz inequality,
\begin{align*}
\left|\int_{\mathbb{R}^{n}} \Delta u \nabla \bar{u}(\nabla \chi)_{R} \chi_{R}^{3}dx \right|
\leq \|(\nabla \chi)_{R}\|_{L^{\infty}}\left( \int_{\mathbb{R}^{n}}|\Delta u|^{2} dx\right)^{\frac{1}{2}}\left(\int_{\mathbb{R}^{n}} |\nabla u|^{2}\chi_{R}^{6} dx\right)^{\frac{1}{2}}.
\end{align*}
Also, integrating by parts,
\[
 \int_{\mathbb{R}^{n}} |\nabla u|^{2}\chi_{R}^{6} dx
 = -\int_{\mathbb{R}^{n}} \bar{u}\left( (\Delta u) \chi_{R}^{6} + \nabla u\dfrac{6}{R}(\nabla \chi)_{R} \chi_{R}^{5} \right) dx.
\]
By H\"older and Sobolev,
\begin{equation}
\begin{aligned}
\int_{\mathbb{R}^{n}} |\nabla u|^{2}\chi_{R}^{6} dx
&\leq \|\chi_{R}\|_{L^{\infty}}^{4} \left(\int_{\mathbb{R}^{n}} |u|^{2}\chi_{R}^{4} dx\right)^{\frac{1}{2}} \left(\int_{\mathbb{R}^{n}} |\Delta u|^{2}dx \right)^{\frac{1}{2}} \\
&\quad + \dfrac{6}{R} \left(\int_{\mathbb{R}^{n}} |u|^{2}\chi_{R}^{4} dx\right)^{\frac{1}{2}} \left(\int_{\mathbb{R}^{n}} |\nabla u|^{2}(\nabla \chi_{R})^{2}\chi_{R}^{6} dx\right)^{\frac{1}{2}} \\
&\lesssim \|u\|_{\dot{H}^{2}}M(u(t), B_{0}(R))^{\frac{1}{2}} + \dfrac{1}{R}M(u(t), B_{0}(R))^{\frac{1}{2}} \\
&\quad \times \left(\int_{\mathbb{R}^{n}} |\nabla u|^{\frac{2n}{n-2}}dx\right)^{\frac{n-2}{2n}} \left(\int_{\mathbb{R}^{n}}(\nabla \chi_{R})^{n}\chi_{R}^{3n} dx\right)^{\frac{1}{n}} \\
&\lesssim \|u\|_{\dot{H}^{2}}M(u(t), B_{0}(R))^{\frac{1}{2}} + \dfrac{1}{R}M(u(t), B_{0}(R))^{\frac{1}{2}}\|u\|_{\dot{H}^{2}}R \\
&\lesssim \|u\|_{\dot{H}^{2}}M(u(t), B_{0}(R))^{\frac{1}{2}}.
\end{aligned}
\end{equation}

Next, for the second term in \eqref{loacal math derivative}, using \eqref{estimate for local mass 1}, H\"older, and Sobolev,
\begin{align*}
\left|\int_{\mathbb{R}^{n}} \Delta u\chi_{R}^{2}\bar{u}\left( \chi_{R} (\Delta \chi)_{R} + 3(\nabla \chi)_{R}^{2}\right) dx\right|
&\lesssim \left| \int_{\mathbb{R}^{n}} \Delta u \chi_{R}^{2} \bar{u} dx \right| \\
&\lesssim \left( \int_{\mathbb{R}^{n}} |\Delta u|^{2}dx \right)^{\frac{1}{2}} \left(\int_{\mathbb{R}^{n}} |u|^{2}\chi_{R}^{4}dx \right)^{\frac{1}{2}} \\
&\lesssim \|u\|_{\dot{H}^{2}}M(u(t), B_{0}(R))^{\frac{1}{2}} \\
&\lesssim R\|u\|_{\dot{H}^{2}}^{\frac{3}{2}}M(u(t), B_{0}(R))^{\frac{1}{4}}.
\end{align*}
Since $\|u\|_{\dot{H}^{2}} \leq \mathcal{E}(u)^{1/2}$, \eqref{estimate for local mass 2} follows.
\end{proof}

We next consider a localized Morawetz-type estimate.

\begin{proposition}
Let $n \geq 5$ and $p = 2^{\sharp} - 1$, and assume that $V$ satisfies Assumption \ref{as:mora}.
Then, for any $T > 0$, $u \in C([0, T], H^{2})$, $K > 0$, and $I \subset [0, T]$,
\begin{equation}\label{Morawetz}
\int_{I}\int_{|x| \leq K|I|^{\frac{1}{4}}} \dfrac{|u(x)|^{2^{\sharp}}}{|x|}dx \leq C(K^{3} + K^{-1})\left( \sup_{I} \hat{\mathcal{E}}(u) \right)|I|^{\frac{3}{4}},
\end{equation}
where $\hat{\mathcal{E}}(u) = \mathcal{E}(u) + \mathcal{E}(u)^{2^{\sharp}/2}$.
\end{proposition}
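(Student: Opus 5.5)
We follow Pausader's localized virial argument for the fourth-order equation and add the potential term. Fix $I=[a,b]\subset[0,T]$, set $R=K|I|^{1/4}$, and take a smooth radial weight $a_R(x)=R\,\psi(x/R)$ with $\psi(x)=|x|$ for $|x|\le 1$ and $\psi$ constant for $|x|\ge 2$. The functional is the localized Morawetz action
\[
M_R(t)=\mathrm{Im}\int_{\mathbb{R}^n}\nabla a_R\cdot\nabla u\,\bar u\,dx .
\]
Using Hardy's inequality $\||x|^{-1}u\|_{L^2}\lesssim\|\nabla u\|_{L^2}$ and interpolation $\|\nabla u\|_{L^2}\lesssim\|u\|_{L^2(|x|\lesssim R)}^{1/2}\|\Delta u\|^{1/2}$ on the support of the weight, together with \eqref{estimate for local mass 1}, we expect
\[
\sup_{t\in I}|M_R(t)|\lesssim R\,\|u\|_{L^2(B_{2R})}\|\nabla u\|_{L^2}\lesssim R^3\,\sup_I\mathcal{E}(u)\lesssim K^3|I|^{3/4}\sup_I\hat{\mathcal{E}}(u).
\]
This bound is what produces the $K^3$ term.

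\textbf{Step 1: the virial identity.} Differentiate $M_R$ using \eqref{eq:NL4S}. Only real $V$ enters, so integration by parts gives the $V=0$ terms of Pausader plus a potential term, $\frac{d}{dt}M_R = A_{\Delta^2}+A_{\mathrm{nl}}+A_V$. Here
\[
A_{\mathrm{nl}}=c_n\lambda\int\Delta a_R\,|u|^{2^\sharp}dx,\qquad A_V=-\int(\nabla a_R\cdot\nabla V)|u|^2dx .
\]
For $|x|\le R$ we have $\Delta a_R=(n-1)/|x|$. The quartic part $A_{\Delta^2}$ has the schematic form $4\int\partial_{jk}a_R\,\partial_{ik}u\,\overline{\partial_{ij}u}$ plus terms with $\Delta^2 a_R$ acting on $|\nabla u|^2$ and $\Delta^3 a_R$ acting on $|u|^2$. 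Since $\psi$ is convex for $|x|\le R$, the leading Hessian term is nonnegative there. The term $\Delta^2 a_R\sim -c/|x|^3$ in dimension $n\ge5$ has a favourable sign, or is bounded by Hardy and $\|\Delta u\|^2/|x|$-type quantities. I plan to copy Pausader's treatment for the $V=0$ part rather than rederive it.

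\textbf{Step 2: sign and error terms.} For the potential term, $\nabla a_R\cdot\nabla V=\psi'(|x|/R)\,\frac{x}{|x|}\cdot\nabla V$. Inside $B_R$ this equals $|x|^{-1}\,x\cdot\nabla V\le0$ by Assumption \ref{as:mora}(1), so $A_V\ge0$ there and the term can be discarded. In the region $R\le|x|\le2R$, use Assumption \ref{as:mora}(2): $|\nabla a_R\cdot\nabla V|\lesssim\langle x\rangle^{-\beta-1}$ with $\beta\ge4$. Hölder and Sobolev then give a bound $\lesssim R^{-1}\|\langle x\rangle^{-4}\|_{L^{n/4}(|x|\ge R)}\|u\|_{L^{2^\sharp}}^2\lesssim R^{-1}\mathcal{E}(u)$. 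All other error terms supported in $R\le|x|\le2R$ carry a factor $R^{-1}$ times $\|\Delta u\|_{L^2}^2$ or $\|u\|_{L^{2^\sharp}}^{2^\sharp}$, the latter controlled by Sobolev through $\hat{\mathcal{E}}$. Integrating over $I$ gives a total error of size
\[
|I|R^{-1}\sup_I\hat{\mathcal{E}} = K^{-1}|I|^{3/4}\sup_I\hat{\mathcal{E}},
\]
which is the $K^{-1}$ term.

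\textbf{Step 3: conclusion.} Since $\lambda>0$, the nonlinear term over $|x|\le R$ is $\gtrsim\int_{|x|\le R}|u|^{2^\sharp}/|x|$. Integrating $\frac{d}{dt}M_R$ over $I$ and dropping the nonnegative pieces yields \eqref{Morawetz}.

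The main obstacle is Step 1, specifically the quartic term $A_{\Delta^2}$. The biharmonic virial produces lower-order terms such as $\Delta^2 a_R\,|\nabla u|^2$, and these are not of a single sign near the origin. Each one must be shown either nonnegative on $|x|\le R$ or bounded by $R^{-1}\mathcal{E}$-type quantities after integrating over $I$. I would first try to use the explicit formula for $a_R=|x|$ on $B_R$ and the radial symmetry to verify the signs. Failing that, I would absorb these terms via Hardy-type inequalities $\int|x|^{-3}|\nabla u|^2\lesssim\int|x|^{-1}|\Delta u|^2$.
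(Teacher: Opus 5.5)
\textbf{Verdict.} Your architecture is the paper's, but the one step you defer is the crux of the proof, and your diagnosis of it is off.

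\textbf{What matches.} Apart from the biharmonic quadratic terms, your outline agrees with the paper. You use the weight $|x|$ on $B_R$, cut off at $2R$, with $R=K|I|^{1/4}$. The potential term is signed on $B_R$ by $x\cdot\nabla V\le0$, and is of size $R^{-1}\mathcal{E}$ on the annulus since $|\nabla V|\lesssim\langle x\rangle^{-5}$. The annulus errors $|I|R^{-1}\sup_I\hat{\mathcal{E}}$ give the $K^{-1}$ term, and the endpoint values of the action give the $K^3$ term.

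\textbf{Two small repairs.} First, since $|\nabla a_R|\lesssim1$, there is no factor $R$ in your bound for $M_R$. There is also no need for Hardy or for the global $\|\nabla u\|_{L^2}$, which $\mathcal{E}$ does not control. Simply
\[
|M_R|\le\|u\|_{L^2(B_{2R})}\|\nabla u\|_{L^2(B_{2R})}\lesssim R^2\cdot R\,\mathcal{E}(u)
\]
by H\"older and Sobolev. Second, $|x|$ must be regularized, as the paper does with $(\delta^2+|x|^2)^{1/2}$ and $\delta\to0$, and the identity justified for $H^2$ solutions by density. The reason is that $|x|$ is not smooth at the origin, and for $n=5$ the distribution $\Delta^3|x|$ is a point mass.

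\textbf{The gap.} With $a=|x|$, the quadratic part of the identity on $B_R$ is the sum of
\[
\frac{4}{|x|}\sum_i\bigl(|\nabla\partial_iu|^2-|\partial_r\partial_iu|^2\bigr),\quad \frac{2(n-1)}{|x|^3}\bigl(|\nabla u|^2-3|\partial_ru|^2\bigr),\quad \frac{(n-1)(n-3)}{|x|^3}|\nabla u|^2,\quad \tfrac12\,\Delta^3|x|\,|u|^2 .
\]
\begin{itemize}
\item The third term is your $-\Delta^2a_R|\nabla u|^2$. You first correctly call it favourable and later call it indefinite; it is nonnegative.
\item The fourth term is also nonnegative, since $\Delta^3|x|=3(n-1)(n-3)(n-5)|x|^{-5}$ away from $0$.
\item The only indefinite term is the second. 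It comes from $-2\,\partial_{jk}\Delta a\,\mathrm{Re}(\partial_ju\,\partial_k\bar u)$, which is missing from your Step 1.
\end{itemize}
For $n\ge7$ the third term absorbs the second, since together they give $\frac{n-1}{|x|^3}\bigl((n-1)|\nabla u|^2-6|\partial_ru|^2\bigr)\ge0$. For $n=5,6$ it must be paid for by the tangential Hessian term.

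\textbf{How the paper closes it.} The paper uses the inequality it quotes from Levandosky,
\[
\sum_i\bigl(|\nabla\partial_iu|^2-|\partial_r\partial_iu|^2\bigr)\ge\frac{n-1}{|x|^2}|\partial_ru|^2 .
\]
This makes the first two terms together at least $\frac{2(n-1)}{|x|^3}(|\nabla u|^2-|\partial_ru|^2)\ge0$.

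\textbf{Your radial idea works.} For radial $u$ that inequality is an identity, and the first three terms sum exactly to $\frac{(n-1)(n-3)}{|x|^3}|\partial_ru|^2$. This covers the application in Section 9, although the proposition is stated without radiality. Note also that, read pointwise, the inequality is not true for general $u$: it fails on any region away from the origin where $u$ is affine with $\partial_ru\neq0$. So for nonradial $u$ it has to be used in integrated form.

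\textbf{Your Hardy fallback cannot work.} The indefinite term carries a fixed $O(1)$ coefficient. Dominating it by $\int_I\int|x|^{-1}|\Delta u|^2$ would help only if that quantity were either
\begin{itemize}
\item a positive term already on the left, which it is not, since you only have the tangential part of $D^2u$; or
\item bounded by $|I|R^{-1}\mathcal{E}$, which is false when $\Delta u$ concentrates near the origin.
\end{itemize}
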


\begin{proof}
Let $u_{0} \in C_{0}^{\infty}(\mathbb{R}^{n})$, $h \in C_{0}^{\infty}(\mathbb{R}^{n + 1})$, and let $v$ be the solution of \eqref{eq:duhamel}.
For two differentiable functions $f,g$, define
\begin{equation}
\{f, g\}_{p} = \text{Re}(f\nabla \bar{g} - g \nabla \bar{f}).
\end{equation}
For a compactly supported real-valued function $a$, define
\begin{equation}\label{def local mass}
M_{a}^{0}(t) = 2 \int_{\mathbb{R}^{n}} \partial_{j}a(x)\text{Im}(\bar{v}(t,x)\partial_{j}v(t, x))dx.
\end{equation}
We first compute the time derivative of $M_{a}^{0}$.
Let
\begin{align*}
T_{j} &= \text{Im}(\bar{v}\partial_{j}v), \\
T_{j}^{\alpha} &= - \{h, v \}^{j}_{p}, \\
T_{j}^{\beta} &= \dfrac{1}{2}\left(\bar{v}\partial_{j}(\Delta^{2}v) + v\partial_{j}(\Delta^{2}\bar{v}) -\partial_{j}v\Delta^{2}\bar{v} - \partial_{j}\bar{v}\Delta^{2}v\right), \\
T_{j}^{\gamma} &= \dfrac{1}{2}\left(\bar{v}\partial_{j}(Vv) + v\partial_{j}(V\bar{v}) -\partial_{j}vV\bar{v} - \partial_{j}\bar{v}Vv\right).
\end{align*}
Then
\[
 \partial_{t}T_{j} = T_{j}^{\alpha} + T_{j}^{\beta} + T_{j}^{\gamma}.
\]
If we set
\[
T_{j,k} = 2\Delta\text{Re}(\partial_{j}v\partial_{k}\bar{v}) - \dfrac{1}{2}\delta_{j,k}\Delta^{2}(|v|^{2}) + \delta_{j,k}\Delta(|\nabla v|^{2}) - \sum_{i}^{n}4\text{Re}(\partial_{j,i}\bar{v}\partial_{k,i}v),
\]
then a direct computation shows that
\[
T_{j}^{\beta} = \sum_{k}^{n}\partial_{k}T_{j,k}.
\]
Also,
\begin{align*}
T_{j}^{\gamma}
&= -\text{Re}(Vv \partial_{j} \bar{v} - v\partial_{j}(V\bar{v})) \\
&= |v|^{2}\partial_{j} V.
\end{align*}
Hence, integrating by parts,
\begin{equation}\label{partial mass}
\begin{aligned}
\partial_{t} M_{a}^{0} (t)
&= 2\int_{\mathbb{R}^{n}} \partial_{j}a(x)T_{j}^{\alpha} + 2\int_{\mathbb{R}^{n}} \partial_{j}a(x)T_{j}^{\beta} + 2\int_{\mathbb{R}^{n}} \partial_{j}a(x)T_{j}^{\gamma} \\
&= 2\int_{\mathbb{R}^{n}}\Bigl( - 2\text{Re}(\partial_{j}v\partial_{k}\bar{v}) \partial_{j,k}\Delta a + \dfrac{1}{2}(\Delta^{3}a)|v|^{2}
+ 4\partial_{j,k}a\text{Re}(\partial_{j,i}\bar{v}\partial_{k,i}v) \\
&\qquad -\Delta^{2}a|\nabla v|^{2}
- \partial_{j}a \{h, v \}^{j}_{p} + \partial_{j}a |v|^{2}\partial_{j} V\Bigr)dx.
\end{aligned}
\end{equation}
By density, \eqref{partial mass} also holds for $h\in N(I)$ and $v\in C(I,H^{2})$.

Now let $u\in C(I, H^{2}) \cap M(I)$ be a solution to \eqref{eq:NL4S}. Then $u$ is a solution to \eqref{eq:duhamel} with
\[
h = \lambda |u|^{2^{\sharp}-2}u,\qquad h\in N(I).
\]
Hence \eqref{partial mass} holds for this $u,h$. Moreover,
\begin{equation}\label{nonlinear estimate in mass}
\int_{\mathbb{R}^{n}}\partial_{j}a\{h, u\}_{p}^{j}dx = \dfrac{4\lambda}{n}\int_{\mathbb{R}^{n}}(\Delta a)|u|^{2^{\sharp}} dx.
\end{equation}

Now let $a(x) = \langle x \rangle_{\delta}\chi_{R}(x)$, where
\[
\langle x \rangle_{\delta} = (\delta^{2} + |x|^{2})^{\frac{1}{2}}.
\]
If $\alpha \in \mathbb{N}^{n}$ is a multi-index and $R \geq \delta$, $R \leq |x| \leq 2R$, then
\begin{equation}
|D^{\alpha}a(x)| \leq CR^{1 - |\alpha|}.
\end{equation}
Integrating \eqref{partial mass} over $I$ and using \eqref{def local mass} and \eqref{nonlinear estimate in mass}, we obtain
\begin{equation}
\begin{aligned}
&2\int_{I}\int_{|x| \leq R}\left( \dfrac{4\sum_{i}(\nabla|\partial_{j}u|^{2} - |\partial_{r}\partial_{i}u|^{2})}{\langle x \rangle_{\delta}} + \dfrac{2(n-1)(|\nabla u|^{2}-3|\partial_{r}u|^{2})}{\langle x \rangle_{\delta}^{3}}\right) dx \\
&+ \int_{I}\int_{|x| \leq R}\left( \dfrac{(n-1)(n-3)|\nabla u|^{2}}{\langle x \rangle_{\delta}} + \dfrac{8\lambda (n - 1)|u|^{2^{\sharp}}}{n\langle x \rangle_{\delta}}\right) dx + O(\delta)\\
&- \int_{I}\int_{|x| \leq R} \dfrac{x \cdot \nabla V}{|x|}|u|^{2} dx \\
&\leq C\int_{I}\int_{R \leq |x| \leq 2R} \left(R^{-3}|\nabla u|^{2} + R^{-5}|u|^{2} + R^{-1}|\nabla^{2}u|^{2} + R^{-1}|u|^{2^{\sharp}} + |\nabla V||u|^{2}\right)dx \\
&+ C\int_{|x| \leq 2R}\left[ |u\nabla u| \right]_{t_{1}}^{t_{2}}dx.
\end{aligned}
\end{equation}
Using the assumption $|\nabla V| \lesssim R^{-5}$ for $R \leq |x| \leq 2R$ and letting $\delta \to 0$, we obtain
\begin{equation}
\begin{aligned}
&2\int_{I}\int_{|x| \leq R}\left( \dfrac{4\sum_{i}(\nabla|\partial_{j}u|^{2} - |\partial_{r}\partial_{i}u|^{2})}{|x|} + \dfrac{2(n-1)(|\nabla u|^{2}-3|\partial_{r}u|^{2})}{|x|^{3}}\right) dx \\
&+ \int_{I}\int_{|x| \leq R}\left( \dfrac{(n-1)(n-3)|\nabla u|^{2}}{|x|} + \dfrac{8\lambda (n - 1)|u|^{2^{\sharp}}}{n|x|}\right) dx\\
&- \int_{I}\int_{|x| \leq R} \dfrac{x \cdot \nabla V}{|x|}|u|^{2} dx \\
&\leq C\int_{I}\int_{R \leq |x| \leq 2R} \left(R^{-3}|\nabla u|^{2} + R^{-5}|u|^{2} + R^{-1}|\nabla^{2}u|^{2} + R^{-1}|u|^{2^{\sharp}} + R^{-5}|u|^{2}\right)dx \\
&+ C\int_{|x| \leq 2R}\left[ |u\nabla u| \right]_{t_{1}}^{t_{2}}dx \\
&\leq C|I|R\sup_{I}\left(\mathcal{E}(u) + \mathcal{E}(u)^{2^{\sharp}/2}\right) + CR^{3}\sup_{I}\mathcal{E}(u).
\end{aligned}
\end{equation}
Here $C$ is independent of $I,u,R$.
Moreover, it is shown in \cite{Leva} that for $u \in H^{2}$,
\begin{equation}\label{eq:leva}
\sum_{i}\left( |\nabla \partial_{i}u|^{2} - |\partial_{r}\partial_{i}u|^{2}\right) \geq \dfrac{n-1}{|x|^{2}}|\partial_{r}u|^{2}.
\end{equation}
Hence \eqref{eq:leva} implies
\begin{align*}
\int_{I}\int_{|x| \leq R}\left( \dfrac{4\sum_{i}(\nabla|\partial_{j}u|^{2} - |\partial_{r}\partial_{i}u|^{2})}{|x|} + \dfrac{2(n-1)(|\nabla u|^{2}-3|\partial_{r}u|^{2})}{|x|^{3}}\right) dx \geq 0.
\end{align*}
Since $x \cdot \nabla V \leq 0$, taking $R = K|I|^{\frac{1}{4}}$ yields \eqref{Morawetz}.
\end{proof}

\section{Global existence}

In this section we prove Theorem \ref{th:global}. Let $H^{2}_{rad}$ denote the set of radial functions in $H^{2}$.

\begin{proposition}\label{prepare main thm}
Let $V \in L^{\infty} \cap L^{\frac{n}{4}}$ be a radial real-valued potential satisfying Assumptions \ref{as:wave}, \ref{as:mora}, and \ref{as:equ}. Let $n \geq 5$, $p = 2^{\sharp} - 1$, and assume $\lambda > 0$.
Let $u \in C([t_{-}, t_{+}], H^{2}_{rad})$ be a radial solution to \eqref{eq:NL4S} satisfying $\|u\|_{W([t_{-}, t_{+}])} < \infty$.
Then there exists a constant $K > 0$, depending only on $n$, $\lambda$, and $\mathcal{E} = \sup_{t}\mathcal{E}(u)$, such that
\begin{equation}\label{bound of z norm}
\|u\|_{Z([t_{-}, t_{+}])} \leq K.
\end{equation}
\end{proposition}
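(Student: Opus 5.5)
We follow Bourgain's induction-on-energy scheme as adapted by Pausader to the radial fourth-order setting. The three ingredients available from earlier sections are: the almost conservation of localized mass \eqref{estimate for local mass 2}, the localized Morawetz estimate \eqref{Morawetz}, and the stability theory (Propositions \ref{Short time perturbation} and \ref{prop:long time}). The potential enters only through these tools and through the coercivity
\[
\mathcal{E}(u) \lesssim E(u) \lesssim \mathcal{E}(u)+\mathcal{E}(u)^{2^{\sharp}/2},
\]
which holds because $V\geq 0$ and \eqref{estimate v} applies. Consequently $\sup_t \mathcal{E}(u)$ is controlled by the conserved energy.

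Fix a small $\eta=\eta(\mathcal{E})>0$. Partition $[t_-,t_+]$ into $J$ consecutive intervals $I_j$ with $\|u\|_{Z(I_j)}^{2(n+4)/(n-4)}\sim\eta$. Since $\|u\|_{Z}^{2(n+4)/(n-4)} \lesssim J\eta$, it suffices to bound $J$ by a constant depending only on $\mathcal{E}$.

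\textbf{Step 1 (concentration on each interval).} On each $I_j$ we show that $u$ concentrates at the spatial origin. Using the Strichartz estimate \eqref{strichartz} and the local theory (Proposition \ref{local existence} and Corollary \ref{global solution for small energy}), a lower bound on $\|u\|_{Z(I_j)}$ forces the linear evolution to carry a non-negligible piece. Via a frequency/space localization argument, this piece lives in a bubble of size $\sim|I_j|^{1/4}$. Radial symmetry pins the bubble at $x=0$. The conclusion is a subinterval $I_j'\subset I_j$ with $|I_j'|\gtrsim_\eta |I_j|$ on which
\[
\int_{|x|\lesssim C(\eta)|I_j|^{1/4}}|u(t)|^{2^{\sharp}}\,dx\gtrsim_\eta 1
\quad\text{and}\quad
M\bigl(u(t),B_0(C(\eta)|I_j|^{1/4})\bigr)\gtrsim_\eta |I_j|.
\]
The mass bound is stable in time on $I_j$ by \eqref{estimate for local mass 2}.

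\textbf{Step 2 (Morawetz bound on unions of intervals).} Inserting the concentration into \eqref{Morawetz} on any union $I=\bigcup_{j_1\le j\le j_2}I_j$ gives
\[
\sum_{j_1\le j\le j_2}|I_j|^{3/4}\lesssim_\eta |I|^{3/4}.
\]
This holds because on $I_j'$ the integrand $|u|^{2^{\sharp}}/|x|$ is at least $|I_j|^{-1/4}$ times a constant. A Bourgain-type combinatorial lemma then yields, for many $j$, a chain of intervals on which $|I_j|$ shrinks geometrically while the bubbles remain concentrated at $0$.

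\textbf{Step 3 (energy evacuation and induction on energy).} Along such a chain, almost mass conservation \eqref{estimate for local mass 2} shows that mass concentrated in a small ball must persist. This contradicts the smallness of the mass bound $\|\Delta u\|_{L^2}^2R^4$ from \eqref{estimate for local mass 1} at the larger scale unless energy leaves the ball. Iterating drives out a fixed quantum of energy $\eta^C$. We then remove the concentrated bubble, which costs energy $\gtrsim\eta^C$, and compare $u$ to a solution of strictly smaller energy. That solution is bounded by the induction hypothesis, and Proposition \ref{prop:long time} transfers the bound back to $u$. The smallest energy case is handled by Corollary \ref{global solution for small energy}. Together these bound $J$, and hence \eqref{bound of z norm}.

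\textbf{Main obstacle.} The key difficulty is Step 1: extracting a concentrating bubble without the exact scaling and the Fourier-multiplier structure available when $V=0$. We would first try to run Pausader's argument for $e^{it\Delta^2}$ and then transfer it to $e^{itH}$ using two facts. First, the identity $e^{itH}=W_\pm e^{it\Delta^2}W_\pm^*$ together with Proposition \ref{wave bounded} lets us pass between the two propagators. Second, the decay estimate \eqref{lplq for H} replaces the free dispersive estimate. Because of the potential, the induction-on-energy step must be performed with $V$ fixed.
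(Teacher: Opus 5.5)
Your proposal has a genuine gap in Step 1, which you yourself flag as the main obstacle, and your Step 3 (induction on energy) is both unsupported and unnecessary.

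\textbf{The gap in Step 1.} As stated, Step 1 claims that on \emph{every} interval $I_j$ the solution carries mass $\gtrsim_\eta |I_j|$ and $L^{2^{\sharp}}$-mass $\gtrsim_\eta 1$ in a ball of radius $C(\eta)|I_j|^{1/4}$, for a time $\gtrsim_\eta |I_j|$. This does not follow from $\|e^{i(t-t_j)H}u(t_j)\|_{W(I_j)}\gtrsim\eta$, because that bound gives no lower bound on the scale at which the solution concentrates. In fact the claim fails for general intervals, already when $V=0$. For example, $u(t_j)$ may contain a wave packet $g$ at frequency $N\gg|I_j|^{-1/4}$ with $\|\Delta g\|_{L^{2}}^{2}\sim\eta^{2}$, which focuses only for a time $\sim N^{-4}$ inside $I_j$. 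It sits on top of a part spread at a scale far larger than $C(\eta)|I_j|^{1/4}$. The packet's mass is then $\sim\eta^{2}N^{-4}\ll\eta^{C}|I_j|$ at every time, and its $L^{2^{\sharp}}$-concentration lasts only $\sim N^{-4}$. So the lower bound $\int_{I_j}\int|u|^{2^{\sharp}}/|x|\gtrsim_\eta|I_j|^{3/4}$ that Step 2 sums is unavailable on such intervals. Two things are missing: a mechanism that produces concentration at the right scale, and a count of the intervals where it fails.

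Your proposed remedy via $e^{itH}=W_\pm e^{it\Delta^{2}}W_\pm^{*}$ cannot supply the mechanism. $W_\pm$ is non-local, so its $L^{p}$-boundedness transports neither mass in a ball nor pointwise largeness from the free flow to $e^{itH}$.

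\textbf{The missing idea.} The paper uses an exceptional/unexceptional dichotomy together with dispersive smoothing of the Duhamel term from the distant past.
\begin{itemize}
\item An interval $I_j$ is exceptional if $\|u_\pm\|_{W(I_j)}>\eta^{K_2}$, where $u_\pm=e^{i(t-t_\pm)H}u(t_\pm)$. By Strichartz there are at most $N_e$ such intervals, see \eqref{bound of exceptional}; the example above is of this type.
\item On an unexceptional $I=[t_0,t_1]$, write $u=u_-+i\lambda v+i\lambda\int_{t_0}^{t}e^{i(t-s)H}|u|^{\frac{8}{n-4}}u\,ds$, with $v(t)=\int_{t_-}^{t_0}e^{i(t-s)H}|u|^{\frac{8}{n-4}}u\,ds$. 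The first and last terms are small, so $\|v\|_{Z(I^{2})}\gtrsim\eta^{2}\mathcal{E}^{-1/2}$.
\item Since $t-s\geq|I|/2$ for $t\in I^{2}$ and $s\leq t_0$, the decay estimate \eqref{lplq for H} yields \eqref{additional regurality}: $v$ is nearly constant at scale $r=\eta^{n+5}|I|^{1/4}$.
\item This forces mass $\gtrsim\eta^{K_1}\mathcal{E}^{-\frac{n+2}{2}}|I|$ in a ball of radius $2r$. By \eqref{estimate for local mass 2} this persists over all of $I$ at radius $2\eta^{-K_1}|I|^{1/4}$, and it transfers to $u$ because $u_-$ has small local mass.
\end{itemize}
This uses only \eqref{lplq for H}: no Littlewood--Paley theory for $H$ and no scaling.

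\textbf{Step 3.} The paper uses no induction on energy; your energy-evacuation observation already closes the argument (Step \ref{step5}). Take a chain from Proposition \ref{prop9-2} and a time $t_*$ in its shortest interval. The annuli $A_{1+kl}$ are disjoint, and each carries $\int|u(t_*)|^{2^{\sharp}}\gtrsim\eta^{K_3}$. Hence the chain length is $\lesssim\mathcal{E}\eta^{-K_3}\ln(1/\eta)$. Combined with $M\geq\ln N/\ln(2K^{-1})$, this bounds each cluster of unexceptional intervals. The bubble removal you sketch would also need an energy decoupling that accounts for $\int V|u|^{2}$ and the cross terms, plus control of the interaction between the bubble and the remainder. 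None of this is supplied.
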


We first show that Proposition \ref{prepare main thm} implies Theorem \ref{th:global}.

\begin{proof}[Proof of Theorem \ref{th:global}]
Let $u_{0} \in H^{2}$ be radial. By the Strichartz estimate, there exists $T > 0$ such that
\[
\|e^{itH}u_{0}\|_{W(I)} < \delta,\qquad I = [0, T].
\]
Then Proposition \ref{local existence} implies that there exists a solution $u \in C(I, H^{2})$ to \eqref{eq:NL4S} with $u(0)=u_{0}$.
By Proposition \ref{criterion}, this solution can be extended to its maximal interval $[0, T^{*})$, and for every compact interval $I' \subset [0, T^{*})$, one has $u \in M(I')$.

If $T^{*} < \infty$, then the $Z([0,T^{*}))$-norm of $u$ must blow up.
By uniqueness, $u$ remains radial.
Assume for contradiction that $T^{*} < \infty$.
Let $I' \subset [0, T^{*})$ be compact. Then Proposition \ref{prepare main thm} implies that the $Z(I')$-norm of $u$ is bounded by a constant independent of $I'$.
This contradicts the blow-up of the $Z$-norm.
Hence $T^{*} = +\infty$.
\end{proof}

We now prove Proposition \ref{prepare main thm}. The proof is divided into several steps.

\begin{step}\label{step1}
Let $u \in C([t_{1}, t_{2}], H^{2}_{rad})$ be a radial solution to \eqref{duhamel2} on $I = [t_{1}, t_{2}]$.
There exists a sufficiently small constant $\eta_{0} > 0$, depending only on $n$ and $\lambda$, such that if
\begin{equation}\label{small condition in main thm}
\dfrac{1}{4}\eta < \|u\|_{W(I)} < \eta
\end{equation}
holds for some $0 < \eta < \eta_{0}$, then
\begin{equation}\label{small w-norm}
\|u_{k}\|_{W(I)} \geq \dfrac{1}{8} \eta
\end{equation}
holds, where $u_{k} = e^{itH}u(t_{k})$, $k = 1, 2$.
\end{step}

\begin{proof}[Proof of Step \ref{step1}]
By Duhamel's formula and the Strichartz estimate,
\begin{align*}
\|u_{k}\|_{W(I)}
&\geq \|u\|_{W(I)} - C\||u|^{\frac{8}{n-4}}u\|_{N(I)}
\geq \dfrac{1}{4} \eta - C \eta^{\frac{n+4}{n-4}}.
\end{align*}
Hence, if $\eta \leq \eta_{0}$ is sufficiently small, \eqref{small w-norm} follows.
\end{proof}

If $\mathcal{E}(u)$ is small, then global existence follows from Corollary \ref{global solution for small energy}. Thus we may assume that $\mathcal{E}(u)$ is bounded below away from $0$; namely, there exists $\varepsilon_{0}>0$ such that
\[
\forall t \in [t_{1}, t_{2}],\qquad \mathcal{E}(u(t)) \geq \varepsilon_{0}.
\]

We partition $[t_{-}, t_{+}]$ into $N$ pairwise disjoint intervals $(I_{j})_{1\leq j \leq N}$ such that on each interval \eqref{small condition in main thm} holds.
Also define
\[
u_{\pm}(t) = e^{i(t - t_{\pm})H}u(t_{\pm}).
\]
We call an interval $I_{j}$ \emph{exceptional} if one of the following holds:
\begin{equation}\label{exceptional}
\begin{aligned}
\|u_{+}\|_{W(I_{j})} &> \eta^{K_{2}},\ \ \text{or} \\
\|u_{-}\|_{W(I_{j})} &> \eta^{K_{2}},
\end{aligned}
\end{equation}
where $K_{2} = 24n^{2}$.
Intervals that are not exceptional are called \emph{unexceptional}.
By the Strichartz estimate and Sobolev's inequality, the number $N_{e}$ of exceptional intervals satisfies
\begin{equation}\label{bound of exceptional}
N_{e} \leq C(\|u_{0}\|_{\dot{H}^{2}}\eta^{-K_{2}})^{\frac{2(n+4)}{n-4}} + 1.
\end{equation}
If all intervals are exceptional, then \eqref{bound of z norm} follows immediately from \eqref{small condition in main thm} and \eqref{bound of exceptional}.
Hence we may assume from now on that there exists at least one unexceptional interval.
By Step \ref{step1}, the intervals $I_{1}$ and $I_{N}$ are always exceptional.

\begin{step}\label{step2}
Let $u \in C([t_{-}, t_{+}], H^{2}_{rad})$ be a radial solution to \eqref{duhamel2}, and let $I = [t_{0}, t_{1}]$ be an unexceptional interval for $u$.
Then there exists $x_{0} \in \mathbb{R}^{n}$ such that, for every $t \in I$,
\begin{equation}\label{conclusion of step9-2}
M\left(u(t), B_{x_{0}}(2\eta^{-K_{1}}|I|^{\frac{1}{4}}) \right) \geq C\eta^{K_{1}}\mathcal{E}^{-\frac{n+2}{2}}|I|,
\end{equation}
where $K_{1} = n^{2} + 6n + 4$, and $C>0$ is independent of $I$, $x_{0}$, and $u$.
Moreover, $\eta$ is assumed sufficiently small so that $\eta < \mathcal{E}^{-5n}\eta_{1}$, where $\eta_{1} > 0$ depends only on $n$ and $\lambda$.
\end{step}

\begin{proof}[Proof of Step \ref{step2}]
Let
\[
I^{1} = \left[t_{0}, \frac{t_{0} + t_{1}}{2}\right],\qquad
I^{2} = \left[\frac{t_{0} + t_{1}}{2}, t_{1}\right].
\]
By time reversal symmetry, time translation invariance, and \eqref{small condition in main thm}, we may assume
\begin{equation}\label{half}
\|u\|_{W(I^{2})} \geq \dfrac{1}{8}\eta.
\end{equation}
Moreover, by Duhamel's formula, for any $t \in I^{2}$,
\begin{equation}\label{divided duhamel}
\begin{aligned}
u(t) = u_{-}(t) + i\lambda \int_{t_{-}}^{t_{0}} e^{i(t-s)H}|u(s)|^{\frac{8}{n-4}}u(s)ds 
+ i\lambda \int_{t_{0}}^{t} e^{i(t-s)H}|u(s)|^{\frac{8}{n-4}}u(s)ds.
\end{aligned}
\end{equation}
Since $I$ is unexceptional, the first term on the right-hand side is small in the $W$-norm.
Also, by Sobolev and \eqref{strichartz}, the third term satisfies
\begin{equation}\label{estimate duhamel on half}
\begin{aligned}
\|\int_{t_{0}}^{t} e^{i(t-s)H}|u(s)|^{\frac{8}{n-4}}u(s)ds\|_{W(I^{2})}
&\leq C\|\int_{t_{0}}^{t} e^{i(t-s)H}|u(s)|^{\frac{8}{n-4}}u(s)ds\|_{M(I^{2})} \\
&\leq C\||u(s)|^{\frac{8}{n-4}}u\|_{N(I^{2})} \\
&\leq C\|u\|_{W(I^{2})}^{\frac{n+4}{n-4}} \leq C\eta^{\frac{n+4}{n-4}}.
\end{aligned}
\end{equation}
Define
\begin{equation}
v(t) = \int_{t_{-}}^{t_{0}}e^{i(t-s)H}|u(s)|^{\frac{8}{n-4}}u(s)ds.
\end{equation}
Then, by \eqref{exceptional}, \eqref{half}, and \eqref{estimate duhamel on half}, choosing $\eta$ sufficiently small, we obtain
\begin{equation}\label{es:9-step2-2}
\begin{aligned}
\|v\|_{W(I^{2})}
&\geq \|u\|_{W(I^{2})} - C\eta^{\frac{n+4}{n-4}} - \|u_{-}\|_{W(I^{2})} \\
&\geq \dfrac{1}{8}\eta - \eta^{K_{2}} - C\eta^{\frac{n+4}{n-4}} \geq \dfrac{1}{16}\eta.
\end{aligned}
\end{equation}
Also, $v$ solves the linear equation
\begin{equation}\label{linear equation}
i\partial_{t}v + \Delta^{2}v + Vv = 0.
\end{equation}
Furthermore,
\begin{equation}\label{es:9-step2-0}
\begin{aligned}
\|v\|_{L^{\infty}(I, \dot{H}^{2})}
\leq C\|v(t_{0})\|_{\dot{H}^{2}}
\leq C\lambda^{-1}\left( \|u(t_{0})\|_{\dot{H}^{2}} + \|u(t_{-})\|_{\dot{H}^{2}}\right)
\leq 2C\lambda^{-1}\mathcal{E}^{\frac{1}{2}}.
\end{aligned}
\end{equation}
Using \eqref{strichartz}, we also have
\begin{equation}\label{es:9-step2-1}
\begin{aligned}
&\|u\|_{M(I)} \leq C\|u(t_{0})\|_{ \dot{H}^{2}} + C\|u\|_{W(I)}^{\frac{n+4}{n-4}} \leq C\mathcal{E}^{\frac{1}{2}} + C\eta^{\frac{n+4}{n-4}}, \\
&\|u_{-}\|_{M(I)} \leq C\|u(t_{-})\|_{ \dot{H}^{2}} \leq C\mathcal{E}^{\frac{1}{2}}.
\end{aligned}
\end{equation}
Hence, by \eqref{divided duhamel}, \eqref{estimate duhamel on half}, and \eqref{es:9-step2-1},
\begin{equation}\label{es:9-step2-3}
\begin{aligned}
\|v\|_{M(I^{2})}
&\leq \|u\|_{M(I^{2})} + \|u_{-}\|_{M(I^{2})} + \|\int_{t_{0}}^{t} e^{i(t-s)H}|u(s)|^{\frac{8}{n-4}}u(s)ds\|_{M(I^{2})} \\
&\leq C\mathcal{E}^{\frac{1}{2}} + C\eta^{\frac{n+4}{n-4}} + C\mathcal{E}^{\frac{1}{2}} +  C\eta^{\frac{n+4}{n-4}}
\leq 3C\mathcal{E}^{\frac{1}{2}}.
\end{aligned}
\end{equation}
By the Gagliardo--Nirenberg inequality,
\begin{equation}\label{es:9-step2-4}
\|u\|_{W(I)} \lesssim \|u\|_{M(I)}^{\frac{1}{2}}\|u\|_{Z(I)}^{\frac{1}{2}}
\end{equation}
for any $u \in M(I)$. Thus, by \eqref{es:9-step2-2}, \eqref{es:9-step2-3}, and \eqref{es:9-step2-4},
\begin{equation}\label{v is small in z}
\begin{aligned}
\|v\|_{Z(I^{2})}
\geq \|v\|_{W(I^{2})}^{2}\dfrac{1}{C\|v\|_{M(I^{2})}}
\geq \left(\dfrac{1}{16}\eta\right)^{2}\dfrac{1}{3C\mathcal{E}^{1/2}}
\geq C\eta^{2}\mathcal{E}^{-\frac{1}{2}}.
\end{aligned}
\end{equation}

Now define
\begin{equation}\label{df:v_av}
v_{av}(t,x) = \dfrac{1}{L}\int_{B_{0}(2)} v(t, x+ry)\chi (y) dy,
\end{equation}
where $\chi$ is the cutoff used in \eqref{localize mass}, $r = \eta^{n + 5}|I|^{\frac{1}{4}}$, and $L = \int \chi dx$.
We claim that
\begin{equation}\label{additional regurality}
\|v - v_{av}\|_{Z(I^{2})} \lesssim \mathcal{E}^{\frac{n+4}{2(n-4)}}\eta^{\frac{2n + 10}{n + 4}}.
\end{equation}

Let $\tau_{k}g(x) = g(x + k)$.
We first prove this for $5 \leq n \leq 12$. By \eqref{lplq for H} and H\"older,
\begin{align*}
\|v - \tau_{k}v\|_{L^{\infty}(I^{2}, L^{\frac{2(n+4)}{n-4}})}
&\leq \sup_{t \in I^{2}} \|\int_{t_{-}}^{t_{0}}e^{i(t-s)H}\left( f(u(s)) - f(\tau_{k}u(s)) \right) ds\|_{L^{\frac{2(n+4)}{n-4}}} \\
&\leq C \sup_{t \in I^{2}} \int_{t_{-}}^{t_{0}}|t - s|^{-\frac{2n}{n+4}}\|f(u(s)) - f(\tau_{k}u(s))\|_{L^{\frac{2(n+4)}{n+12}}}ds \\
&\leq C |I|^{-\frac{n+4}{n-4}}
\|\left( |u|^{\frac{8}{n-4}} + |\tau_{k}u|^{\frac{8}{n-4}} \right) |u - \tau_{k}u|\|_{L^{\infty}([t_{-}, t_{0}], L^{\frac{2(n+4)}{n+12}})} \\
&\leq C|I|^{-\frac{n+4}{n-4}} \|u\|_{L^{\infty}_{t}L^{2^{\sharp}}_{x}}^{\frac{8}{n-4}}
\|u - \tau_{k}u\|_{L^{\infty}_{t}L^{2^{\sharp}}_{x}}^{\frac{12 - n}{n+4}}
\|u - \tau_{k}u\|_{L^{\infty}_{t}L^{\frac{2n}{n-2}}_{x}}^{\frac{2(n-4)}{n+4}} \\
&\leq C|I|^{-\frac{n+4}{n-4}}\mathcal{E}^{\frac{-n^{2} + 24n -16}{2(n^{2}-16)}}
\|u - \tau_{k}u\|_{L^{\infty}_{t}L^{\frac{2n}{n-2}}_{x}}^{\frac{2(n-4)}{n+4}}.
\end{align*}
Here we used $\|u\|_{L^{\infty}_{t}L^{2^{\sharp}}_{x}} \leq \mathcal{E}^{\frac{1}{2}}$.
Also, by Sobolev,
\begin{equation}\label{es:9-step2-5}
\|u - \tau_{k}u \|_{L^{\infty}(I, L^{\frac{2n}{n-2}})} \leq |k|\|\nabla u\|_{L^{\infty}(I, L^{\frac{2n}{n-2}})} \leq C|k|\mathcal{E}^{\frac{1}{2}}.
\end{equation}
Hence \eqref{df:v_av} and \eqref{es:9-step2-5} yield
\begin{equation}\label{es:9-step2-6}
\begin{aligned}
\|v - v_{av}\|_{Z(I^{2})}
&\leq \|\dfrac{1}{L} \int_{B_{0}(2)} \left( v(t, x) - v(t, x + ry)\right) \chi (y) dy \|_{Z(I^{2})} \\
&\leq \dfrac{1}{L} \int_{B_{0}(2)} \chi (y) \|v - \tau_{ry}v\|_{Z(I^{2})} dy \\
&\leq C|I|^{\frac{n-4}{2(n+4)}} \int_{B_{0}(2)} \chi (y) \|v - \tau_{ry}v\|_{L^{\infty}(I^{2}, L^{\frac{2(n+4)}{n-4}})}dy \\
&\leq C|I|^{-\frac{n-4}{2(n+4)}} \mathcal{E}^{\frac{n+4}{2(n-4)}}r^{\frac{2(n+4)}{n+4}}\int_{B_{0}(2)}  \chi (y) y dy \\
&\leq C\left( r|I|^{-\frac{1}{4}}\right)^{\frac{2(n-4)}{n+4}}\mathcal{E}^{\frac{n+4}{2(n-4)}}.
\end{aligned}
\end{equation}
Since $\eta < 1$, \eqref{additional regurality} follows for $5 \leq n \leq 12$.

Next suppose $n \geq 13$. We first estimate the gradient of $v$.
Using \eqref{lplq for H}, \eqref{eq1}, and \eqref{eq2},
\begin{equation}\label{es:9-step2-7}
\begin{aligned}
\|\nabla v\|_{L^{\infty}(I^{2}, L^{\frac{2n}{n-6}})}
&\leq \int_{t_{-}}^{t_{0}}\|\nabla e^{i(t-s)H}f(u(s))\|_{L^{\infty}(I^{2}, L^{\frac{2n}{n-6}})} ds \\
&\lesssim \int_{t_{-}}^{t_{0}}\|e^{i(t-s)H}H^{1/4}f(u)\|_{L^{\infty}(I^{2}, L^{\frac{2n}{n-6}})} ds \\
&\lesssim \int_{t_{-}}^{t_{0}}|t-s|^{-\frac{3}{2}} \|H^{1/4}f(u)\|_{L^{\infty}(I^{2}, L^{\frac{2n}{n+6}})} ds \\
&\lesssim \int_{t_{-}}^{t_{0}}|t-s|^{-\frac{3}{2}} \|\nabla f(u)\|_{L^{\infty}(I^{2}, L^{\frac{2n}{n+6}})} ds \\
&\lesssim |I|^{-\frac{1}{2}}\||u|^{\frac{8}{n-4}}\|_{L^{\infty}(I^{2}, L^{\frac{n}{4}})}\|\nabla u \|_{L^{\infty}(I^{2}, L^{\frac{2n}{n-2}})} \\
&\leq C|I|^{-\frac{1}{2}}\mathcal{E}^{\frac{n+4}{2(n-4)}}.
\end{aligned}
\end{equation}
Since \eqref{es:9-step2-0} and Sobolev imply $\|\nabla v\|_{L^{\infty}(I^{2}, L^{\frac{2n}{n-2}})} \lesssim \|\Delta v \|_{L^{\infty}(I^{2}, L^{2})} \leq C\mathcal{E}^{1/2}$, interpolation gives
\begin{equation}\label{es:9-step2-8}
\|\nabla v\|_{L^{\infty}(I^{2}, L^{2^{\sharp}})} \leq \|\nabla v\|_{L^{\infty}(I^{2}, L^{\frac{2n}{n-2}})}^{\frac{1}{2}}\|\nabla v\|_{L^{\infty}(I^{2}, L^{\frac{2n}{n-6}})}^{\frac{1}{2}}
\leq C\mathcal{E}^{\frac{n}{2(n-4)}}|I|^{-\frac{1}{4}}.
\end{equation}
Then Sobolev together with \eqref{es:9-step2-7} and \eqref{es:9-step2-8} yields
\begin{align*}
\|v - \tau_{k}v\|_{L^{\infty}(I^{2}, L^{\frac{2(n+4)}{n-4}})}
&\leq \|v - \tau_{k}v\|_{L^{\infty}(I^{2}, L^{\frac{2n}{n-8}})}^{\frac{n-4}{n+4}} \|v - \tau_{k}v\|_{L^{\infty}(I^{2}, L^{2^{\sharp}})}^{\frac{8}{n+4}} \\
&\lesssim \|\nabla v\|_{L^{\infty}(I^{2}, L^{\frac{2n}{n-6}})}^{\frac{n-4}{n+4}} \|v - \tau_{k}v\|_{L^{\infty}(I^{2}, L^{2^{\sharp}})}^{\frac{8}{n+4}} \\
&\leq C|I|^{-\frac{n-4}{2(n+4)}}\mathcal{E}^{\frac{n^{2} + 8n -16}{2(n^{2}-16)}}\left( |k||I|^{-\frac{1}{4}} \right)^{\frac{8}{n+4}}.
\end{align*}
Arguing as in \eqref{es:9-step2-6}, we obtain
\begin{equation}\label{es:9-step2-9}
\|v - v_{av}\|_{Z(I^{2})} \leq C\mathcal{E}^{\frac{n^{2} + 8n -16}{2(n^{2}-16)}}\left( r|I|^{-\frac{1}{4}} \right)^{\frac{8}{n+4}}.
\end{equation}
Combining \eqref{es:9-step2-6} and \eqref{es:9-step2-9}, we get \eqref{additional regurality} for all $n \geq 5$.

Therefore, by \eqref{v is small in z} and \eqref{additional regurality}, choosing $\eta < \mathcal{E}^{-5n}\eta_{1}$ sufficiently small,
\begin{equation}\label{es:9-step2-10}
\begin{aligned}
\|v_{av}\|_{Z(I^{2})}
&\geq \|v\|_{Z(I^{2})} - \|v-v_{av}\|_{Z(I^{2})} \\
&\geq C\eta^{2}\mathcal{E}^{-\frac{1}{2}} -  C\mathcal{E}^{\frac{n+4}{2(n-4)}}\eta^{\frac{2n + 10}{n + 4}} \\
&\geq C\eta^{2}\mathcal{E}^{-\frac{1}{2}}.
\end{aligned}
\end{equation}
On the other hand, by \eqref{es:9-step2-0} and Sobolev,
\begin{equation}\label{es:9-step2-11}
\|v_{av}\|_{L^{2^{\sharp}}(I^{2}\times \mathbb{R}^{n})} \lesssim \|v_{av}\|_{L^{\infty}(I^{2},\dot{H}^{2})}\left( \int_{I^{2}} dx \right)^{\frac{n-4}{2n}} \lesssim |I|^{\frac{n-4}{2n}}\mathcal{E}^{\frac{1}{2}}.
\end{equation}
Thus, by \eqref{es:9-step2-10} and \eqref{es:9-step2-11},
\[
\|v_{av}\|_{L^{\infty}(I^{2}\times \mathbb{R}^{n})}
\geq \|v_{av}\|_{Z(I^{2})}^{\frac{n+4}{4}}\|v_{av}\|_{L^{2^{\sharp}}(I^{2}\times \mathbb{R}^{n})}^{-\frac{n}{4}}
\geq
C\eta^{\frac{n+4}{2}}|I|^{-\frac{n-4}{8}}\mathcal{E}^{-\frac{n+2}{4}}.
\]
Hence there exists a point $(t_{0}, x_{0}) \in I^{2}\times \mathbb{R}^{n}$ such that
\begin{equation}\label{es:9-step2-12}
\left| \int_{B_{0}(2)} \chi(y) v(t_{0}, x_{0} + ry) dy \right| \geq \dfrac{1}{2}C\eta^{\frac{n+4}{2}}|I|^{-\frac{n-4}{8}}\mathcal{E}^{-\frac{n+2}{4}}.
\end{equation}
By \eqref{es:9-step2-12} and H\"older,
\begin{equation}\label{es:9-step2-13}
\begin{aligned}
M\left(v(t_{0}), B_{x_{0}}(2r)\right)
&= r^{n}\int_{\mathbb{R}^{n}}|v(t_{0}, x + ry)|^{2}\chi^{4}\left(\frac{y}{2}\right)dy \\
&\geq r^{n} \int_{B_{0}(2)} |v(t_{0}, x + ry)|^{2} dy \\
&\geq r^{n}\left( \int_{\mathbb{R}^{n}} \chi^{2}(y) dy\right)^{-1} \left| \int_{B_{0}(2)} \chi(y)v(t_{0}, x_{0} + ry) dy \right|^{2} \\
&\geq C\eta^{K_{1}}|I|\mathcal{E}^{-\frac{n+2}{2}},
\end{aligned}
\end{equation}
where $K_{1} = n^{2} + 6n + 4$.

Since $v$ solves \eqref{linear equation}, \eqref{estimate for local mass 2} yields
\begin{equation}\label{es:9-step2-14}
\left| \partial_{t}\left( M(v(t), B_{x_{0}}(2\eta^{-K_{1}}|I|^{\frac{1}{4}})) \right)^{\frac{3}{4}} \right| \leq C\mathcal{E}^{\frac{3}{4}}\eta^{K_{1}}|I|^{-\frac{1}{4}}
\end{equation}
for any $t \in I$.
Integrating \eqref{es:9-step2-14} over $I$ and using \eqref{es:9-step2-13}, we obtain for any $t \in I$,
\begin{equation}\label{es:9-step2-14-2}
\begin{aligned}
M\left(v(t), B_{x_{0}}(2\eta^{-K_{1}}|I|^{\frac{1}{4}}) \right)
&\geq \left\{ M\left(v(t_{0}), B_{x_{0}}(2\eta^{-K_{1}}|I|^{\frac{1}{4}}) \right)^{\frac{3}{4}} - C\mathcal{E}^{\frac{3}{4}}\eta^{K_{1}}|I|^{\frac{3}{4}} \right\}^{\frac{4}{3}}\\
&\geq  C\eta^{K_{1}}|I|\mathcal{E}^{-\frac{n+2}{2}}.
\end{aligned}
\end{equation}

Since $I$ is unexceptional, \eqref{exceptional} yields
\[
\inf_{\tau \in I} \|u_{-}(\tau)\|_{L^{\frac{2(n+4)}{n-4}}} \left( \int_{I} d\tau \right)^{\frac{n-4}{2(n+4)}} \leq \|u_{-}\|_{Z(I)} \leq C\eta^{K_{2}}.
\]
Hence there exists $\tau \in I$ such that
\begin{equation}\label{es:9-step2-15}
\|u_{-}(\tau)\|_{L^{\frac{2(n+4)}{n-4}}} \leq C|I|^{-\frac{n-4}{2(n+4)}}\eta^{K_{2}}.
\end{equation}
By \eqref{es:9-step2-15} and H\"older, with $R = 2\eta^{-K_{1}}|I|^{\frac{1}{4}}$,
\begin{equation}\label{es:9-step2-16}
\begin{aligned}
M\left(u_{-}(\tau), B_{x_{0}}(2\eta^{-K_{1}}|I|^{\frac{1}{4}})\right)
&\leq \left( \int_{\mathbb{R}^{n}}|u_{-}(\tau)|^{\frac{2(n+4)}{n-4}} dx\right)^{\frac{n-4}{n+4}}
\left( \int_{\mathbb{R}^{n}}\chi^{\frac{n+4}{2}}_{R} dx\right)^{\frac{8}{n+4}} \\
&\leq C\|u_{-}(\tau)\|_{L^{\frac{2(n+4)}{n-4}}}^{2}R^{\frac{8n}{n+4}}\\
&\leq C\eta^{2K_{2}-\frac{8K_{1}n}{n+4}}|I| \leq C\eta^{8K_{1}}|I|.
\end{aligned}
\end{equation}
Applying \eqref{estimate for local mass 2} once again, together with \eqref{es:9-step2-16}, we obtain for all $t \in I$,
\begin{equation}\label{es:9-step2-17}
M\left(u_{-}(t), B_{x_{0}}(2\eta^{-K_{1}}|I|^{\frac{1}{4}})\right) \leq C|I|\eta^{\frac{4}{3}K_{1}}\mathcal{E}.
\end{equation}
Since $u(t_{0}) = u_{-}(t_{0}) + v(t_{0})$, combining \eqref{es:9-step2-14-2} and \eqref{es:9-step2-17},
\begin{align*}
C\eta^{K_{1}}|I|\mathcal{E}^{-\frac{n+2}{2}}
&\lesssim M\left(v(t_{0}), B_{x_{0}}(2\eta^{-K_{1}}|I|^{\frac{1}{4}}) \right) \\
&\lesssim M\left(u(t_{0}), B_{x_{0}}(2\eta^{-K_{1}}|I|^{\frac{1}{4}}) \right) + M\left(u_{-}(t_{0}), B_{x_{0}}(2\eta^{-K_{1}}|I|^{\frac{1}{4}}) \right) \\
&\lesssim M\left(u(t_{0}), B_{x_{0}}(2\eta^{-K_{1}}|I|^{\frac{1}{4}}) \right) + |I|\eta^{\frac{4}{3}K_{1}}\mathcal{E}.
\end{align*}
Thus, choosing $\eta$ sufficiently small,
\begin{equation}\label{es:9-step2-18}
M\left(u(t_{0}), B_{x_{0}}(2\eta^{-K_{1}}|I|^{\frac{1}{4}})\right) \geq C\eta^{K_{1}}|I|\mathcal{E}^{-\frac{n+2}{2}}.
\end{equation}
Finally, \eqref{conclusion of step9-2} follows from \eqref{estimate for local mass 2} and \eqref{es:9-step2-18}.
\end{proof}

\begin{step}
Let $u \in C([t_{-}, t_{+}], H^{2}_{rad})$ be a radial solution to \eqref{duhamel2}, and let $I$ be an unexceptional interval.
Then
\begin{equation}\label{conclusion of step9.3}
\int_{I}\int_{B_{0}(2\eta^{-4K_{1}}|I|^{\frac{1}{4}})}\dfrac{|u(t,x)|^{\frac{2n}{n-4}}}{|x|}dx \geq C\eta^{13K_{1}}\mathcal{E}^{-4n}|I|^{\frac{3}{4}},
\end{equation}
where $K_{1} = n^{2} + 6n +4$, and $C$ depends only on $n$ and $\lambda$.
\end{step}

\begin{proof}
Set $R = 2\eta^{-K_{1}}|I|^{\frac{1}{4}}$. Then \eqref{conclusion of step9-2} and H\"older's inequality give
\begin{equation}\label{es:9-step3-1}
\begin{aligned}
\int_{B_{x_{0}}(R)} |u|^{2^{\sharp}} dx
&\geq M(u, 2\eta^{-K_{1}}|I|^{\frac{1}{4}})^{\frac{n}{n-4}} \left( \int_{B_{x_{0}}(R)} \chi_{R}^{n}(x) dx \right)^{-\frac{4}{n}} \\
&\geq C\eta^{\frac{5K_{1}n}{n-4}}\mathcal{E}^{-\frac{n(n+2)}{2(n-4)}} \geq C\eta^{9K_{1}}\mathcal{E}^{-4n}.
\end{aligned}
\end{equation}
Next we show that $|x_{0}| \leq \eta^{-4K_{1}}|I|^{\frac{1}{4}}$. If not, then there exist at least $\eta^{-3(n-1)K_{1}}/ 4^{n-1}$ pairwise disjoint balls of radius $2\eta^{-K_{1}}|I|^{\frac{1}{4}}$ centered at distance $|x_{0}|$ from the origin.
Hence, using radial symmetry and the facts that $\lambda > 0$ and $V \geq 0$, \eqref{es:9-step3-1} implies for any $t \in I$,
\begin{equation}\label{es:9-step3-2}
\dfrac{2^{\sharp}}{\lambda} E(u(t))
\geq \|u\|_{L^{2^{\sharp}}}^{2^{\sharp}}
\geq \dfrac{1}{4^{n-1}}\eta^{-3(n-1)K_{1}}C\eta^{9K_{1}}\mathcal{E}^{-4n}
\geq C\eta^{-2K_{1}}\mathcal{E}^{-4n}.
\end{equation}
If $\eta$ is sufficiently small, \eqref{es:9-step3-2} contradicts $E(u(t)) \lesssim \mathcal{E}$.
Therefore $|x_{0}| \leq \eta^{-4K_{1}}|I|^{\frac{1}{4}}$, and hence
\[
B_{x_{0}}(2\eta^{-K_{1}}|I|^{\frac{1}{4}}) \subset B_{0}(2\eta^{-4K_{1}}|I|^{\frac{1}{4}}).
\]
Thus, for every $t\in I$,
\begin{equation}\label{es:9-step3-3}
M\left(u(t), B_{0}(2\eta^{-4K_{1}}|I|^{\frac{1}{4}})\right) \geq C\eta^{K_{1}}\mathcal{E}^{-\frac{n+2}{2}}|I|
\end{equation}
provided $\eta < \mathcal{E}^{-5n}\eta_{1}$, where $\eta_{1}$ is sufficiently small depending only on $n$ and $\lambda$.
Now redefine $R = 2\eta^{-4K_{1}}|I|^{\frac{1}{4}}$. Then H\"older's inequality yields
\begin{align*}
M\left(u(t), B_{0}(2\eta^{-4K_{1}}|I|^{\frac{1}{4}})\right)
&\leq \left( \int_{B_{0}(R)} \dfrac{|u|^{2^{\sharp}}}{|x|} \right)^{\frac{n-4}{n}}
\left( \int_{B_{0}(R)} |x|^{-\frac{n-4}{4}}\chi_{R}^{n} \right)^{\frac{4}{n}} \\
&\leq CR^{\frac{5n -4}{n}}\left( \int_{B_{0}(R)} \dfrac{|u|^{2^{\sharp}}}{|x|} \right)^{\frac{n-4}{n}}.
\end{align*}
Combining this with \eqref{es:9-step3-3}, we conclude
\begin{align*}
\int_{B_{0}(R)} \dfrac{|u|^{2^{\sharp}}}{|x|}
\geq C M\left(u(t), B_{0}(2\eta^{-4K_{1}}|I|^{\frac{1}{4}})\right)^{\frac{n}{n-4}} R^{-\frac{5n-4}{n-4}}
\geq C\eta^{13K_{1}}\mathcal{E}^{-4n}|I|^{\frac{3}{4}}.
\end{align*}
\end{proof}

\begin{step}\label{step4}
Let $u \in C([t_{-}, t_{+}], H^{2}_{rad})$ be a radial solution to \eqref{duhamel2}, and let
\[
I = \bigcup_{j_{1} \leq j \leq j_{2}}I_{j}
\]
be a union of consecutive unexceptional intervals for $u$.
Then there exists $j_{1} \leq j_{0} \leq j_{2}$ such that
\begin{equation}\label{conclusion of step9.4}
|I_{j_{0}}| \geq K|I|,
\end{equation}
where $K = C\eta^{100K_{1}}\mathcal{E}^{-20n}$, and $C>0$ depends only on $n$ and $\lambda$.
\end{step}

\begin{proof}
By \eqref{Morawetz} and \eqref{conclusion of step9.3},
\begin{equation}\label{es:9-step4-1}
\begin{aligned}
C\eta^{13K_{1}}\mathcal{E}^{-4n}\sum_{j_{1} \leq j \leq j_{2}}|I_{j}|^{\frac{3}{4}}
&\leq \sum_{j}\int_{I_{j}}\int_{B_{0}(2\eta^{-4K_{1}}|I_{j}|^{\frac{1}{4}})} \dfrac{|u|^{2^{\sharp}}}{|x|} dx \\
&\leq \int_{I}\int_{B_{0}(2\eta^{-4K_{1}}|I|^{\frac{1}{4}})}\dfrac{|u|^{2^{\sharp}}}{|x|} dx \\
&\leq C\eta^{-12K_{1}}\left( \mathcal{E} + \mathcal{E}^{\frac{n}{n-4}}\right)|I|^{\frac{3}{4}}.
\end{aligned}
\end{equation}
Set $\tilde{K} = C\mathcal{E}^{5n}\eta^{-25K_{1}}$. Then \eqref{es:9-step4-1} implies
\begin{equation}\label{es:9-step4-2}
\begin{aligned}
\left( \max_{j}|I_{j}| \right)^{-\frac{1}{4}}\sum_{j}|I_{j}|
\leq \sum_{j}|I_{j}|^{\frac{3}{4}}
\leq \tilde{K}|I|^{\frac{3}{4}}
\leq \tilde{K}\left( \sum_{j}|I_{j}| \right)^{\frac{3}{4}}.
\end{aligned}
\end{equation}
It is immediate from \eqref{es:9-step4-2} that \eqref{conclusion of step9.4} follows.
\end{proof}

The following proposition is due to Killip, Visan, and Zhang \cite{Killip}.

\begin{proposition}\label{prop9-2}
Assume that an interval $I$ is covered by finitely many subintervals $I_{1},...,I_{N}$.
Suppose that for every collection of consecutive intervals $\{ I_{j} : j \in \mathcal{J}\}$, there exists $j_{*} \in \mathcal{J}$ such that
\[
 |I_{j_{*}}| \geq K\big|\bigcup_{j \in \mathcal{J}} I_{j}\big|
\]
for some $K > 0$.
Then there exist $M \geq \text{ln}(N) / \text{ln}(2K^{-1})$ and distinct indices $j_{1},...j_{M}$ such that for any $l < k$,
\begin{equation}\label{conclusion of prop9-2}
\begin{aligned}
&|I_{j_{1}}| \geq 2|I_{j_{2}}| \geq \cdots \geq 2^{M-1}|I_{j_{M}}|,\\
&\text{dist}(I_{j_{l}},\  I_{j_{k}})  \leq (2K)^{-1}|I_{j_{l}}|.
\end{aligned}
\end{equation}
\end{proposition}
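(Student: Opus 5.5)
The plan is a purely combinatorial induction on the collection of intervals, following Killip--Visan--Zhang. No PDE input is needed. We may assume $K \le 1/2$; otherwise the conclusion is trivial, or the hypothesis forces $N=1$. We may also take the $I_j$ consecutive and non-overlapping, ordered left to right.

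The idea is to build the indices $j_1, j_2, \dots$ greedily while shrinking a window of consecutive intervals. Set $\mathcal{J}_0 = \{1,\dots,N\}$ and let $J_0 = \bigcup_{j\in\mathcal{J}_0} I_j$. By hypothesis choose $j_1 \in \mathcal{J}_0$ with $|I_{j_1}| \ge K|J_0|$. Removing $I_{j_1}$ leaves at most two runs of consecutive intervals, one on the left and one on the right.

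The key step is to pass to a sub-run adjacent to, or near, $I_{j_1}$ whose total length is at most $|I_{j_1}|/2$. To do this, within the left (or right) run consider the intervals ordered outward from $I_{j_1}$. Let $\mathcal{J}_1$ be the maximal initial block, starting next to $I_{j_1}$, whose union has length at most $|I_{j_1}|/2$, choosing whichever side retains more intervals.

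Every interval not in $\mathcal{J}_1 \cup \{j_1\}$ on that side lies beyond a union of length greater than $|I_{j_1}|/2$. Inside $J_0$, of length at most $|I_{j_1}|/K$, there can be at most about $2K^{-1}$ such disjoint blocks of length at least $|I_{j_1}|/2$. This is the place where $\ln(2K^{-1})$ enters, and I expect the counting here to be the main obstacle.

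More precisely, I would instead partition each side into consecutive blocks of total length about $|I_{j_1}|/2$ (a block may exceed this only by a single interval, which is fine). There are at most $2K^{-1}$ blocks. Pick the block $\mathcal{J}_1$ with the most intervals, so that $\#\mathcal{J}_1 \ge (N-1)/(2K^{-1})$ roughly.

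All elements of $\mathcal{J}_1$ lie in $J_0$. Hence $\mathrm{dist}(I_{j_1}, I_j) \le |J_0| \le K^{-1}|I_{j_1}|$ for $j \in \mathcal{J}_1$, which is compatible with the bound $(2K)^{-1}|I_{j_l}|$ after adjusting the constant by choosing $K$ in the hypothesis slightly smaller. Meanwhile $|\bigcup_{\mathcal{J}_1} I_j| \le |I_{j_1}|/2$, up to the one-interval overshoot, which I would handle by discarding the last interval.

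Iterate: apply the hypothesis to $\mathcal{J}_1$ to obtain $j_2$ with $|I_{j_2}| \le |\bigcup_{\mathcal{J}_1} I_j| \le |I_{j_1}|/2$, then form $\mathcal{J}_2 \subset \mathcal{J}_1$, and so on. Because of the nesting $\mathcal{J}_k \subset \mathcal{J}_{k-1} \subset \cdots$, each later $I_{j_k}$ lies inside the window of every earlier $j_l$. This gives $\mathrm{dist}(I_{j_l}, I_{j_k}) \le |\bigcup_{\mathcal{J}_{l-1}} I_j| \le K^{-1}|I_{j_l}|$. Halving at each step gives $|I_{j_l}| \ge 2^{k-l}|I_{j_k}|$.

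The process continues as long as $\mathcal{J}_k$ is nonempty. Since $\#\mathcal{J}_k \gtrsim N (2K^{-1})^{-k}$, the number of steps $M$ satisfies $M \ge \ln N / \ln(2K^{-1})$. The delicate point is bookkeeping the constants, namely the factor $2$ versus $K^{-1}$, so that exactly $(2K)^{-1}$ and $\ln(2K^{-1})$ come out. I would fix this by choosing the blocks of length $|I_{j_1}|/2$ and measuring distance only within $J_{l-1}$, whose length is at most $|I_{j_l}|/K$; if needed I would accept a harmless constant change.
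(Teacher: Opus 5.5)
Your skeleton is the natural argument for this result, which the paper quotes from Killip--Visan--Zhang without proof. You take $j_1$ from the hypothesis, cut the remaining intervals on each side into consecutive blocks closed as soon as their length reaches $|I_{j_1}|/2$ (at most $2K^{-1}$ blocks), keep the most populous block minus its last interval, and recurse. This correctly produces distinct indices and the halving chain, since $|I_{j_{k+1}}|\le|\bigcup_{j\in\mathcal{J}_k}I_j|<\frac{1}{2}|I_{j_k}|$, and nesting gives a distance bound. The gap is that it does not give the constants in the statement, and the repairs you suggest do not work.

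First, nesting only yields $\mathrm{dist}(I_{j_l},I_{j_k})\le|\bigcup_{j\in\mathcal{J}_{l-1}}I_j|-|I_{j_l}|\le(K^{-1}-1)|I_{j_l}|$. This exceeds $(2K)^{-1}|I_{j_l}|$ whenever $K<1/2$.
\begin{itemize}
\item Replacing $K$ by a smaller $K'$ in the hypothesis is legitimate, but every constant in the conclusion is then expressed through $K'$ and gets worse. So this cannot produce $(2K)^{-1}$.
\item To reach $(2K)^{-1}$ you would have to choose $\mathcal{J}_1$ among blocks within distance $(2K)^{-1}|I_{j_1}|$ of $I_{j_1}$. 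Your count gives no lower bound on how many intervals those blocks contain.
\item For example, put $I_{j_1}$ at the left end of $J_0$, followed by one long interval and then a dense cluster of tiny intervals at the right end. Then $I_{j_1}$ is an admissible choice, yet almost all intervals lie at distance about $K^{-1}|I_{j_1}|$ from it.
\end{itemize}

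Second, discarding one interval per block gives only $\#\mathcal{J}_k\ge\frac{K}{2}(\#\mathcal{J}_{k-1}-1)-1$. These additive losses are not absorbed by your ``$\gtrsim$''. Solving the recursion gives $\#\mathcal{J}_k+3\ge(K/2)^k(N+3)$, hence only $M\ge\ln N/\ln(2K^{-1})-2$.

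Third, the opening reduction to $K\le 1/2$ is false. Two adjacent intervals of lengths $1$ and $1/10$ satisfy the hypothesis with $K=9/10$, and for $K>1/2$ the conclusion is not trivial. The reduction is also unnecessary, because your bound of at most $2K^{-1}$ blocks holds for every $K\le 1$.

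What your argument actually proves is the following. There exist $M\ge\ln N/\ln(2K^{-1})-2$ distinct indices with the halving property and $\mathrm{dist}(I_{j_l},I_{j_k})\le K^{-1}|I_{j_l}|$ for $l<k$. That weaker version is all the paper uses. In Step~\ref{step5} and in the final count, $K=C\eta^{100K_1}\mathcal{E}^{-20n}$ is a tiny power of $\eta$. So doubling the allowed time separation only changes constants when the local mass is transported to $t_*$ via \eqref{estimate for local mass 2}. The extra $-2$ merely turns $N\le(2K^{-1})^{M}$ into $N\le(2K^{-1})^{M+2}$. You should state and prove that version explicitly, rather than claiming the sharp constants ``after a harmless adjustment''.
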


\begin{step}\label{step5}
Let $u \in C([t_{-}, t_{+}], H^{2}_{rad})$ be a radial solution to \eqref{duhamel2}, and let $I_{j_{1}},...,I_{j_{M}}$ be pairwise disjoint unexceptional intervals satisfying \eqref{conclusion of prop9-2} with
\[
K = C\eta^{100K_{1}}\mathcal{E}^{-20n}.
\]
Then
\[
M \leq C\mathcal{E}\eta^{-5000n^{2}}\text{ln}(1/\eta),
\]
where $C$ depends only on $n$ and $\lambda$.
\end{step}

\begin{proof}
Take $t_{*}\in I_{j_{M}}$. Combining \eqref{es:9-step3-3}, \eqref{estimate for local mass 2}, and \eqref{conclusion of prop9-2}, we obtain for every $1 \leq k \leq M$,
\begin{equation}\label{es:9-step5-1}
\begin{aligned}
M\left(u(t_{*}), B_{0}(2\eta^{-101K_{1}}|I_{j_{k}}|^{\frac{1}{4}})\right)
\geq C\eta^{K_{1}}\mathcal{E}^{-\frac{n+2}{2}}|I_{j_{k}}|.
\end{aligned}
\end{equation}
Moreover, \eqref{estimate for local mass 1} gives
\begin{equation}\label{es:9-step5-2}
M\left( u(t_{*}), B_{0}(R) \right) \leq C\mathcal{E}R^{4}.
\end{equation}
Consider the annuli
\[
A_{k} = \{ x: \eta^{K_{1}}|I_{j_{k}}|^{\frac{1}{4}} \leq |x| \leq 2\eta^{-101K_{1}}|I_{j_{k}}|^{\frac{1}{4}} \}.
\]
By \eqref{es:9-step5-1} and \eqref{es:9-step5-2},
\begin{equation}\label{es:9-step5-3}
\begin{aligned}
\int_{A_{k}}|u(t_{*}, x)|^{2}dx
&\geq  M\left(u(t_{*}), B_{0}(2\eta^{-101K_{1}}|I_{j_{k}}|^{\frac{1}{4}})\right) - \int_{|x| \leq \eta^{K_{1}}|I_{j_{k}}|^{\frac{1}{4}}}|u(t_{*},x)|^{2}dx \\
&\geq C\eta^{K_{1}}\mathcal{E}^{-\frac{n+2}{2}}|I_{j_{k}}| - C\mathcal{E}^{4K_{1}}|I_{j_{k}}| \geq C\eta^{K_{1}}\mathcal{E}^{-\frac{n+2}{2}}|I_{j_{k}}|.
\end{aligned}
\end{equation}
Hence, by H\"older,
\begin{equation}\label{es:9-step5-4}
\begin{aligned}
\int_{A_{k}}|u(t_{*},x)|^{\frac{2n}{n-4}}dx \geq C\eta^{K_{3}},
\end{aligned}
\end{equation}
where $K_{3} = 5000n^{2}$.

Now set
\[
l = 400K_{1}\text{ln}\left(\frac{1}{\eta}\right)/\text{ln}2.
\]
Then for any $k$,
\begin{equation}\label{es:9-step5-5}
\eta^{-101K_{1}}|I_{j_{1 + (k+1)l}}|^{\frac{1}{4}} \leq \eta^{K_{1}}|I_{j_{1 + kl}}|^{\frac{1}{4}}.
\end{equation}
Indeed, by \eqref{conclusion of prop9-2},
\[
\eta^{-101K_{1}}|I_{j_{1 + (k+1)l}}|^{\frac{1}{4}} \leq \eta^{-101K_{1}} 2^{-\frac{l}{4}}|I_{j_{1 + kl}}|^{\frac{1}{4}},
\]
and
\[
\text{ln}(\eta^{-101K_{1}}2^{-\frac{l}{4}}) = 101K_{1}\text{ln}\left(\frac{1}{\eta}\right) - \frac{l}{4}\text{ln}2 = \text{ln}(\eta^{K_{1}}).
\]
Thus \eqref{es:9-step5-5} holds, and therefore the sets $A_{1 + kl}$ are pairwise disjoint.
Consequently,
\[
 \mathcal{E} \geq C\sum_{kl \leq M-1} \int_{A_{1 + kl}}|u(t_{*}, x)|^{2^{\sharp}}dx \geq MC\eta^{K_{3}}l^{-1},
\]
which proves the claim.
\end{proof}

\begin{proof}[Proof of Proposition \ref{prepare main thm}]
By \eqref{bound of exceptional}, the number $N_{e}$ of exceptional intervals is bounded.
Now let $J = \bigcup_{j \in \mathcal{J}}I_{j}$ be a cluster of unexceptional intervals lying between exceptional intervals.
By Step \ref{step4}, Step \ref{step5}, and Proposition \ref{prop9-2}, choosing $\eta_{1}$ sufficiently small depending only on $n$ and $\lambda$, and taking $\eta \leq \mathcal{E}^{-5n}\eta_{1}$, we obtain
\[
 |\mathcal{J}| \leq C\mathcal{E}\eta^{-5000n^{2}}\text{ln}(1/\eta) \leq C\eta^{-C\eta^{-5000n^{2}}}.
\]
There are only $N_{e}$ such collections of unexceptional intervals, and hence the conclusion follows.
\end{proof}

\section{Scattering for the critical equation}

In this section we prove Theorem \ref{th:scattering}.

\begin{assumption}\label{as: linear scattering}
Let $H = H_{0} + V$, where $H_{0} = \Delta^{2}$ and $V$ is a real-valued potential.
\begin{enumerate}
\item[(H1)]
The wave operators
\[
W_{\pm} = s-\lim_{t \rightarrow \pm \infty} e^{itH}e^{-itH_{0}}
\]
exist on $L^{2}$.
\item[(H2)]
The inverse wave operators
\[
\Omega_{\pm} = s-\lim_{t \rightarrow \pm \infty} e^{itH_{0}}e^{-itH}P_{ac}(H)
\]
exist on $L^{2}$.
\end{enumerate}
\end{assumption}

Let $\mathbb{B}_{\infty}(X, Y)$ denote the space of compact operators from $X$ to $Y$, and write $\mathbb{B}_{\infty}(X) = \mathbb{B}_{\infty}(X, X)$.
Also, define $L^{\infty, 0}(\mathbb{R}^{n})$ to be the set of all $f \in L^{\infty}$ such that $f(x)\to 0$ as $|x|\to\infty$.

\begin{lemma}\label{lemma:scattering}
Let $V \in L^{\infty, 0}(\mathbb{R}^{n})$. Then
\[
[\psi(H_{0}) - \psi(H)]\varphi(H_{0}) \in \mathbb{B}_{\infty}(L^{2}, H^{2})
\]
for any $\psi, \varphi \in C^{\infty}_{0}(\mathbb{R})$.
\end{lemma}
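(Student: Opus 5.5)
The plan is to reduce the claim to a resolvent identity and then to the compactness of a multiplication operator sandwiched between smoothing operators. Here $V \in L^{\infty,0} \subset L^{\infty}$, so $H = H_{0} + V$ is self-adjoint on $D(H) = D(H_{0}) = H^{4}$ and bounded from below. Fix $z$ with $\mathrm{Im}\, z \neq 0$ and write $R_{0}(z) = (H_{0} - z)^{-1}$, $R(z) = (H - z)^{-1}$. The second resolvent identity gives
\[
R_{0}(z) - R(z) = R(z) V R_{0}(z),
\]
and $R(z) \in \mathbb{B}(L^{2}, H^{4}) \subset \mathbb{B}(L^{2}, H^{2})$ because $D(H) = H^{4}$ with equivalent graph norm ($V$ is bounded).

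\textbf{Step 1: the resolvent case.} Take $\varphi \in C_{0}^{\infty}$. Since $\varphi(H_{0})$ is bounded, it suffices to show that $V R_{0}(z)$ is compact on $L^{2}$; then $R(z) V R_{0}(z) \varphi(H_{0})$ is compact from $L^{2}$ into $H^{2}$. Approximate $V$ in $L^{\infty}$ by $V_{k} = V \mathbf{1}_{\{|x| \le k\}}$, which is possible since $V \to 0$ at infinity. Each $V_{k} R_{0}(z) = V_{k} (|\xi|^{4} - z)^{-1}(D)$ is Hilbert--Schmidt, because $V_{k} \in L^{2}$ and $(|\xi|^{4} - z)^{-1} \in L^{2}$ for $n \ge 5$; the $n \le 7$ range is irrelevant here. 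Alternatively one can use the Rellich--Kondrachov theorem. Norm limits of compact operators are compact, so this settles Step 1.

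\textbf{Step 2: from resolvents to $\psi$.} I would use the Helffer--Sjöstrand formula
\[
\psi(A) = \frac{1}{\pi} \int_{\mathbb{C}} \bar{\partial} \tilde{\psi}(z)\, (A - z)^{-1}\, dx\, dy,
\]
where $\tilde{\psi}$ is an almost analytic extension of $\psi$ with compact support and $|\bar{\partial} \tilde{\psi}(z)| \lesssim |\mathrm{Im}\, z|^{N}$. This yields
\[
[\psi(H_{0}) - \psi(H)]\varphi(H_{0}) = \frac{1}{\pi} \int \bar{\partial} \tilde{\psi}(z)\, R(z) V R_{0}(z) \varphi(H_{0})\, dx\, dy .
\]
The integrand is compact from $L^{2}$ to $H^{2}$ by Step 1. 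To conclude that the integral is compact, I need Bochner integrability in the $\mathbb{B}(L^{2}, H^{2})$ norm, and the compact operators form a closed subspace. For this I would bound
\[
\|R(z)\|_{L^{2} \to H^{2}} \lesssim \|(H + E)^{1/2} R(z)\|_{L^{2} \to L^{2}} \lesssim \langle z \rangle\, |\mathrm{Im}\, z|^{-1},
\]
using $\|\Delta f\|_{L^{2}}^{2} \le \langle (H + E) f, f \rangle$ for large $E$, together with $\|V R_{0}(z)\| \le \|V\|_{\infty} |\mathrm{Im}\, z|^{-1}$. The resulting singularity $|\mathrm{Im}\, z|^{-2}$ is absorbed by $|\bar{\partial} \tilde{\psi}| \lesssim |\mathrm{Im}\, z|^{2}$, and the integral is over a compact set.

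An alternative to Helffer--Sjöstrand is Stone--Weierstrass. One approximates $\psi$ uniformly on the relevant spectral interval by polynomials in $(\lambda - z)^{-1}$ and $(\lambda - \bar{z})^{-1}$. Differences of such monomials telescope into products of resolvents containing one factor $R V R_{0}$, so each is compact. The catch is that uniform convergence only controls the $L^{2} \to L^{2}$ norm, whereas we need the $L^{2} \to H^{2}$ norm. That could be repaired by writing $\psi(H) = (H + E)^{-1/2} \cdot [(H + E)^{1/2} \psi(H)]$ and approximating $g(\lambda) = (\lambda + E)^{1/2} \psi(\lambda)$, which is still compactly supported. I would nevertheless go with Helffer--Sjöstrand.

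\textbf{Main obstacle.} I expect the main difficulty to be exactly this $H^{2}$-valued control, namely establishing the bound for $\|R(z)\|_{L^{2} \to H^{2}}$ uniformly near the real axis. The compactness of $V R_{0}(z)$ itself is routine.
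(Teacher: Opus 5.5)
Your proof follows the same route as the paper's. Both use the Helffer--Sj\"ostrand formula and the second resolvent identity $R_0(z)-R(z)=R(z)VR_0(z)$. Both get compactness of $VR_0(z)$ from $V\in L^{\infty,0}$. Both then bound the integrand by an inverse power of $|\mathrm{Im}\,z|$, which $\bar\partial\tilde\psi=O(|\mathrm{Im}\,z|^N)$ absorbs, so the integral converges in operator norm into the closed subspace of compact operators. Your quadratic-form bound $\|\Delta f\|_{L^2}^2\le\langle (H+E)f,f\rangle$ is an elementary $L^2$ substitute for the paper's appeal to Proposition \ref{prop:inh:sobolev}. It gives the required $\mathbb{B}(L^2,H^2)$ control of $R(z)$ directly.

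One sub-claim in your Step 1 is false, however. The function $(|\xi|^4-z)^{-1}$ lies in $L^2(\mathbb{R}^n)$ only for $n\le 7$, because $\int_{|\xi|>1}|\xi|^{-8}\,d\xi=\infty$ when $n\ge 8$. You have the dimension condition backwards, so the Hilbert--Schmidt argument fails for every $n\ge 8$. Rely on your Rellich--Kondrachov alternative instead: $\mathbf{1}_{\{|x|\le k\}}R_0(z)$ is compact on $L^2$ because $R_0(z)\in\mathbb{B}(L^2,H^4)$. Alternatively, truncate in frequency as well. Set $g_k(\xi)=(|\xi|^4-z)^{-1}\mathbf{1}_{\{|\xi|\le k\}}$, which converges uniformly to $(|\xi|^4-z)^{-1}$. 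Then $V_kg_k(D)$ is Hilbert--Schmidt and converges in operator norm to $VR_0(z)$. With either fix the rest of your argument goes through unchanged.
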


\begin{proof}
It suffices to show that
\[
\langle H_{0} \rangle^{1/2} [ \psi(H_{0}) - \psi(H)]\varphi(H_{0}) \in \mathbb{B}_{\infty}(L^{2}).
\]
By the Helffer--Sj\"ostrand formula,
\begin{equation}\label{lemma:sca:1}
\begin{aligned}
\langle H_{0} \rangle^{1/2} &[ \psi(H_{0}) - \psi(H)]\varphi(H_{0}) \\
&= \dfrac{1}{2\pi i}\int_{\mathbb{C}}\dfrac{\partial \tilde{\varphi} }{\partial \bar{z} } (z) \langle H_{0} \rangle^{1/2} [ (H_{0} - z)^{-1} - (H - z)^{-1} ]\varphi(H_{0})dz d\bar{z},
\end{aligned}
\end{equation}
where $\tilde{\varphi} \in C_{0}^{\infty}(\mathbb{R}^{2})$ is an almost-analytic extension of $\varphi$ satisfying
\[
\partial_{\bar{z}}\tilde{\varphi}(z) = O(\langle z \rangle^{-N}|\text{Im}z|^{N})
\]
for any $N \geq 1$.

We claim that
\[
\langle H_{0} \rangle^{1/2} [ (H_{0} - z)^{-1} - (H - z)^{-1} ] \in \mathbb{B}_{\infty}(L^2).
\]
Indeed, by the resolvent identity,
\begin{align*}
&\langle H_{0} \rangle^{1/2} [ (H_{0} - z)^{-1} - (H - z)^{-1} ] \\
&= \langle H_{0} \rangle^{1/2} (H - z)^{-1}V(H_{0} - z)^{-1} \\
&= \langle H_{0} \rangle^{1/2} \langle H \rangle^{-1/2} \langle H \rangle^{1/2} (H - z)^{-1}V(H_{0} - z)^{-1}.
\end{align*}
Now $\langle H_{0} \rangle^{1/2} \langle H \rangle^{-1/2}$ and $\langle H \rangle^{1/2} (H - z)^{-1}$ are bounded on $L^{2}$, and since $V \in L^{\infty,0}(\mathbb{R}^{n})$, the operator
\[
V(H_{0} - z)^{-1} \in \mathbb{B}_{\infty}(L^{2})
\]
is compact. Therefore the integrand in \eqref{lemma:sca:1} is compact. Also, since $\varphi(H_{0}) \in \mathbb{B}(L^{2})$, the full integrand is compact.

Writing $\|\cdot\| = \|\cdot\|_{L^{2} \rightarrow L^{2}}$, the spectral theorem and Proposition \ref{prop:inh:sobolev} imply
\begin{equation}
\begin{aligned}
&\|\langle H_{0} \rangle^{1/2} [ (H_{0} - z)^{-1} - (H - z)^{-1} ]\varphi(H_{0})\|\\
&\leq \|\langle H_{0} \rangle^{1/2}(H_{0} - z)^{-1}\varphi(H_{0})\|
+ \|\langle H_{0} \rangle^{1/2}(H - z)^{-1}\varphi(H_{0})\| \\
&\lesssim \|(H_{0} - z)^{-1}\langle H_{0} \rangle^{1/2}\varphi(H_{0})\|
+ \|(H - z)^{-1}\langle H \rangle^{1/2}\varphi(H_{0})\| \\
&\lesssim \|(H_{0} - z)^{-1}\|\|\langle H_{0} \rangle^{1/2}\varphi(H_{0})\|
+ \|(H - z)^{-1}\|\|\langle H_{0} \rangle^{1/2}\varphi(H_{0})\| \\
&\lesssim \sup_{\lambda \in \sigma(H_{0})}|\lambda - z|^{-1}\|\langle H_{0} \rangle^{1/2}\varphi(H_{0})\|
+ \sup_{\lambda \in \sigma(H)}|\lambda - z|^{-1}\|\langle H_{0} \rangle^{1/2}\varphi(H_{0})\| \\
&\leq C|\text{Im}z|^{-1}.
\end{aligned}
\end{equation}
Thus the integral in \eqref{lemma:sca:1} converges in operator norm, and the result follows.
\end{proof}

\begin{proposition}\label{scattering-1}
Assume $V \in L^{\infty, 0}(\mathbb{R}^{n})$. Then the following hold:
\begin{enumerate}
\item If (H1) holds, then
\[
W_{\pm} = s-\lim_{t \rightarrow \pm \infty} e^{itH}e^{-itH_{0}}
\]
exist on $H^{2}$.
\item If (H2) holds, then
\[
\Omega_{\pm} = s-\lim_{t \rightarrow \pm \infty} e^{itH_{0}}e^{-itH}P_{ac}(H)
\]
exist on $H^{2}$.
\end{enumerate}
\end{proposition}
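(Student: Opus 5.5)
The plan is to upgrade $L^{2}$ convergence to $H^{2}$ convergence using the compactness of Lemma \ref{lemma:scattering}, an energy cutoff, and the uniform $H^{2}$ bounds on the propagators from Proposition \ref{prop:inh:sobolev} with $p=2$. Since $V\in L^{\infty}$, for $E$ large that proposition gives $\|f\|_{H^{2}}\sim\|(H+E)^{1/2}f\|_{L^{2}}\sim\|(H_{0}+E)^{1/2}f\|_{L^{2}}$. Because $e^{itH}$ and $e^{itH_{0}}$ commute with $(H+E)^{1/2}$ and $(H_{0}+E)^{1/2}$ respectively, both groups are bounded on $H^{2}$ uniformly in $t$. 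Consequently the families $e^{itH}e^{-itH_{0}}$ and $e^{itH_{0}}e^{-itH}P_{ac}(H)$ are uniformly bounded on $H^{2}$; for the second we also use that $P_{ac}(H)$ commutes with $(H+E)^{1/2}$. By a standard $\varepsilon/3$ argument, it therefore suffices to prove convergence in $H^{2}$ on a dense subset of $H^{2}$.

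For (1), take as dense set $\{\varphi(H_{0})f : f\in H^{2},\ \varphi\in C_{0}^{\infty}(\mathbb{R})\}$. Fix $\varphi$ and choose $\psi\in C_{0}^{\infty}$ with $\psi\varphi=\varphi$. Decompose
\[
e^{itH}e^{-itH_{0}}\varphi(H_{0})f = e^{itH}\psi(H)e^{-itH_{0}}\varphi(H_{0})f + e^{itH}[\psi(H_{0})-\psi(H)]\varphi(H_{0})e^{-itH_{0}}f .
\]
For the first term, $\psi(H)$ maps $L^{2}$ boundedly into $H^{2}$, since $(H+E)^{1/2}\psi(H)$ is bounded. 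This term equals $\psi(H)\,e^{itH}e^{-itH_{0}}\varphi(H_{0})f$, and by (H1) the inner factor converges in $L^{2}$, hence the term converges in $H^{2}$. For the second term, Lemma \ref{lemma:scattering} says that $K=[\psi(H_{0})-\psi(H)]\varphi(H_{0})$ is compact from $L^{2}$ to $H^{2}$. Moreover $e^{-itH_{0}}f\rightharpoonup0$ weakly in $L^{2}$ as $t\to\pm\infty$, because $H_{0}$ has purely absolutely continuous spectrum (Riemann--Lebesgue). Hence $\|Ke^{-itH_{0}}f\|_{H^{2}}\to0$, and since $e^{itH}$ is uniformly bounded on $H^{2}$, the second term tends to $0$ in $H^{2}$.

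Part (2) is symmetric. Use the dense set $\varphi(H)P_{ac}(H)f$, together with the component $(1-P_{ac}(H))f$, which is annihilated by the operator. Decompose
\[
e^{itH_{0}}e^{-itH}\varphi(H)P_{ac}f = \psi(H_{0})e^{itH_{0}}e^{-itH}\varphi(H)P_{ac}f + e^{itH_{0}}[\psi(H)-\psi(H_{0})]\varphi(H)e^{-itH}P_{ac}f .
\]
The first term converges in $H^{2}$ by (H2) together with the $L^{2}\to H^{2}$ boundedness of $\psi(H_{0})$. For the second term we need the analogue of Lemma \ref{lemma:scattering} with the roles of $H$ and $H_{0}$ swapped. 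The same Helffer--Sj\"ostrand argument gives it: the integrand becomes $\langle H\rangle^{1/2}(H_{0}-z)^{-1}V(H-z)^{-1}$, and we use the compactness of $V(H-z)^{-1}=V(H_{0}-z)^{-1}\cdot(H_{0}-z)(H-z)^{-1}$ together with the equivalence $\langle H\rangle^{1/2}\sim\langle H_{0}\rangle^{1/2}$. Weak convergence of $e^{-itH}P_{ac}(H)f$ to $0$ in $L^{2}$ also follows from Riemann--Lebesgue.

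The main obstacle is the density and approximation bookkeeping in $H^{2}$ norm. One must check that $\varphi(H_{0})f\to f$ in $H^{2}$ as $\varphi\uparrow1$, which holds by the spectral theorem applied to $(H_{0}+E)^{1/2}f$. For (2) one needs the corresponding statement with $H$, and this relies on $(H+E)^{1/2}$ commuting with $P_{ac}(H)$ and on the norm equivalence from Proposition \ref{prop:inh:sobolev}.
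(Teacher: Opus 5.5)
Your proposal is correct and follows essentially the same route as the paper. Both use uniform $H^{2}$ bounds on the propagators from Proposition \ref{prop:inh:sobolev}, reduce to energy-localized data $\varphi(H_{0})f$, and split with $\psi\varphi=\varphi$ into a $\psi(H)$ piece controlled by (H1) and a $[\psi(H_{0})-\psi(H)]\varphi(H_{0})$ piece killed by Lemma \ref{lemma:scattering} together with $e^{-itH_{0}}f\rightharpoonup 0$; the paper phrases this via Cauchy sequences rather than density, which is cosmetic. For part (2) you go slightly beyond the paper, which only says ``the same'', by noting that the role-swapped compactness of $[\psi(H)-\psi(H_{0})]\varphi(H)$ is needed and checking it via $V(H-z)^{-1}=V(H_{0}-z)^{-1}(H_{0}-z)(H-z)^{-1}$.
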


\begin{proof}
Let $u_{0} \in H^{2}$ and set $W(t) = e^{itH}e^{-itH_{0}}$. By Proposition \ref{prop:inh:sobolev},
\[
\|\langle \Delta \rangle W(t)u_{0} \|_{L^{2}} \lesssim \|e^{itH}\langle H \rangle^{1/2} e^{-itH_{0}} u_{0}\|_{L^{2}} \lesssim \|u_{0}\|_{H^{2}},
\]
so $W(t)$ is uniformly bounded on $H^{2}$ with respect to $t$.

To prove existence, it suffices to show that for any sequence $t_{n} \to \infty$, the sequence $\{W(t_{n})u_{0}\}$ is Cauchy in $H^{2}$.
Fix $\varepsilon > 0$, and choose $\varphi \in C^{\infty}_{0}$ such that $\varphi \equiv 1$ near the origin. Let $\varphi_{R}(\lambda) = \varphi(\lambda / R)$. Then
\[
\|u_{0} - \varphi_{R}(H_{0})u_{0}\|_{H^{2}}^{2}
= \int_{\sigma(H_{0})} |1 - \varphi(\lambda / R)|^{2} d\|E_{H_{0}}(\lambda)u_{0}\|^{2}_{H^{2}} \rightarrow 0,\qquad R \rightarrow \infty
\]
by the dominated convergence theorem.
In particular, for some $R \geq 1$,
\[
\sup_{n, m}\|(W(t_{n}) - W(t_{m}))(u_{0} - \varphi_{R}(H_{0})u_{0})\|_{H^{2}}
\lesssim \|u_{0} - \varphi_{R}(H_{0})u_{0}\|_{H^{2}} < \varepsilon.
\]
Hence we may replace $u_{0}$ by $u_{R} = \varphi_{R}(H_{0})u_{0}$.

Now choose $\psi \in C^{\infty}_{0}$ such that $\psi\varphi \equiv \varphi$. Then
\begin{align*}
[W(t_{n}) - W(t_{m})]u_{R}
= \psi_{R}(H)[W(t_{n}) - W(t_{m})]u_{R} + (1 - \psi_{R})(H)[W(t_{n}) - W(t_{m})]u_{R}.
\end{align*}
Since $\|\langle H_{0} \rangle^{1/2} \psi_{R}(H)\|_{L^{2} \rightarrow L^{2}} \lesssim R$, assumption (H1) implies
\[
\|\psi_{R}(H)[W(t_{n}) - W(t_{m})]u_{R}\|_{H^{2}}
\lesssim R\|[W(t_{n}) - W(t_{m})]u_{R}\|_{L^{2}} \rightarrow 0, \qquad m,n \rightarrow \infty.
\]
Moreover,
\[
(1 - \psi_{R})(H)\varphi_{R}(H_{0}) = [ \psi_{R}(H_{0}) - \psi_{R}(H) ]\varphi_{R}(H_{0}),
\]
and for any $f \in L^{2}$, one has weak convergence $e^{itH_{0}}f \rightharpoonup 0$ in $L^{2}$ as $t \to \pm \infty$. Therefore, by Lemma \ref{lemma:scattering},
\begin{align*}
\|(1 - \psi_{R})(H)[W(t_{n}) - W(t_{m})]u_{R}\|_{H^{2}}
\leq &\|[\psi_{R}(H_{0}) - \psi_{R}(H)]e^{-it_{n}H_{0}}u_{R}\|_{H^{2}} \\
&+ \|[\psi_{R}(H_{0}) - \psi_{R}(H)]e^{-it_{m}H_{0}}u_{R}\|_{H^{2}} \\
\rightarrow 0, \qquad &m,n \rightarrow \infty.
\end{align*}
Hence $\{W(t_{n})u_{R}\}$ is Cauchy in $H^{2}$.
The proof for $\Omega_{\pm}$ is the same.
\end{proof}

\begin{remark}\label{remark:scattering}
It is known from \cite{Kuroda} that if $V \in L^{\infty}$ satisfies
\[
|V(x)| \lesssim \langle x \rangle^{- \mu}
\]
for some $\mu > 1$, then Assumption \ref{as: linear scattering}, namely (H1) and (H2), holds.
\end{remark}

\begin{proposition}\label{scattering-2}
Let $V$ be a radial real-valued potential satisfying Assumptions \ref{as:wave}, \ref{as:mora}, and \ref{as:equ}. Let $n \geq 5$, and let $u \in C(\mathbb{R}, H^{2}_{rad})$ be any radial solution to \eqref{eq:NL4S} with $p = 2^{\sharp}-1$ and $\lambda > 0$.
Then there exist $u^{\pm} \in H^{2}_{rad}$ such that
\begin{equation}\label{conclusion of prop10-1}
\|u(t) - e^{itH}u^{\pm}\|_{H^{2}} \rightarrow 0,\qquad t \rightarrow \pm \infty.
\end{equation}
Moreover, $u^{\pm}$ are unique and are determined by \eqref{conclusion of prop10-1}.
Thus we may define two maps $S_{\pm} : u(0) \mapsto u^{\pm}$ from $H^{2}_{rad}$ to $H^{2}_{rad}$.
These maps are continuous on $H^{2}_{rad}$.
Furthermore, if
\[
 \|e^{itH}u^{\pm} - e^{it\Delta^{2}}u^{\pm}_{*}\|_{H^{2}} \rightarrow 0 , \qquad t \rightarrow \pm \infty,
\]
then
\begin{equation}\label{conclusion of prop10-2}
\begin{aligned}
M(u_{0}) &= M(u^{\pm}), \\
2E(u_{0}) &= \|u^{\pm}_{*}\|^{2}_{\dot{H}^{2}}.
\end{aligned}
\end{equation}
\end{proposition}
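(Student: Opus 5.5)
The plan is to follow the standard route from a global spacetime bound to scattering, as in \cite{Pau}. First we get a global Strichartz bound. By Proposition \ref{prepare main thm}, applied on arbitrary compact intervals $[t_{-},t_{+}]$ and combined with conservation of energy, we have $\|u\|_{Z([t_{-},t_{+}])}\le K$. Here $K$ depends only on $n,\lambda$ and $\sup_t\mathcal{E}(u(t))\lesssim E(u_0)$, where we use $V\ge 0$ and $\lambda>0$. Letting $t_\pm\to\pm\infty$ gives $\|u\|_{Z(\mathbb{R})}<\infty$. Next we split $\mathbb{R}$ into finitely many intervals on which $\|u\|_{Z}\le\eta$ and argue as in Proposition \ref{criterion}: the Strichartz estimate \eqref{strichartz} and \eqref{standard} on each piece give $\|u\|_{M(\mathbb{R})}+\|u\|_{W(\mathbb{R})}<\infty$, and hence $\||u|^{\frac{8}{n-4}}u\|_{N(\mathbb{R})}\lesssim\|u\|_{W(\mathbb{R})}^{\frac{n+4}{n-4}}<\infty$. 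The same argument with \eqref{standard} gives $\||u|^{\frac{8}{n-4}}u\|_{L^2(\mathbb{R},L^{\frac{2n}{n+4}})}<\infty$, which we need for the $L^2$ part.

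Existence of $u^\pm$ then goes through the Cauchy criterion. We define
\[
u^{+}=u_0+i\lambda\int_0^{\infty}e^{-isH}|u|^{\frac{8}{n-4}}u(s)\,ds .
\]
For $t_1<t_2$ we have $e^{-it_2H}u(t_2)-e^{-it_1H}u(t_1)=i\lambda\int_{t_1}^{t_2}e^{-isH}f(u)\,ds$. The $\dot H^2$ norm of this is bounded by \eqref{strichartz for scattering} applied to $h=f(u)\mathbf{1}_{[t_1,t_2]}$, giving $\lesssim\|f(u)\|_{N([t_1,t_2])}\to0$. The $L^2$ norm is bounded by the dual of \eqref{standard}, giving $\lesssim\|f(u)\|_{L^2([t_1,t_2],L^{\frac{2n}{n+4}})}\to0$. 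So $e^{-itH}u(t)$ is Cauchy in $H^2$. Since $e^{itH}$ is unitary on $L^2$ and, by the equivalence $\|H^{1/2}\cdot\|_{L^2}\sim\|\Delta\cdot\|_{L^2}$ (Proposition \ref{prop:h equ}), uniformly bounded on $\dot H^2$, this gives \eqref{conclusion of prop10-1}. Uniqueness is immediate from the same uniform bound. Radiality of $u^\pm$ holds because $H$ commutes with rotations, $V$ being radial.

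Continuity of $S_\pm$ comes from the long time perturbation result, Proposition \ref{prop:long time}. Take $\tilde u=u$, with $e=0$ and $M=\|u\|_{Z(\mathbb{R})}$ (applied on $[0,T]$ uniformly in $T$). If $v_0\to u_0$ in $H^2$, then $\|e^{itH}(v_0-u_0)\|_{W}\lesssim\|v_0-u_0\|_{\dot H^2}$ is small, so $\|f(v)-f(u)\|_{N(\mathbb{R})}$ is small by \eqref{long stability conclusion 4}. The $L^2$ piece is handled the same way, with the $L^2$-level estimate \eqref{initconti}. The Duhamel formula for $u^\pm$ then yields $S_\pm v_0\to S_\pm u_0$.

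For the identities \eqref{conclusion of prop10-2}: mass conservation together with unitarity gives $M(u^\pm)=\lim M(e^{-itH}u(t))=M(u_0)$. For the energy, we use $E(u_0)=E(u(t))$ and show that the potential and nonlinear terms vanish as $t\to\pm\infty$. Since $u(t)-e^{itH}u^\pm\to0$ in $H^2$, it suffices to show that $\int V|e^{itH}u^\pm|^2\to0$ and $\|e^{itH}u^\pm\|_{L^{2^\sharp}}\to0$. We prove both by density: for nice $g$, Proposition \ref{prop:lp-lq} with $P_{ac}(H)=I$ (Remark \ref{remark:st:wave:as}) gives $t^{-1}$ decay in $L^{2^\sharp}$, and we extend using the uniform $\dot H^2\to L^{2^\sharp}$ bound. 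The $V$-term then follows from weak H\"older \eqref{weak holder}. Consequently $2E(u_0)=\lim\|\Delta u(t)\|_{L^2}^2=\lim\|\Delta e^{it\Delta^2}u^\pm_*\|_{L^2}^2=\|u^\pm_*\|_{\dot H^2}^2$.

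I expect the main obstacle to be the first step, obtaining the global $W$/$N$ bounds from the $Z$ bound: the constant in Proposition \ref{prepare main thm} must be uniform in the interval. The density argument for the dispersive decay in $L^{2^\sharp}$ is the other delicate point.
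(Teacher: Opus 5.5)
Your proposal is correct. Existence, uniqueness and continuity of $S_\pm$ follow the paper's proof: the global $Z$ bound comes from Proposition \ref{prepare main thm} and is upgraded via Proposition \ref{criterion}; the Cauchy criterion for $e^{-itH}u(t)$ uses \eqref{strichartz for scattering} and the dual of \eqref{standard}; and continuity comes from Proposition \ref{prop:long time} with $e=0$. The differences are cosmetic, for instance the endpoint dual norm $L^{2}_tL^{\frac{2n}{n+4}}_x$ in place of the paper's $L^{\frac{2(n+4)}{n+8}}_{t,x}$. One caveat: \eqref{initconti} is local, so for $L^2$-continuity you must iterate it over the finitely many intervals where the $Z$-norms of $u$ and $v$ are small. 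This is what the paper does ``as in Proposition \ref{criterion}''.

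The genuinely different step is the energy identity. The paper extracts $t_k\to\infty$ with $\|u(t_k)\|_{L^{\frac{2(n+4)}{n-4}}}\to0$ from $u\in Z(\mathbb{R}_+)$. It then interpolates with conserved mass to get $\|u(t_k)\|_{L^{2^\sharp}}\to0$. You instead prove the linear statement $\|e^{itH}g\|_{L^{2^\sharp}}\to0$ for all $g\in H^2$. You do this via the $|t|^{-1}$ decay of Proposition \ref{prop:lp-lq} on a dense class plus the uniform $\dot H^2\to L^{2^\sharp}$ bound, and transfer it to $u(t)$ through the $H^2$ scattering already proved. Your version gives vanishing along the full limit $t\to\pm\infty$ and isolates a purely linear fact. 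The cost is reliance on the wave-operator input (Assumption \ref{as:wave} and $P_{ac}(H)=I$). The paper's version needs only the spacetime bound and mass conservation, and a subsequence suffices for the identity.

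A small point you share with the paper: \eqref{weak holder} requires $p>1$, so it does not give $\int V|u|^2\lesssim\|V\|_{L^{\frac n4,\infty}}\|u\|_{L^{2^\sharp}}^2$. That inequality is false for general weak-$L^{\frac n4}$ potentials, e.g.\ $V=|x|^{-4}$. Use ordinary H\"older with $V\in L^{\frac n4}$ from Assumption \ref{as:equ} instead.
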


\begin{proof}
By time reversal symmetry, it suffices to prove the statement for $u^{+}$.
By Proposition \ref{prepare main thm}, the $Z$-norm of $u$ on $\mathbb{R}_{+}$ is finite. Together with Proposition \ref{criterion}, this implies that the $W(\mathbb{R}_{+})$-norm of $u$ is finite.

Define
\[
v(t) = e^{-itH}u(t), \qquad t > 0.
\]
It suffices to show that $v(t)$ converges in $H^{2}$ as $t \to \infty$.
By Duhamel's formula,
\begin{equation}\label{es:10-prop1-1}
v(t_{1}) - v(t_{2}) = i\lambda \int_{t_{1}}^{t_{2}} e^{-isH}|u(s)|^{\frac{8}{n-4}}u(s)ds.
\end{equation}
By \eqref{strichartz for scattering},
\[
 \| \int_{\mathbb{R}}e^{-isH}|u(s)|^{\frac{8}{n-4}}u(s)ds \|_{\dot{H}^{2}}
 \leq C\||u|^{\frac{8}{n-4}}u \|_{N(\mathbb{R})}
 \leq C\|u\|^{\frac{n+4}{n-4}}_{W(\mathbb{R})} < \infty.
\]
Hence the right-hand side of \eqref{es:10-prop1-1} tends to $0$ in $\dot{H}^{2}$ as $t_{1}, t_{2} \to \infty$.
Also, by Proposition \ref{criterion}, $u\in L^{\frac{2(n+4)}{n}}(\mathbb{R}_{+}\times\mathbb{R}^{n})$, and therefore \eqref{standard} yields
\[
 \| \int_{\mathbb{R}}e^{-isH}|u(s)|^{\frac{8}{n-4}}u(s)ds \|_{L^{2}}
 \leq C\|u\|_{Z(\mathbb{R})}^{\frac{8}{n-4}}\|u\|_{L^{\frac{2(n+4)}{n}}(\mathbb{R}\times\mathbb{R}^{n})} < \infty.
\]
Similarly, the right-hand side of \eqref{es:10-prop1-1} tends to $0$ in $L^{2}$ as $t_{1}, t_{2}\to\infty$.
Therefore $v(t)$ is Cauchy in $H^{2}$, so there exists $u^{+} \in H^{2}$ such that \eqref{conclusion of prop10-1} holds.
Moreover,
\[
 u^{+}  = u_{0} + i\lambda \int_{0}^{\infty} e^{-isH}|u(s)|^{\frac{8}{n-4}}u(s)ds.
\]

We next prove the continuity of the map $u_{0} \mapsto u^{+}$. Continuity in $\dot{H}^{2}$ follows from Proposition \ref{prop:long time} in the case $e=0$. Continuity in $L^{2}$ follows, as in Proposition \ref{criterion}, from the boundedness of the $Z$-norm in Proposition \ref{prepare main thm} together with \eqref{standard}.

We now prove \eqref{conclusion of prop10-2}. The first identity follows from conservation of mass and the fact that $v(t)$ converges in $L^{2}$ as $t\to\infty$.
For the second identity, since $u \in Z(\mathbb{R}_{+})$, there exists a sequence $t_{k}$ such that
\[
\|u(t_{k})\|_{L^{\frac{2(n+4)}{n-4}}} \rightarrow 0 \qquad (t_{k} \rightarrow \infty).
\]
Together with conservation of mass, this implies
\[
 \|u(t_{k})\|_{L^{2^{\sharp}}}^{2^{\sharp}}
 \leq \|u(t_{k})\|_{L^{\frac{2(n+4)}{n-4}}}^{\frac{n+4}{n-4}}\|u_{0}\|_{L^{2}} \rightarrow 0,\qquad t_{k} \rightarrow \infty.
\]
Also,
\[
 \int_{\mathbb{R}^{n}} V(x) |u(x)|^{2} dx \leq \|V\|_{L^{\frac{n}{4}, \infty}}\|u(t_{k})\|_{L^{2^{\sharp}}}^{2} \rightarrow 0,\qquad t_{k} \rightarrow \infty.
\]
Hence, writing $w(t) = e^{itH}u^{+}$,
\begin{align*}
2E(u_{0})
&= 2E(u(t_{k})) \\
&= \|u(t_{k})\|^{2}_{\dot{H}^{2}} + o(1) \\
&= \|w(t_{k})\|^{2}_{\dot{H}^{2}} + o(1) \\
&= \|e^{it_{k}\Delta^{2}}u_{*}^{+}\|^{2}_{\dot{H}^{2}} + o(1).
\end{align*}
Letting $k \to \infty$, we obtain \eqref{conclusion of prop10-2}.
\end{proof}

\begin{proof}[Proof of Theorem \ref{th:scattering}]
Under Assumptions \ref{as:wave} and \ref{as:equ}, note that $P_{ac}(H)$ is the identity.
Therefore, by Proposition \ref{scattering-1}, Proposition \ref{scattering-2}, and Remark \ref{remark:scattering}, there exist $u^{\pm}, \tilde{u}^{\pm} \in H^{2}_{rad}$ such that as $t \to \pm \infty$,
\begin{align*}
 \|u(t) - e^{itH}\tilde{u}^{\pm}\|_{H^{2}} &\rightarrow 0, \\
 \|e^{itH}\tilde{u}^{\pm} - e^{it\Delta^{2}}u^{\pm}\|_{H^{2}} &\rightarrow 0.
\end{align*}
Hence
\[
 \|u(t) - e^{it\Delta^{2}}u^{\pm} \|_{H^{2}}
 \leq  \|u(t) - e^{itH}\tilde{u}^{\pm}\|_{H^{2}} + \|e^{itH}\tilde{u}^{\pm} - e^{it\Delta^{2}}u^{\pm}\|_{H^{2}}
 \rightarrow 0.
\]
\end{proof}

\begin{proposition}
Let $V$ be a radial real-valued potential satisfying Assumptions \ref{as:wave}, \ref{as:mora}, and \ref{as:equ}. Let $n \geq 5$.
For any $u^{+} \in H^{2}_{rad}$ (resp. $u^{-} \in H^{2}_{rad}$), there exists a solution $u \in C(\mathbb{R}, H^{2}_{rad})$ to \eqref{eq:NL4S} with $\lambda > 0$ and $p = 2^{\sharp} - 1$ such that \eqref{conclusion of prop10-1} holds.
In particular, $S_{\pm}$ are homeomorphisms on $H^{2}_{rad}$.
\end{proposition}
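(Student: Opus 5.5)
The plan is to construct the wave operator for the nonlinear problem by solving the final-value problem at $t=+\infty$. The case $u^{-}$ follows by time reversal. Given $u^{+}\in H^{2}_{rad}$, we look for $u$ satisfying
\[
u(t) = e^{itH}u^{+} - i\lambda\int_{t}^{\infty} e^{i(t-s)H}|u(s)|^{\frac{8}{n-4}}u(s)\,ds .
\]

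\textbf{Step 1 (solve near $+\infty$).} By \eqref{strichartz} we have $\|e^{itH}u^{+}\|_{W(\mathbb{R})}\lesssim\|u^{+}\|_{\dot H^{2}}<\infty$. Hence there is $T$ with $\|e^{itH}u^{+}\|_{W([T,\infty))}<\delta$. On $[T,\infty)$ I run the contraction argument of Proposition \ref{local existence}, with the Duhamel integral taken over $[t,\infty)$. The Strichartz bounds \eqref{standard} and \eqref{strichartz} apply verbatim to the retarded integral, since they come from the same $TT^{*}$ estimates. Using the space $X_{M,\delta}$, and adding control of $L^{\infty}_{t}L^{2}$ and $L^{\infty}_{t}\dot H^{2}$, gives a solution $u\in C([T,\infty),H^{2})$ with $\|u\|_{W([T,\infty))}\le2\delta$. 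By uniqueness of the fixed point and the radial symmetry of $H$ (the potential $V$ is radial), $u$ is radial.

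\textbf{Step 2 (convergence to $e^{itH}u^{+}$).} From the integral equation, \eqref{strichartz for scattering} and \eqref{standard} give
\[
\|e^{-itH}u(t)-u^{+}\|_{H^{2}}\lesssim \|u\|_{W([t,\infty))}^{\frac{n+4}{n-4}}+\|u\|_{Z([t,\infty))}^{\frac{8}{n-4}}\|u\|_{L^{\frac{2(n+4)}{n}}([t,\infty)\times\mathbb{R}^{n})}\to0 .
\]
Since $e^{itH}$ is unitary on $L^{2}$ and bounded uniformly on $H^{2}$ by Proposition \ref{prop:inh:sobolev}, this yields \eqref{conclusion of prop10-1}.

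\textbf{Step 3 (global extension and the homeomorphism claim).} The solution $u(T)$ is radial in $H^{2}$. Theorem \ref{th:global} applied backward in time (time reversal preserves the assumptions) extends $u$ to $C(\mathbb{R},H^{2}_{rad})$. Then $S_{+}(u(0))=u^{+}$ by the uniqueness part of Proposition \ref{scattering-2}, so $S_{+}$ is surjective. For injectivity, suppose two solutions have the same $u^{+}$. Both have finite $W$-norm on $[T,\infty)$ by Proposition \ref{scattering-2}, and both solve the same final-value integral equation, so they coincide on a neighbourhood of $+\infty$ by the contraction. By uniqueness in $H^{2}$ they then agree everywhere. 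Continuity of $S_{+}$ is already known from Proposition \ref{scattering-2}.

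It remains to show continuity of $S_{+}^{-1}$. If $u_{k}^{+}\to u^{+}$ in $H^{2}$, then $\|e^{itH}(u_{k}^{+}-u^{+})\|_{W(\mathbb{R})}\to0$, so on $[T,\infty)$ the fixed points converge in $C_{t}H^{2}$ by the contraction estimates. Then $u_{k}(T)\to u(T)$, and Proposition \ref{prop:long time} on $[0,T]$ (the Z-norm is bounded by Proposition \ref{prepare main thm}), together with the $L^{2}$ argument via \eqref{standard}, gives $u_{k}(0)\to u(0)$.

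The main obstacle I expect is Step 3's continuity of the inverse. One has to make sure $T$ can be chosen uniformly for all $u_{k}^{+}$ near $u^{+}$ (this holds since $W$-smallness is stable under $H^{2}$-small perturbation). One also has to control the $L^{2}$ part of the difference, where Proposition \ref{prop:long time} only gives $\dot H^{2}$ information; this requires subdividing $[0,T]$ into intervals of small $Z$-norm as in Proposition \ref{criterion}.
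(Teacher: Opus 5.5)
Your proposal follows the paper's route: solve the final-value integral equation by contraction on $[T_{\delta},\infty)$ where $\|e^{itH}u^{+}\|_{W}$ is small, extend the solution to $\mathbb{R}$ with Theorem \ref{th:global}, and obtain \eqref{conclusion of prop10-1} as in Proposition \ref{scattering-2}. Your treatment of injectivity and of the continuity of $S_{+}^{-1}$ is more explicit than the paper's one-line ``proved similarly''. One point needs tightening. For $n>12$, the $\dot H^{2}$-convergence of the fixed points on $[T,\infty)$ does not follow from the bare contraction estimates, which are Lipschitz only in the $L^{2}$-level metric while $f_{z}$ is merely H\"older. There you should instead use the exotic-Strichartz stability argument of Proposition \ref{Short time perturbation}, for example applied backward from a large time $T'$ with bounds uniform in $T'$ and $k$.
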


\begin{proof}
By time reversal symmetry, it suffices to prove the statement for $u^{+}$.
Let $w(t) = e^{itH}u^{+}$. Then \eqref{strichartz for scattering} implies that $w \in W(\mathbb{R})$.
Hence, for any $\delta > 0$, there exists $T_{\delta}$ such that
\[
\|w\|_{W([T_{\delta}, \infty))} < \delta.
\]
For any $u \in W([T_{\delta}, \infty))$, define
\begin{equation}\label{prop10-4:1}
\Phi (u)(t) = w(t) -i \lambda \int_{t}^{\infty} e^{i(t- s)H}|u(s)|^{\frac{8}{n-4}}u(s) ds.
\end{equation}
Consider the complete metric space
\begin{align*}
X_{T_{\delta}} = &\left\{ u \in W([T_{\delta}, \infty)) \cap L^{\frac{2(n+4)}{n}}([T_{\delta}, \infty), L^{\frac{2(n+4)}{n}}) ; \ \|u\|_{W([T_{\delta}, \infty))} \leq C\delta,\right. \\
&\|u\|_{L^{\frac{2(n+4)}{n}}([T_{\delta}, \infty), L^{\frac{2(n+4)}{n}})}\left. \leq C\|u^{+}\|_{L^{2}}\right\},
\end{align*}
equipped with the metric induced by
$L^{\frac{2(n+4)}{n}}([T_{\delta}, \infty), L^{\frac{2(n+4)}{n}})$.
Then $\Phi$ is a contraction on $X_{T_{\delta}}$, so there exists $u$ such that $\Phi(u)=u$.

By the Strichartz estimate, $u \in C([T_{\delta}, +\infty), H^{2}) \cap M([T_{\delta}, +\infty))$.
Moreover,
\[
 u(T_{\delta} + t) = e^{itH}u(T_{\delta}) + i\lambda\int_{T_{\delta}}^{T_{\delta} + t} e^{i(t + T_{\delta} - s)}|u(s)|^{\frac{8}{n-4}}u(s) ds,
\]
so $u$ is a solution of \eqref{eq:NL4S} with $p = 2^{\sharp} - 1$ on $[T_{\delta}, +\infty)$.
Therefore, by Theorem \ref{th:global}, $u$ extends to all of $\mathbb{R}$.
From \eqref{prop10-4:1}, one proves \eqref{conclusion of prop10-1} in the same way as in Proposition \ref{scattering-2}. The continuity of $S_{\pm}$ is proved similarly.
\end{proof}

\appendix
\section{Appendix}

\begin{app}[Abstract bootstrap principle,\ \cite{Tao}]\label{bootstrap}
Let $I$ be a time interval, and suppose that for each $t \in I$ there are two statements: a ``hypothesis'' $\textbf{H}(t)$ and a ``conclusion'' $\textbf{C}(t)$.
Assume that the following four properties hold.
\begin{enumerate}
\item[(a)] If $\textbf{H}(t)$ holds for some $t \in I$, then $\textbf{C}(t)$ also holds for that $t$.
\item[(b)] If $\textbf{C}(t)$ holds for some $t \in I$, then $\textbf{H}(t')$ holds for all $t'$ in a neighborhood of $t$.
\item[(d)] If a sequence $t_{n} \in I$ converges to $t \in I$ and $\textbf{C}(t_{n})$ holds for all $n$, then $\textbf{C}(t)$ also holds.
\item[(e)] $\textbf{H}(t)$ holds for at least one point $t \in I$.
\end{enumerate}
Then $\textbf{C}(t)$ holds for every $t \in I$.
\end{app}

\begin{app}[Product rule,\ \cite{Christ}]\label{Product rule}
Let $s \in (0, 1]$, and let $1 < r, p_{1}, p_{2}, q_{1}, q_{2} < \infty$ satisfy
\[
\frac{1}{r} = \frac{1}{p_{i}} + \frac{1}{q_{i}},\qquad (i = 1, 2).
\]
Then
\[
 \||\nabla|^{s}(fg)\|_{L^{r}} \lesssim \|f\|_{L^{p_{1}}} \||\nabla|^{s}g\|_{L^{q_{1}}} + \||\nabla|^{s}f\|_{L^{p_{2}}}\|g\|_{L^{q_{2}}}
\]
holds.
\end{app}

\begin{app}[Fractional chain rule for a H\"older continuous function,\ \cite{visan}]\label{chain}
Let $G$ be a H\"older continuous function of exponent $0 < \alpha < 1$.
Then, for any $0 < s < \alpha$, $1 < p < \infty$, and $\frac{s}{\alpha} < \sigma < 1$,
\[
 \||\nabla|^{s}G(u)\|_{L^{p}} \lesssim \||u|^{\alpha - \frac{s}{\sigma}}\|_{L^{p_{1}}}\||\nabla|^{\sigma}u\|_{L^{\frac{s}{\sigma}p_{2}}}^{\frac{s}{\sigma}}
\]
holds, where
\[
\frac{1}{p} = \frac{1}{p_{1}} + \frac{1}{p_{2}},\qquad \left(1 - \frac{s}{\alpha \sigma}\right)p_{1} > 1.
\]
\end{app}

\section*{Acknowledgements}
This paper is based on the author's master's thesis completed at Osaka University. I would like to express my sincere gratitude to Associate Professor Haruya Mizutani for his careful guidance and support.

\end{document}